\spnewtheorem*{AAT}{Abel's Addition Theorem}{\bfseries}{\itshape}
\patchcmd{\qed}{\ifmmode\qedsymbol}{\ifmmode\the\qedsymbol}{}{\foobar}
\patchcmd{\qed}{\hfil\qedsymbol}{\hfil\the\qedsymbol}{}{\foobar}
\newcommand{\qedhere}{\tag*{\the\qedsymbol}}
\definecolor{labelkey}{rgb}{0,0,1}
\newcommand{\sect}[1]{Section~\ref{#1}}
\newcommand{\R}{\ensuremath{\mathbb{R}}}   
\newcommand{\T}{\ensuremath{\mathbb{T}}}   
\def \d {\delta}
\def \n {\nabla}
\def \O {\Omega}
\def \bd {{\bf d}}
\def \bu {{\bf u}}
\def \cA {\mathcal{A}}
\def \cE {\mathcal{E}}
\def \cH {\mathcal{H}}
\def \cL {\mathcal{L}}
\newcommand{\ave}[1]{ \left[ #1 \right]}
\DeclareMathOperator{\supp}{supp} %
\DeclareMathOperator{\diam}{diam} %
\def \lan {\langle}
\def \ran {\rangle}
\renewcommand{\geq}{\geqslant}
\renewcommand{\leq}{\leqslant}
\newcommand{\cmark}{\ding{51}}%
\newcommand{\xmark}{\ding{55}}%
\def \dx  {\, \mbox{d}x}
\def \dt  {\, \mbox{d}t}
\def \dy  {\, \mbox{d}y}
\def \dr  {\, \mbox{d}r}
\def \ds  {\, \mbox{d}s}
\def \ddt  {\frac{\mbox{d\,\,}}{\mbox{d}t}}
\def\R{\mathbb{R}}
\def\T{\mathbb{T}}
\def\Hmdot{\dot{H}^m}
\def\Hldot{\dot{H}^l}
\def\Hkdot{\dot{H}^k}
\def\u{\textbf{u}}
\def\uavg{\u_F}
\def\dl{\partial^l}
\def\bd{\textbf{d}}
\def\V{\textbf{V}}
\def \st {\mathrm{s}}
\def \wt {\mathrm{w}}
\begin{document}

\title*{Well-posedness and long time behavior of the Euler Alignment System with adaptive communication strength}

%
%
\author{Roman Shvydkoy \and Trevor Teolis}
\institute{
  R.~Shvydkoy \at Department of Mathematics, Statistics and Computer Science, University of
  Illinois at Chicago, 851 S Morgan St, M/C 249, Chicago, IL 60607
  \email{shvydkoy@uic.edu} \and %
  T.~Teolis \at Department of Mathematics, Statistics and Computer Science, University of
  Illinois at Chicago, 851 S Morgan St, M/C 249, Chicago, IL 60607, 
  \email{tteoli2@uic.edu}
}

\maketitle

\thanks{\textbf{Acknowledgment.}  
	This work was  supported in part by NSF
	grant  DMS-2107956.}

\begin{abstract} {
We study a new flocking model which has the versatility to capture the physically realistic qualitative behavior of the Motsch-Tadmor model, while 
also retaining the entropy law, which lends to a similar 1D global well-posedness analysis to the Cucker-Smale model.  
This is an improvement to the situation in the Cucker-Smale case, which may display the physically unrealistic behavior that large flocks overpower the dynamics of small, far away flocks; 
and it is an improvement in the situation in the Motsch-Tadmor case, where 1D global well-posedness is not known.
The new model was proposed in \cite{shvydkoy2022environmental} and has a similar structure to the Cucker-Smale and Motsch-Tadmor hydrodynamic systems, but 
with a new feature: the communication strength is not fixed, but evolves in time according 
to its own transport equation along the Favre-filtered velocity field.  This transport of the communication strength is precisely what preserves the entropy law. 
A variety of phenomenological behavior can be obtained from various choices of the initial communication strength, including the aforementioned Motsch-Tadmor-like behavior. 
We develop the general well-posedness theory for the new model and study the long time behavior-- including alignment, strong flocking in 1D, and entropy estimates to 
estimate the distribution of the limiting flock, all of which extend the classical results of the Cucker-Smale case. 
In addition, we provide numerical evidence to show the similar qualitative behavior
between the new model and the Motsch-Tadmor model for a particular choice of the initial communication strength.   
}

\keywords{collective behavior, alignment,  Cucker-Smale model, Motsch-Tadmor model, environmental averaging}
 
\subclassname {37A60, 92D50}

\date{\today}

\end{abstract}

\tableofcontents

\section{Introduction}
\subsection{Brief background and motivation}
The pressureless Euler Alignment system based on the classical Cucker-Smale model is given by 
\begin{align}
    \label{EAS_CS}
    \tag{CS}
    \begin{cases}
        \partial_t \rho + \nabla \cdot (\u \rho) = 0 \\
        \partial_t \u + \u \cdot \nabla \u = (\u\rho)_{\phi} - \u\rho_{\phi}
    \end{cases}
\end{align}
where we use the shorthand notation $f_{\phi} := f \ast \phi$ for convolutions, see \cite{CS2007a,HT2008}. Here, $\phi\in C^1$ is a smooth non-negative radially decreasing communication kernel, 
$\rho, \u$ are density and velocity of the flock, respectively, and the environment in question is either $\O = \T^n$ or $\R^n$ 
(although our results for the $\st$-model will be stated only for the Torus $\T^n$).

Analysis and relevance to applications of \eqref{EAS_CS} has been the subject of many studies in recent years, see \cite{ABFHKPPS,FK2019,HeT2017,Sbook,TT2014} and references therein. In particular, flocking in the classical sense of uniformly bounded radius and exponential alignment
\begin{equation}\label{e:flocking}
	\sup_{t\geq 0} (\diam(\supp \rho )) < \infty , \quad	\sup_{x \in\supp \rho} |\u(t,x) - \u_\infty|\leq C_0 e^{- \d t}
\end{equation}
holds under ``heavy-tail" condition on the kernel, \cite{TT2014}
\begin{equation}\label{heavy_tail}
\int_0^\infty \phi(r) \dr = \infty,
\end{equation}
by direct analogy with the agent-based result of Cucker and Smale \cite{CS2007a,CS2007b,HL2009}. Here, the limiting velocity $\u_\infty$ is determined by the initial momentum, which is conserved.

The alignment force in the system is mildly diffusive as seen for instance from the energy balance law
\begin{equation}\label{e:enlaw}
\ddt \frac{1}{2} \int_\O \rho |\u|^2 \dx = -  \int_{\O \times \O} \phi(x-y) |\u(x) - \u(y)|^2 \rho(x)\rho(y) \dy \dx.
\end{equation}
Therefore the regularity theory for \eqref{EAS_CS} in the smooth communication case runs somewhat parallel to hyperbolic conservation laws; the difference being that 
there is room for regularization effect in the force. For instance, in 1D, Carrillo, Choi, Tadmor and Tan \cite{CCTT2016} establish an exact threshold regularity criterion in terms of the so called ``e-quantity"
\begin{equation}\label{e:e}
e = \partial_x u + \rho_\phi, \qquad \partial_t e + \partial_x (u e) = 0.
\end{equation}
The solution with smooth initial condition remains smooth if and only if $e_0 \geq 0$. In multi-D, partial results are found in \cite{HKK2014,HKK2015,HeT2017,TT2014} and the book \cite{Sbook} presents a general continuation criterion in the spirit of Grassin \cite{Grassin99}, Poupaud \cite{Poupaud99}: as long as 
	\begin{equation}\label{e:BKMdiv}
\int_0^{T_0} \inf_{x\in \O} \n \cdot \bu(t,x) \dt > - \infty
\end{equation}
the solution can be continued smoothly beyond $T_0$. 

Phenomenologically the Euler alignment system performs well when the flock is mono-scale. 
For instance, in the Darwin mission, a constellation of satellites are coordinated to remain equidistant from one another (i.e. mono-scale).  
It is shown in \cite{Darwin} that the Cucker-Smale dynamics can used as a control law for the satellites in order to maintain this formation. 
However, in heterogeneous formations, when two remote clusters of largely diverse size appear, the dynamics according to \eqref{EAS_CS} yields pathological results. 
The large cluster hijacks evolution of the smaller cluster removing any fine features of the latter. 
Motsch and Tadmor argue in \cite{MT2011,MT2014} that rebalancing the averaging operation in the alignment force cures such issues. 
They proposed the following modification
\begin{align}
    \label{EAS_MT}
    \tag{MT}
    \begin{cases}
        \partial_t \rho + \nabla \cdot (\u \rho) = 0 \\
        \partial_t \u + \u \cdot \nabla \u = \frac{1}{\rho_{\phi}} \big( (\u\rho)_{\phi} - \u\rho_{\phi} \big).
    \end{cases}
\end{align}
The model has the exact same flocking behavior \eqref{e:flocking} under heavy-tail kernel, but progress in well-posedness theory of the system has been stalled even in 1D due to the lack of the energy law \eqref{e:enlaw} or the e-quantity \eqref{e:e}. 
Therefore the need for a model with qualitative features similar to those of Motsch-Tadmor but better analytical properties has become a pressing problem.

A model with the potential to achieve these features has been proposed in \cite{shvydkoy2022environmental} in the context of Environmental Averaging models, but it 
has not received any scrutiny there.  The goal of this paper is to show that the proposed model, which we call the adaptive strength model, or $\st$-model for short, does indeed possess the desirable qualitative and analytic properties.  We describe the $\st$-model next. 

\subsection{Environmental Averaging Models and the $\st$-model}

Despite their differences both systems \eqref{EAS_CS} and  \eqref{EAS_MT} share similar structure of the alignment force. It can be written as 
\begin{equation}\label{e:F}
F = \st_\rho ( \ave{\u}_\rho - \u),
\end{equation}
where $\ave{\u}_\rho$ is the averaging component, and $\st_\rho \geq 0$ is the communication strength. In both cases $\ave{\u}_\rho= \frac{(\u \rho)_\phi}{\rho_\phi}$, which  is also known in turbulence literature as the Favre filtration, see \cite{Favre}. 
The difference comes only in the prescription of the communication strength. 
In the Cucker-Smale case $\st_\rho = \rho_\phi$, while in the Motsch-Tadmor case $\st_\rho \equiv 1$. Many other examples encountered in the literature, including multi-flocks and multi-species, share the same structure and fall under the category of so called `environmental averaging models'. The general theory of such models has been developed in \cite{shvydkoy2022environmental}. 
The alignment characteristics and well-posedness are determined by 
a strength function $\st_{\rho}$ and the weighted averaging operator $\st_\rho [\cdot]_{\rho}$. 

It is observed  in  \cite{shvydkoy2022environmental} that the main reason why the e-equation \eqref{e:e} holds in the Cucker-Smale case is because, for this model, the strength function $\rho_{\phi}$ happens to evolve according to its own transport equation along the Favre-averaged field:
\begin{equation}\label{ }
\partial_t \rho_\phi + \partial_x ( \rho_\phi \ave{\u}_\rho ) = 0.
\end{equation}
So, a new model was proposed where instead of prescribing communication strength $\st_\rho$ a priori, one lets it adapt to the environment through transport along the averaged field
\begin{equation}\label{ }
\partial_t \st + \partial_x ( \st \ave{\u}_\rho) = 0, \hspace{5mm} \st \geq 0 
\end{equation}
As such, the adaptive strength $\st$ becomes another unknown, and it may not explicitly depend on the density. The resulting full model, which we call $\st$-model for short, reads
\begin{equation}
    \label{s_model}
    \tag{SM}
    \begin{cases}
        \partial_t \rho + \nabla \cdot (\u\rho) = 0 \\
        \partial_t \st + \nabla \cdot (\st \ave{\u}_{\rho}) = 0,  \hspace{15mm} \st \geq 0   \\
        \partial_t \u + \u \cdot \nabla \u = \st (\ave{\u}_{\rho} - \u),
    \end{cases}
\end{equation}
Now, regardless of the particular averaging used, the model always admits a conserved 
quantity in 1D similar to \eqref{e:e}, which lands it more amenable well-posedness analysis.   
Indeed, if we differentiate the velocity equation in $x$, we get 
\begin{equation*}
    \partial_t \partial_x u + \partial_x u (\partial_x u + \st) + u(\partial_x^2 u + \partial_x \st) = \partial_x (\st [u]_{\rho}) .
\end{equation*} 
If $\st$ satisfies the transport equation in \eqref{s_model}, then it becomes 
\begin{equation*}
    \partial_t (\partial_x u + \st) + \partial_x u (\partial_x u + \st) + u(\partial_x^2 u + \partial_x \st) = 0
\end{equation*} 
which is the desired conservation law (a.k.a. the entropy law):  
\begin{equation}
    \label{conservation_law}
    e = \partial_x u + \st, \hspace{5mm} \partial_t e + \partial_x (ue) = 0.
\end{equation}
In the Cucker-Smale theory, this extra conservation law holds the key to 1D global well-posedeness, strong flocking, and 
entropy estimates on the limiting distribution of the flock.  We affirm in this paper that these results can be extended to the case 
of the $\st$-model (albeit with the specific case of the Favre averaging, which is the most relevant to the Cucker-Smale and Motsch-Tadmor models;
see Section \ref{intro:main_results} for further justification for working with the case of the Favre averaging). 

\begin{remark}
    The $e$-quantity is sometimes referred to as an entropy because, in 1D, it's magnitude provides a measure of distance 
    of the limiting flock from the uniform distribution.  For the Cucker-Smale model, this was proved in \cite{LS-entropy} 
    and is extended to the $\st$-model with Favre averaging here in Theorem \ref{thm:entropy_estimate_intro}. 
\end{remark}

We note that any attempt to develop well-posedness and alignment analysis of \eqref{s_model} for most general averaging operators $[\u]_{\rho}$ necessitates many technical assumptions and therefore leads to an obscure exposition. 
So, to avoid such technicalities and to keep our analysis close to the Cucker-Smale and Motsch-Tadmor cases, we limit ourselves to the Favre-based models, setting $\u_F := (\u \rho)_{\phi} / \rho_{\phi}$ to be our fixed averaging protocol.
With regard to the local and 1D global well-posedness results, the choice of $\u_F$ is merely convenient and the results can be extended to general averaging protocols $[\u]_{\rho}$. 
Notably, however, the small data and long-time behavior results depend on the explicit structure of $\u_F$ and therefore these results may not be extendable to general averaging protocols. 
Fortunately, choosing the specific averaging $\u_F$ over general averaging operators is a small sacrifice.
Indeed, even with the $\st$-model with the specific Favre averaging has versatility to capture the Motsch-Tadmor-like behavior while retaining the nice analytic properties of the 
Cucker-Smale model (owing to the conservation law \eqref{conservation_law}).
We will from here on refer to the $\st$-model with Favre averaging as just the '$\st$-model', unless it is stated otherwise. 

To rewrite the $\st$-model in a simpler form and more explicit form,  
we introduce the new variable 
\[
\wt := \frac{\st}{\rho_{\phi}}.
\]
  As $\rho_{\phi}$ and $\st$ satisfy the 
same continuity equation, $\wt$ satisfies the pure transport equation along the characteristics of $\u_F$
\begin{equation*}
    \partial_t \wt + \u_F \cdot \nabla \wt = 0.
\end{equation*}
 We will  refer to it as the ``weight" in order to distinguish it from the strength in the $\st$-model.  The $\st$-model can now be written in a way that eliminates division by $\rho_\phi$ in the alignment force:
\begin{align*}
    \label{EAS_WM}
    \tag{WM}
    \begin{cases}
        \partial_t \rho + \nabla \cdot (\u\rho) = 0 \\
        \partial_t \wt + \u_F \cdot \nabla \wt = 0 \\
        \partial_t \u + \u \cdot \nabla \u  = \wt ((\u \rho)_{\phi} - \u \rho_{\phi}).
    \end{cases}
\end{align*}

Setting $\wt =1$ we recover the Cucker-Smale case \eqref{EAS_CS}, while setting $\wt_0 = 1/({\rho_0})_{\phi}$, at least initially we recover the Motsch-Tadmor data. In the latter case, as the strength evolves, it will deviate from the Motsch-Tadmor strength. 
The question arises as to whether this new strength still retains the same balancing properties as the original Motsch-Tadmor model. In Section \ref{description_of_numerics} we present numerical evidence that it is indeed the case -- 
small flocks are not overly influenced by large far away flocks. 


\subsection{Notation}
Before stating the main results, we will describe notational conventions used throughout the paper.
We will use $\partial^k_x$ and $\partial^k_t$ to denote the $k^{th}$ partial derivative with respect to time and space, respectively. 
In multi-D, we will at times use $\partial$ to denote an arbitrary spatial derivative (i.e. the partial derivative in an arbitrary 
coordinate direction).  Since the kernel $\phi$ is a radial function, we will denote the derivatives by $\phi'$, $\phi''$, etc. 
We will use $c$'s to denote lower bounds.  For instance, $c_0$, $c_1$, and $c_2$, will refer to lower bounds on $\rho$, $\phi$, 
and $e$, respectively. To abbreviate maximum and minimum values of a function, we write $f_+ := \sup_{x \in \T^n} f(x)$ and $f_- = \inf_{x \in \T^n} f(x)$.
As mentioned in the introduction, we abbreviate convolutions by $f_{\phi} := f \ast \phi$.
Regarding function spaces, $H^m := H^m(\T^n)$ is the Sobolev space of functions whose first $m$ derivatives (defined in the weak sense) lie in $L^2(\T^n)$.  
We will denote by $L^1_+$ the space of non-negative $L^1(\T^n)$ functions.  Finally, $C_w([0,T]; X)$ denotes the space of weakly continuous functions with values in $X$ on the time interval $[0,T]$.

\subsection{The scope and main results}
\label{intro:main_results}
Let us now state the main results. 
It will be more convenient to develop regularity theory for the $\st$-model in the form \eqref{EAS_WM}, treating $\wt$ as an unknown.   
In section \ref{LWP}, local existence and continuation is proved in higher regularity Sobolev classes via a Banach Fixed Point argument 
for a viscous regularization of \eqref{EAS_WM}; the full result is then obtained by compactness arguments.  
Energy estimates are also established that give rise to to the continuation criterion.  
The assumptions (A1)-(A5) required for the local existence are stated below in Theorem \ref{lwp} and will be used throughout the paper. 
We will indicate, if possible, how our results can be extended to the open space $\R^n$.
\begin{theorem} [Local well-posedness]  \label{lwp}
    Suppose the following assumptions hold. 
    \begin{enumerate} [leftmargin=0.8cm, label=(A\arabic*)]
        \item The domain is the torus, $\Omega = \T^n$ 
        \item The kernel $\phi$ is a smooth, non-negative, radial, and decreasing function of the distance 
        \item The density and weight are non-negative functions (i.e. $\rho, \wt \geq 0$)
        \item The initial data $(\rho_0, \wt_0, \u_0) \in (H^k \cap L_+^1) \times H^l \times H^m$ with $l \geq m \geq k+1 \geq n/2 + 3$
        \item There is a constant $c_0$ such that $(\rho_0)_{\phi} \geq c_0 > 0$
    \end{enumerate}
    Then there exists a time $T > 0$ and a unique solution $(\rho, \wt, \u) \in C_w([0,T]; (H^k \cap L_+^1) \times H^l \times H^m)$
    to \eqref{EAS_WM} satisfying the initial condition and $\inf_{[0,T]} \rho_{\phi} >0$.
    
Moreover, if 
\begin{equation}
    \label{cont_criterion}
    \int_0^{T}\left( \|\nabla \u\|_{\infty} + \Big\|\frac{1}{\rho_{\phi}} \Big\|_{\infty}^{l+1} \right) \ds < \infty
\end{equation}
then the solution can be continued beyond time $T$.
\end{theorem}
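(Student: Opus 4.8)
The plan is to follow the standard parabolic-regularization scheme for quasilinear transport systems. First I would add an artificial viscosity $\mu>0$ to each equation of \eqref{EAS_WM}, replacing the right-hand sides by the same expressions plus $\mu\Delta\rho$, $\mu\Delta\wt$, and $\mu\Delta\u$, respectively. The parabolic smoothing makes the regularized system amenable to a contraction mapping argument: holding a reference velocity $\bar\u$ fixed, one solves the linear continuity--diffusion equation for $\rho$, forms the Favre field $\u_F=(\u\rho)_\phi/\rho_\phi$, solves the linear transport--diffusion equation for $\wt$, and finally solves the linear forced diffusion equation for $\u$. Writing each equation in mild form with the heat semigroup $e^{\mu t\Delta}$ and using the embedding $H^m\hookrightarrow W^{1,\infty}$ (guaranteed by $m\geq n/2+3$), I would show this map is a contraction on a small ball in $C([0,T_\mu];H^k\times H^l\times H^m)$, producing a unique regularized solution for each $\mu$. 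Non-negativity of $\rho$ and $\wt$ and conservation of mass follow from the maximum principle for the continuity--diffusion equation; the lower bound $\rho_\phi\geq c_0$ of (A5) is propagated over a short time interval by controlling $\partial_t\rho_\phi$.

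The crux is the derivation of energy estimates in $H^k\times H^l\times H^m$ that are uniform in $\mu$ on a time interval depending only on the data. Setting $Y(t)=\|\rho\|_{H^k}^2+\|\wt\|_{H^l}^2+\|\u\|_{H^m}^2$, I would apply $\partial^k$, $\partial^l$, and $\partial^m$ to the three equations respectively, pair with the corresponding top-order derivative, and invoke the Kato--Ponce commutator and product estimates; the viscous terms contribute good-sign dissipation that is simply discarded. The transport structure of the $\u$-equation produces the benign factor $\|\nabla\u\|_\infty Y$ after commuting $\u\cdot\nabla$, while the forcing $\wt((\u\rho)_\phi-\u\rho_\phi)$ is estimated in $H^m$ using that $H^m$ is an algebra together with $l\geq m$ (so the weight is regular enough to multiply into the force) and the smoothing of convolution against $\phi$. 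Controlling the density transport $\u\cdot\nabla\rho$ and the term $\rho\,\nabla\cdot\u$ in $H^k$ forces $\u\in H^{k+1}$, which is exactly the requirement $m\geq k+1$. The delicate term is the transport of $\wt\in H^l$ along $\u_F$: estimating $\|\u_F\|_{H^l}$ through the quotient rule distributes up to $l$ derivatives onto the denominator $\rho_\phi$, each costing a factor $1/\rho_\phi$, plus one further factor from the undifferentiated quotient, so the worst contribution is $\|1/\rho_\phi\|_\infty^{l+1}$ --- precisely the power appearing in \eqref{cont_criterion}. These estimates close into a differential inequality $\dot Y\leq P(Y)\,(1+\|\nabla\u\|_\infty+\|1/\rho_\phi\|_\infty^{l+1})$ for a polynomial $P$, yielding a uniform bound on a short interval $[0,T]$. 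The hardest part is the careful bookkeeping of the powers of $1/\rho_\phi$ generated by the Favre field, and the verification that the regularity hierarchy $l\geq m\geq k+1\geq n/2+3$ is exactly what is needed to close every product and commutator without derivative loss; this is also the point at which the concrete structure of $\u_F$, rather than a general averaging protocol, is genuinely convenient.

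With the uniform bounds in hand I would pass to the limit $\mu\to0$ by compactness. The equations give uniform control of $\partial_t(\rho,\wt,\u)$ in lower-order Sobolev spaces, so an Aubin--Lions argument yields strong convergence of a subsequence in intermediate norms, enough to pass to the limit in the quadratic and convolution nonlinearities and recover a solution in $C_w([0,T];(H^k\cap L_+^1)\times H^l\times H^m)$ with $\inf_{[0,T]}\rho_\phi>0$. Uniqueness follows from a Gr\"onwall estimate on the difference of two solutions measured one derivative lower, where the division by $\rho_\phi$ is again controlled by the established lower bound. Finally, re-reading the differential inequality for $Y$ shows that $Y$ can blow up at time $T$ only if $\int_0^T(\|\nabla\u\|_\infty+\|1/\rho_\phi\|_\infty^{l+1})\,\ds$ diverges; hence finiteness of this integral --- condition \eqref{cont_criterion} --- permits continuation of the solution beyond $T$.
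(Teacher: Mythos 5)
Your proposal is correct and follows essentially the same route as the paper: viscous regularization, a contraction argument in mild (heat-semigroup) form on a ball where $\rho_\phi$ stays bounded below, $\epsilon$-independent energy estimates that track the power $\|1/\rho_\phi\|_\infty^{l+1}$ coming from the quotient structure of $\u_F$, Aubin--Lions compactness to remove the viscosity, and the continuation criterion read off from the closed differential inequality. The only detail you gloss over is that the raw energy estimate also involves $\|\rho\|_\infty$ and $\|\nabla\rho\|_\infty$, which the paper controls by integrating the continuity equation along characteristics (using $\|\nabla^2\u\|_\infty\lesssim\|\u\|_{H^m}$) before reducing the criterion to the two quantities in \eqref{cont_criterion}.
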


\begin{remark}
    Provided, $\phi \geq c_1 > 0$, Theorem \ref{lwp} also holds in $\R^n$. 
    The proof relies on a lower bound on $\rho_{\phi}$, which necessitates 
    a lower bound on $\phi$ when $\rho \in L^1(\R^n)$.
\end{remark}

The continuation criterion can be used to obtain a small data result.  
See Theorem \ref{small_data} for the complete statement. 
\begin{theorem} [Global well-posedness for small initial data in multi-D]
    \label{thm:small_data_intro}
    Assume (A1)-(A4). 
    If in addition, the kernel $\phi$ is bounded below, $\phi \geq c_1 > 0$ (which implies (A5)), and the initial velocity and initial variation of the velocity are small enough, 
    then there is a unique solution $(\rho, \wt, \u) \in C_w([0,T]; (H^k \cap L^1_+) \times H^l \times H^m)$ to \eqref{EAS_WM} existing globally in time.
\end{theorem}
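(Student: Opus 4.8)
The plan is to obtain a global solution from the local theory of Theorem~\ref{lwp} by verifying its continuation criterion \eqref{cont_criterion} on every finite interval: once $\|\nabla\u\|_\infty$ and $\|1/\rho_\phi\|_\infty$ are shown to stay finite up to any time $T$, the local solution extends past $T$, and global existence follows. The term involving $\rho_\phi$ is essentially free under the hypothesis $\phi \geq c_1 > 0$. Indeed, the continuity equation conserves the total mass $M = \int_{\T^n}\rho\,dx$, so
\begin{equation*}
\rho_\phi(t,x) = \int_{\T^n}\rho(t,y)\,\phi(x-y)\,dy \geq c_1 M > 0
\end{equation*}
for all $t$, giving $\|1/\rho_\phi\|_\infty \leq (c_1 M)^{-1}$ uniformly in time (and hence (A5) for free). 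Everything therefore reduces to preventing finite-time blow-up of $\|\nabla\u\|_\infty$, and the role of the smallness hypothesis on the initial velocity and its variation is precisely to preclude shock formation.

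First I would extract a uniform lower bound on the communication strength. Since $\wt$ is purely transported along $\u_F$, its extrema are conserved, $\inf\wt(t)=\inf\wt_0$; combined with $\rho_\phi \geq c_1 M$ this yields $\st = \wt\rho_\phi \geq \lambda := (\inf\wt_0)\,c_1 M$, the positive uniform strength the smallness regime is meant to preserve, together with $\st \leq (\sup\wt_0)\,\|\phi\|_\infty M$. With $\lambda>0$ in hand, a maximum-principle argument gives exponential alignment: evaluating the velocity equation at the running maximum and minimum of each component $u_i$, where the transport term drops out, and using $\phi\geq c_1$ to bound $u_i$ minus its Favre average from below by a multiple of $\big(\max_x u_i - \min_x u_i\big)$, one obtains
\begin{equation*}
\frac{d}{dt}\,\mathrm{osc}(\u) \leq -\kappa\,\mathrm{osc}(\u), \qquad \kappa := \frac{\lambda c_1}{\|\phi\|_\infty M},
\end{equation*}
so the velocity oscillation decays exponentially. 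Consequently all velocity-difference quantities, in particular $\u_F - \u$ and $\nabla\u_F$ (the latter because the constant reference part $\bar\u$ cancels in the convolution), are $O(\mathrm{osc}(\u))$ and decay at rate $\kappa$.

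The core is then a bootstrap closing on $\|\nabla\u\|_\infty$. Differentiating the velocity equation, the matrix $B=\nabla\u$ obeys the Riccati-type relation
\begin{equation*}
(\partial_t + \u\cdot\nabla)\,B = -B^2 - \st\,B + G, \qquad G = \nabla\st\otimes(\u_F - \u) + \st\,\nabla\u_F,
\end{equation*}
in which $-\st B$ is a genuine relaxation term of strength at least $\lambda$. The exponential decay of $\mathrm{osc}(\u)$ makes the transport growth of $\nabla\wt$ (which obeys $(\partial_t+\u_F\cdot\nabla)\nabla\wt=-\nabla\u_F\cdot\nabla\wt$) integrable in time, so $\|\nabla\wt\|_\infty$, and with it $\nabla\st$, stays uniformly bounded; the forcing $G$ is then of size $O(\mathrm{osc}(\u))$ and decays. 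Taking the initial velocity and $\|\nabla\u_0\|_\infty$ small enough that $B$ starts, and the forcing remains, inside the damped regime $\|B\|\lesssim\lambda$, the relaxation dominates the quadratic term and $\|\nabla\u\|_\infty$ stays bounded (indeed decays) for all time. The same integrable decay keeps the transport estimates for $\|\rho\|_{H^k}$ and $\|\wt\|_{H^l}$ bounded. Thus $\int_0^\infty\|\nabla\u\|_\infty\,ds<\infty$ while $\|1/\rho_\phi\|_\infty$ is bounded, so \eqref{cont_criterion} holds for every $T$ and the solution is global.

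The hard part will be running the estimates cleanly despite the spatially varying weight, which is exactly where the adaptive-strength model departs from Cucker-Smale. Because $\wt=\wt(x)$ is not constant, the symmetrization $x\leftrightarrow y$ that produces the sign-definite Cucker-Smale dissipation \eqref{e:enlaw} fails: the weighted energy identity $\tfrac{d}{dt}\tfrac12\int\rho\,|\u-\bar\u|^2 = \int \rho\,\wt\,\rho_\phi\,(\u-\bar\u)\cdot(\u_F - \u)$ carries a genuine error from the variation of $\wt$ that must be absorbed using the smallness and decay of the fluctuation, and the same loss of symmetry reappears at top order through commutators such as $[\partial^m,\st]\u$, forcing one to carry $\|\wt\|_{H^l}$ (equivalently $\|\st\|_{H^m}$, since $\rho_\phi$ is as smooth as $\phi$) alongside the velocity norm. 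The delicate point of the whole argument is the simultaneous closure: the alignment rate $\kappa$ must be strictly positive and the initial velocity gradient small enough that the Riccati dynamics never leaves the damped regime, while the weight's regularity accumulates only the finite, integrable amount permitted by the exponential decay.
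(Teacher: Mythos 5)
Your proposal is correct and follows essentially the same route as the paper: reduce to the continuation criterion, use $\phi\geq c_1$ to get $\rho_\phi\geq c_1M$ for free, obtain exponential $L^\infty$ alignment, bound $\|\nabla\wt\|_\infty$ uniformly via the integrable decay of $\nabla\u_F$, and close a bootstrap on the damped Riccati equation for $\nabla\u$ using smallness of $\|\u_0\|_\infty$, of the initial oscillation, and of $\|\nabla\u_0\|_\infty$ relative to the damping rate $c_1\wt_-M$. The worries in your last paragraph about the lost energy symmetry and top-order commutators are already subsumed by the continuation criterion of Theorem~\ref{lwp}, so no separate closure of $\|\wt\|_{H^l}$ is needed in this argument.
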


In 1D, having the additional conservation law \eqref{conservation_law} helps to establish control over $\partial_x u$ first, and then over decay rate of $\rho_\phi$ in order to achieve the same threshold criterion for global well-posedness as in the classical Cucker-Smale case. 
In fact, we extend this result to multi-dimensional unidirectional flocks introduced in \cite{LS-uni1}
\begin{equation}\label{e:uniintro} 
    \u(x,t) = u(x,t) \bd, \hspace{8mm} \bd \in \mathbb{S}^{n-1}, \hspace{2mm} \u: \T^n \times \R^+ \to \R.
\end{equation}
The key feature of these solutions is possession of the same conservation law \eqref{conservation_law} 
\[
e = \n u \cdot \bd + \st
\]
although in this case it does not give control over the full gradient of $\bu$. In \sect{UniGWP}, we present a bootstrap argument that establishes full control provided the weight $\wt$ is bounded above and below. 
See Theorem \ref{t:GWP_1D} and Theorem \ref{t:uni} for the complete statement of the 1D and multi-D cases, respectively.
\begin{theorem} [Global well-posedness for unidirectional flocks]
\label{thm:UniGWP}
    Assume (A1)-(A5) and that the initial density is non-vacuous, i.e. $\rho_0 \geq c > 0$. If in addition, $\u_0$ is unidirectional (i.e. of the form \eqref{e:uniintro}), 
    then there's a unique global solution provided $e_0 \geq 0$.
\end{theorem}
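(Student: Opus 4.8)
The plan is to promote the local solution of Theorem \ref{lwp} to a global one by verifying the continuation criterion \eqref{cont_criterion} on an arbitrary finite interval $[0,T]$; concretely, I must bound $\|\n\u\|_\infty$ and produce a positive lower bound for $\rho_\phi$, both on $[0,T]$. The driving mechanism is the conservation law \eqref{conservation_law}. The first step is to propagate the sign of $e$: along the flow $\dot X = \u(X,t)$ the law $\partial_t e + \n\cdot(\u e) = 0$ integrates to $e(X(t,\alpha),t)\,J(t,\alpha) = e_0(\alpha)$, where $J>0$ is the Jacobian of the flow, so $e_0 \geq 0$ forces $e \geq 0$ for as long as the solution persists. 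For unidirectional data $\u = u\bd$ the flow translates points along lines parallel to $\bd$ and freezes the transverse coordinates, so this reduces to a one-parameter family of scalar conservation laws $\partial_t e + \partial_{\bd}(ue)=0$, to which the one-dimensional computation applies line by line (here $\partial_{\bd} := \bd\cdot\n$ and $J$ is the scalar line-wise Jacobian).

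Granting $e \geq 0$, I would next extract two-sided control of the directional derivative $\partial_{\bd} u$. From $e = \partial_{\bd} u + \st$ with $\st \geq 0$ the lower bound $\partial_{\bd} u \geq -\st$ is immediate, and $\st = \wt\rho_\phi \leq \wt_+\|\phi\|_\infty M$ is bounded by a time-independent constant $C$, since the mass $M = \int\rho_0$ is conserved and $\wt_+$ is preserved by the pure transport of $\wt$. For the upper bound, $J = \exp(\int_0^t \partial_{\bd} u\,ds)$ and the lower bound just obtained give $J \geq e^{-Ct}$, hence $\partial_{\bd} u \leq e = e_0/J \leq \|e_0\|_\infty e^{Ct}$. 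In genuine 1D this is all of $\n u$. The lower bound on $\rho_\phi$ then uses the non-vacuum hypothesis $\rho_0 \geq c > 0$: along characteristics $\rho = \rho_0/J$, and the two-sided bound on $\partial_{\bd} u$ controls $J$ from above on $[0,T]$, so $\rho \geq \rho_-(T) > 0$ and therefore $\rho_\phi(x) = \int \rho(y)\phi(x-y)\,dy \geq \rho_-(T)\|\phi\|_{L^1} > 0$. This closes \eqref{cont_criterion} in 1D; uniqueness is inherited from the local theory.

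The harder, genuinely multidimensional part is the unidirectional case $n \geq 2$, where \eqref{conservation_law} controls only $\partial_{\bd} u$ and not the transverse derivatives $\partial_j u$ with $j \perp \bd$. Here I would run a bootstrap. Differentiating the velocity equation in a transverse direction and reading it along characteristics of $u\bd$ yields $\frac{d}{dt}\partial_j u = -e\,\partial_j u + \partial_j\st\,(\uF - u) + \st\,\partial_j\uF$. Because $e \geq 0$, the first term never amplifies $|\partial_j u|$, so it suffices to bound the forcing by a locally integrable function. The smoothing of $\phi$ is decisive here: $\partial_j\uF$ and $\partial_j\rho_\phi$ place the derivative on the kernel, so they are controlled by $\|u\rho\|_{L^1}$, $\|\rho\|_{L^1}$, $\|\partial\phi\|_\infty$ and the lower bound on $\rho_\phi$ already secured, with no loss of derivatives; and $\|u\|_\infty \leq \|u_0\|_\infty$ follows from the maximum principle for the alignment force, which pulls $u$ toward the average $\uF \in [u_-,u_+]$.

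The only genuinely new quantity is $\partial_j\st$, which I would treat through $\st = \wt\rho_\phi$, writing $\partial_j\st = \partial_j\wt\,\rho_\phi + \wt\,\partial_j\rho_\phi$. Here $\partial_j\rho_\phi$ is again kernel-smoothed, while $\n\wt$ solves the linear transport ODE $\frac{d}{dt}\n\wt = -(\n\uF)^\top\n\wt$ whose coefficient $\n\uF$ is bounded by smoothing, so $\n\wt$ stays bounded on $[0,T]$ once $\wt_0 \in H^l \hookrightarrow C^1$. This is where the two-sided bounds on the weight (assumed in the full multi-D statement, Theorem \ref{t:uni}, and propagated by the pure transport of $\wt$) are used to keep $\st$ and $\partial_j\st$ under control. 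A Grönwall estimate on $\frac{d}{dt}\partial_j u = -e\,\partial_j u + (\text{bounded forcing})$, in which $e \geq 0$ prevents amplification, then bounds every $\partial_j u$ and hence all of $\n\u$, closing \eqref{cont_criterion}. I expect the principal obstacle to be exactly this closure: organizing the transverse-derivative and strength estimates so that the only derivatives of $u$ and $\rho$ that appear are absorbed either by the smoothing of $\phi$ or by the favorable sign of $e$, with no net loss of regularity. Everything else is a routine Grönwall continuation on top of Theorem \ref{lwp}.
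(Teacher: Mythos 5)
Your proposal is correct and follows essentially the same route as the paper: propagate $e\geq 0$ from the conservation law, use $e=\partial_{\bd}u+\st$ with $\st=\wt\rho_\phi$ bounded to control the directional derivative and hence $\rho$ and $\rho_\phi$ from below, and then bootstrap the transverse derivatives $\partial_j u$ via Gr\"onwall, exploiting the damping $-e\,\partial_j u$ together with kernel-smoothing bounds on $\partial_j \u_F$ and a transport estimate for $\n\wt$. The only (cosmetic) differences are that the paper propagates and bounds $e$ through the logistic ODE $\dot e = e(\wt\rho_\phi - e)$ along characteristics rather than the Jacobian identity $eJ=e_0$, and bounds $\n\wt$ in two steps (first $\partial_1\wt$, then a general partial) rather than by a single vector Gr\"onwall.
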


Turning to long time behavior, we note that there is exponential $L^{\infty}$-based alignment when the kernel is bounded below, see Theorem \ref{thm:linf_alignment}. 
The proof is analogous to the Cucker-Smale case given by Ha and Liu in \cite{HL2009} so we don't include it as a main result; but it is an important one as it shows 
that the new model retains strong alignment characteristics. Additionally, $L^{\infty}$-based alignment will be used for the 
small data and strong flocking results. In Section \ref{local_alignment}, for local communication kernels, we show conditional alignment of the velocity in the $L^2$ sense
(as opposed to the unconditional $L^2$-based alignment result in the Cucker-Smale case).  
Let $V_2(t) = \frac{1}{2} \int |\u(t,x) - \bar{\u}(t)|^2 \rho(t,x) \dx$, where $\bar{\u}(t) = \frac{1}{M} \int_{\T^n} \u(t,x) \rho(t,x) \dx$ is the average momentum at time $t$ 
and $M = \int_{\T^n} \rho_0(x) \dx$ is the mass of the flock.
\begin{theorem} [Alignment in $L^2$ under local communication]
    \label{thm:l2_alignment_intro}
    Assume (A1)-(A5).
    For smooth initial data and local kernels $\phi(x,y) \geq c_1 \mathbbm{1}_{|x-y|<r_0}$, there exists $c_1' := c_1'(r_0, c_1) > 0$ such that 
    if the solution satisfies 
    \begin{equation}
        \label{ineq:dissipation_condition}
        0 < \frac{\wt_+ - \wt_-}{\wt_-} \leq \frac{c_1'}{M \|\phi\|_{\infty}} \frac{\rho_-^2(t)}{\rho_+(t)}, \hspace{5mm} t \geq 0 
    \end{equation}
    Then there is exponential $V_2$-based alignment. In other words, there exists a constant $\delta > 0$ such that  
    \begin{equation*}
        V_2(t) \leq V_2(0) e^{-\delta t}
    \end{equation*}
\end{theorem}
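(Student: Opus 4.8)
The plan is to reduce everything to a single Grönwall inequality $\ddt V_2 \le -\delta V_2$ with $\delta>0$, working throughout with the smooth solutions supplied by Theorem \ref{lwp} so that the manipulations below are justified.

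\emph{Step 1 (energy identity).} Since mass is conserved, the continuity equation gives, for any scalar field $\psi$, the transport identity $\ddt \int_{\T^n} \rho\,\psi \dx = \int_{\T^n}\rho\,(\partial_t \psi + \u\cdot\nabla\psi)\dx$. I would apply this with $\psi = \tfrac12 |\u - \bar\u|^2$ and insert the momentum equation $\partial_t \u + \u\cdot\nabla\u = \st(\ave{\u}_\rho - \u) =: F$. The crucial point is that $\int_{\T^n}\rho(\u-\bar\u)\dx = 0$ by the definition of $\bar\u$, so the contribution of $\dot{\bar\u}$ drops out \emph{even though momentum is not conserved} for the $\st$-model. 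This yields the clean identity
\[
\ddt V_2 = \int_{\T^n}\rho\,(\u-\bar\u)\cdot F \dx .
\]

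\emph{Step 2 (symmetrization and splitting).} Writing $F = \wt\big((\u\rho)_\phi - \u\rho_\phi\big)$ and $w:=\u-\bar\u$, the right-hand side becomes $\iint \wt(x)\phi(x-y)\,w(x)\cdot(w(y)-w(x))\,\rho(x)\rho(y)\dy\dx$. Symmetrizing in $x\leftrightarrow y$ and decomposing $\wt(x)w(x)-\wt(y)w(y) = \tfrac{\wt(x)+\wt(y)}{2}(w(x)-w(y)) + \tfrac{\wt(x)-\wt(y)}{2}(w(x)+w(y))$ separates a dissipative part from an error term:
\[
\ddt V_2 = -\tfrac14 \iint \phi\,(\wt(x)+\wt(y))\,|w(x)-w(y)|^2\rho(x)\rho(y) + \tfrac14\iint \phi\,(\wt(x)-\wt(y))\big(|w(y)|^2-|w(x)|^2\big)\rho(x)\rho(y).
\]
Because $\wt$ is purely transported, $\wt_\pm$ are constant in time. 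The first (good) term is $\le -\tfrac{\wt_-}{2}\cD[\u]$, where $\cD[\u] := \iint \phi(x-y)|\u(x)-\u(y)|^2\rho(x)\rho(y)$, and the second (error) term is bounded by $(\wt_+-\wt_-)\|\phi\|_\infty M\,V_2$ using $|\wt(x)-\wt(y)|\le \wt_+-\wt_-$ and the crude inequality $\big||w(y)|^2-|w(x)|^2\big|\le |w(x)|^2+|w(y)|^2$.

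\emph{Step 3 (coercivity --- the main obstacle).} Everything hinges on a weighted Poincaré / spectral-gap estimate of the form $\cD[\u]\ge \kappa_0\,\tfrac{\rho_-^2}{\rho_+}\,V_2$. Using $\phi(x,y)\ge c_1\mathbbm{1}_{|x-y|<r_0}$ I would first bound $\cD\ge c_1\rho_-^2 \iint_{|x-y|<r_0}|\u(x)-\u(y)|^2\dy\dx$, then run a covering argument on the connected torus to control $\int|\u-\langle\u\rangle|^2$ (with $\langle\u\rangle$ the Lebesgue mean), and finally pass to the $\rho$-weighted mean via $\rho_+$ together with the minimizing property of $\bar\u$. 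This is where the constant $c_1'=c_1'(r_0,c_1)$ is produced, chosen strictly below $\kappa_0/2$ so as to leave a margin. Tracking this constant to the exact form $\rho_-^2/\rho_+$ appearing in \eqref{ineq:dissipation_condition} is the genuinely technical part.

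\emph{Step 4 (closing the estimate).} Combining Steps 2--3,
\[
\ddt V_2 \le -\Big( \tfrac{\wt_-\kappa_0}{2}\tfrac{\rho_-^2}{\rho_+} - (\wt_+-\wt_-)\|\phi\|_\infty M \Big) V_2 .
\]
The dissipation condition \eqref{ineq:dissipation_condition}, with $c_1'<\kappa_0/2$, rewrites as $(\wt_+-\wt_-)\|\phi\|_\infty M \le c_1'\wt_-\tfrac{\rho_-^2}{\rho_+}$, so the bracket is $\ge \wt_-\big(\tfrac{\kappa_0}{2}-c_1'\big)\tfrac{\rho_-^2}{\rho_+}>0$. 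Since $\wt_\pm$ are conserved, the same condition simultaneously forces $\tfrac{\rho_-^2(t)}{\rho_+(t)}$ to stay bounded below by a fixed positive constant for all $t$, which promotes the bracket to a \emph{uniform} $\delta>0$ with $\ddt V_2\le -\delta V_2$. Grönwall's lemma then gives $V_2(t)\le V_2(0)e^{-\delta t}$, as claimed.
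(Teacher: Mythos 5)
Your proposal is correct and follows essentially the same route as the paper: the identity $\tfrac{d}{dt}V_2=\int\rho(\u-\bar\u)\cdot F\,\dx$ is the same computation the paper performs via $V_2=\cE-\tfrac{1}{2M}P^2$, your symmetrized split into a $\wt_-$-weighted dissipation plus a $(\wt_+-\wt_-)$-error matches the paper's $-I_1+I_2$ decomposition (with the same resulting constants up to harmless factors of $2$), and your Step 3 coercivity bound $\cD[\u]\gtrsim\tfrac{\rho_-^2}{\rho_+}V_2$ is exactly Lemma 2.1 of \cite{LS-entropy}, which the paper simply cites. The absorption under \eqref{ineq:dissipation_condition} and the observation that the time-constancy of $\wt_\pm$ forces a uniform lower bound on $\rho_-^2/\rho_+$ likewise coincide with the paper's conclusion.
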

\begin{remark}
    Non-integrability of $\frac{\rho_-^2}{\rho_+}$ is the key for alignment under local kernels. This was first observed by 
    Tadmor in \cite{Tadmor-notices} in the case of the Cucker-Smale model.  
    However, when $\wt$ is non-constant, the constraint is a (more stringent) uniform lower bound on $\frac{\rho_-^2}{\rho_+}$ and thus non-integrability is automatic.
    Notably, $V_2$-based alignment in the Cucker-Smale case with the relaxed non-integrability assumption is not necessarily exponential.
\end{remark}
With alignment and a threshold condition for 1D global well-posedness in hand, the question arises as to whether the density converges to a limiting distribution (a.k.a strong flocking). 
In Section \ref{sec:strong_flocking}, we affirm this is the case in 1D, provided the entropy $e_0$ and the kernel $\phi$ both bounded away from zero.
\begin{theorem} [Strong Flocking in 1D]
    \label{thm:strong_flocking_intro}
    Assume (A1)-(A4). If in addition, the kernel and $e_0$ are bounded below, i.e. $\phi \geq c_1 > 0$ (which implies (A5)) and $e_0 = \partial_x u_0 + \wt_0 (\rho_0)_{\phi} \geq c_2 > 0$, 
    for some constants $c_1$, $c_2$, then 
    there exists a global in time solution with a limiting velocity $u_{\infty}$. In particular, there exists $\delta > 0$ such that 
    \begin{equation*}
        \|u - u_{\infty} \|_{\infty} + \|\partial_x u  \|_{\infty} + \|\partial_x^2 u \|_{\infty} \leq e^{-\delta t} 
    \end{equation*}
    As a consequence, there exists a limiting density distribution $\rho_{\infty}$ such that 
    \begin{equation*}
        \|\rho(t, \cdot) - \rho_{\infty}(\cdot - t u_{\infty} ) \|_{\infty} \leq e^{-\delta t}
    \end{equation*}
\end{theorem}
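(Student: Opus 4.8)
The plan is to follow the Lagrangian strategy used for Cucker--Smale strong flocking, with the conservation law \eqref{conservation_law} as the engine. Global existence comes for free: $e_0 \geq c_2 > 0$ is exactly the threshold of Theorem~\ref{t:GWP_1D}, so a global solution exists and every estimate below is carried out on $[0,\infty)$. Let $X(t,\alpha)$ solve $\dot X = u(X,t)$, $X(0,\alpha)=\alpha$. Reading \eqref{conservation_law} along characteristics gives $\tfrac{d}{dt}e = -e\,\partial_x u$, hence the Jacobian identity $\partial_\alpha X = e_0/e$, and, combined with mass conservation $\rho(t,X)\,\partial_\alpha X = \rho_0$, the representation $\rho(t,X(t,\alpha)) = \rho_0(\alpha)\,e(t,X)/e_0(\alpha)$. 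Thus the whole flock is controlled once $e$ is trapped between positive constants.

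First I would trap $e$. Since $\wt$ is purely transported its range is preserved, and together with the kernel bound and mass conservation, $c_1 M \leq \rho_\phi \leq M\|\phi\|_\infty$, the strength $\st = \wt\rho_\phi$ lies between positive constants $S_- \leq \st \leq S_+$ (the lower bound needs $\wt_- > 0$, automatic in the Cucker--Smale normalization $\wt\equiv 1$). Feeding this into $\tfrac{d}{dt}e = e(\st - e)$ shows $e$ is attracted into $[\min(c_2,S_-),\max(\|e_0\|_\infty,S_+)]$, so $0 < E_- \leq e \leq E_+$ for all time. Through the Jacobian identity this makes $\partial_\alpha X$ and $\rho$ bounded above and below uniformly in time.

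Next, Theorem~\ref{thm:linf_alignment} applies (as $\phi \geq c_1 > 0$) and produces $u_\infty$ with $\|u - u_\infty\|_\infty \leq C e^{-\delta t}$. Since $\tfrac{d}{dt}(X - u_\infty t) = u(X,t)-u_\infty$ is exponentially small, $X(t,\alpha) - u_\infty t$ converges exponentially to a limit $\mathcal X(\alpha)$, which is bi-Lipschitz because $\partial_\alpha X = e_0/e$ stays between positive constants. The crux is upgrading the $L^\infty$ alignment to exponential decay of $\partial_x u$ and $\partial_x^2 u$. For this I would prove uniform-in-time Sobolev bounds $\sup_t\big(\|u-u_\infty\|_{H^m} + \|\wt\|_{H^l} + \|\rho\|_{H^k}\big) < \infty$: the local energy estimates behind Theorem~\ref{lwp} close globally once the forcing $\wt\big((u\rho)_\phi - u\rho_\phi\big) = \wt\big(((u-u_\infty)\rho)_\phi - (u-u_\infty)\rho_\phi\big)$ is seen to decay, which follows from alignment together with the lower bound $\rho_\phi \geq c_1 M$ (the same lower bound feeds the continuation criterion \eqref{cont_criterion}). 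Interpolating the exponentially small $\|u-u_\infty\|_\infty$ against the uniformly bounded $\|u-u_\infty\|_{H^m}$ then gives $\|\partial_x u\|_\infty + \|\partial_x^2 u\|_\infty \leq C e^{-\delta t}$ (with $\delta$ possibly reduced); applied to $\rho$, the same interpolation shows the limiting profile defined below is Lipschitz.

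Finally, I define $\rho_\infty$ through $\rho_\infty(\mathcal X(\alpha)) = \rho_0(\alpha)/\mathcal X'(\alpha)$. Combining $\rho(t,X)=\rho_0/\partial_\alpha X$ with the exponential convergences $\partial_\alpha X \to \mathcal X'$ and $X - u_\infty t \to \mathcal X$, and using that $\partial_\alpha X$ is bounded below while $\rho_\infty$ is Lipschitz, translates the Lagrangian convergence into $\|\rho(t,\cdot) - \rho_\infty(\cdot - u_\infty t)\|_\infty \leq C e^{-\delta t}$. I expect the main obstacle to be the uniform-in-time Sobolev bounds of the third paragraph: the derivatives $\partial_x u$ and $\partial_x^2 u$ carry no damping of their own, so their decay must be imported from the $L^\infty$ alignment by interpolation, and this is legitimate only if the high Sobolev norms are kept from growing — which rests entirely on trapping $e$, hence $\rho$ and $\rho_\phi$, between positive constants via the conservation law and the kernel lower bound.
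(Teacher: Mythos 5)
Your setup---global existence from Theorem \ref{t:GWP_1D}, trapping $e$ between positive constants via the logistic ODE $\dot e = e(\st - e)$, the Lagrangian identity $\partial_\alpha X = e_0/e$, and extracting $u_\infty$ from Theorem \ref{thm:linf_alignment}---is sound and consistent with the paper. The gap is in your third paragraph, which is the crux of the theorem. You assert uniform-in-time bounds on $\|u-u_\infty\|_{H^m}+\|\wt\|_{H^l}+\|\rho\|_{H^k}$ on the grounds that the energy estimates ``close globally once the forcing decays.'' They do not: the Gronwall coefficients in \eqref{rho_energy_estimate}--\eqref{u_energy_estimate} are $\|\partial_x u\|_{\infty}$, $\|\partial_x \rho\|_{\infty}$, $\|\rho\|_{\infty}$, which at this stage are only known to be \emph{bounded} (from trapping $e$), not time-integrable; $\|\partial_x\rho\|_{\infty}$ in particular cannot decay, since $\rho\to\rho_\infty$ and $\rho_\infty$ is not constant in general. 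Bounded, non-integrable coefficients yield at best exponential \emph{growth} of the Sobolev norms, and interpolating an exponentially growing $\|u-u_\infty\|_{H^m}$ against the exponentially small $\|u-u_\infty\|_{\infty}$ does not produce decay of $\partial_x u$ unless the rates cooperate, which you cannot arrange. Worse, making the coefficient $\|\partial_x u\|_{\infty}$ time-integrable is essentially the decay you are trying to extract, so the route is circular.

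The misstep is flagged by your own closing remark that ``$\partial_x u$ and $\partial_x^2 u$ carry no damping of their own.'' They do: differentiating the velocity equation, the quadratic and relaxation terms combine as $-(\partial_x u)^2 - \st\,\partial_x u = -e\,\partial_x u$ (cf.\ \eqref{eqn:partial_u}), so the hypothesis $e \geq c > 0$ supplies exactly the damping you say is absent---this is why the theorem assumes $e_0 \geq c_2 > 0$ rather than merely $e_0 \geq 0$. The paper's proof runs the $L^\infty$ estimate on these damped equations directly: the forcing is bounded by $(\wt_+ + \|\partial_x \wt\|_{\infty})\,\cA(t)$, which is exponentially small by Theorem \ref{thm:linf_alignment}; the boundedness of $\partial_x \wt$ and $\partial_x^2 \wt$ comes from the exponential decay of $\partial_x u_F$ and $\partial_x^2 u_F$ as in \eqref{ineq:decay_u_F}; and the term $-e\,\partial_x u$ (resp.\ $-e\,\partial_x^2 u$) closes the differential inequality with exponential decay. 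No uniform Sobolev bounds and no interpolation are needed. If you replace your interpolation step with this damped-equation argument, your Lagrangian construction of $\rho_\infty$ goes through (the paper instead works in the moving frame $x' = x - u_\infty t$ and shows $\partial_t \rho = E(t)$, which is an equivalent endgame).
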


It is not known, even in the Cucker-Smale case, what the limiting distribution $\rho_{\infty}$ looks like. 
However, the $L^1$ distance from the uniform distribution $\bar{\rho} = \frac{M}{2\pi}$ can be estimated using relative entropy estimates. 
In Section \ref{sec:entropy}, we establish the following theorem, which is an extension of the result by Leslie and Shvydkoy for the 
Cucker-Smale case established in \cite{LS-entropy}.
\begin{theorem} [Deviation of limiting flock from the uniform distribution]
    \label{thm:entropy_estimate_intro}
    Assume (A1)-(A5) and that $\phi$ satisfies $\phi(r) \geq \mathds{1}_{r \leq r_0}$, and $e_0 = \partial_x(u_0) + \wt_0 (\rho_0)_{\phi} \geq 0$.
    Let $\tilde{e} = \partial_x u + \cL_{\phi} \rho$, where $\cL_{\phi} \rho = \wt(x) \int_{\T} (\rho(y) - \rho(x)) \phi(x-y) \dy$ 
    and $\tilde{q} = \frac{\tilde{e}}{\rho}$. 
    If $\|\tilde{q}\|_{\infty} \leq \tilde{Q} < \wt_+ \|\phi\|_{L^1}$ for some constant $\tilde{Q}$, 
    then there exists a constant $c := c(r_0)$ such that 
    \begin{equation*}
        \limsup_{t \to \infty} \|\rho(t) - \bar{\rho} \|_{L^1} 
            \leq  \Big( \tilde{Q} +  \|\phi\|_{\infty} (\wt_+ - \wt_-) \Big) \frac{M \wt_+ \|\phi\|_{\infty}}{c(\wt_+ \|\phi\|_{L^1} - \tilde{Q})} 
    \end{equation*}
\end{theorem}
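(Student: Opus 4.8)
The plan is to pass to the long-time regime, where the conservation law reduces the problem to a coercivity estimate for the nonlocal operator $\cL_\phi^0 \rho := \rho_\phi - \|\phi\|_{L^1}\rho$, combined with a maximum-principle bound on $\rho_+$. First I would record the algebra linking the two ``entropies''. Since $\st = \wt \rho_\phi$, the conserved quantity of \eqref{conservation_law} is $e = \partial_x u + \wt \rho_\phi$, and a direct computation gives $\cL_\phi \rho = \wt \cL_\phi^0 \rho$ together with the identity $\tilde{e} = e - \wt \rho \|\phi\|_{L^1}$, equivalently $\cL_\phi \rho = \tilde{q}\rho - \partial_x u$. Writing $q := e/\rho$, the continuity equation for $\rho$ and the transport equation $\partial_t e + \partial_x(ue)=0$ show that $e$ is transported as a density, so $e_0 \ge 0$ propagates to $e \ge 0$, i.e. $q \ge 0$ for all time; moreover $\tilde{q} = q - \wt\|\phi\|_{L^1}$, whence $q \ge \wt\|\phi\|_{L^1} - \tilde{Q}$ under the standing hypothesis $\|\tilde{q}\|_\infty \le \tilde{Q}$.

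Next I would invoke the long-time results of the earlier sections to make $\partial_x u$ negligible. Alignment (Theorem~\ref{thm:linf_alignment}, or Theorem~\ref{thm:l2_alignment_intro} for local kernels) together with the conservation law forces $\|\partial_x u(t)\|_\infty \to 0$; I will carry $\partial_x u$ as a vanishing error and take limits only at the end, so that no actual convergence of $\rho$ is needed and the conclusion is a genuine $\limsup$. In this regime $\cL_\phi \rho = \tilde{q}\rho - \partial_x u$ reads $\rho_\phi = \tfrac{q}{\wt}\rho + o(1)$. Evaluating at a point $x_\ast$ where $\rho$ attains its maximum $\rho_+$, and using $\rho_\phi(x_\ast) = \int_\T \rho(y)\phi(x_\ast-y)\dy \le \|\phi\|_\infty M$ on one side against $q(x_\ast)/\wt(x_\ast) \ge \|\phi\|_{L^1} - \tilde{Q}/\wt(x_\ast)$ on the other, yields a uniform large-time bound
\begin{equation*}
    \limsup_{t\to\infty} \rho_+(t) \lesssim \frac{\wt_+ \|\phi\|_\infty M}{\wt_+\|\phi\|_{L^1} - \tilde{Q}},
\end{equation*}
the denominator being positive precisely by the hypothesis $\tilde{Q} < \wt_+\|\phi\|_{L^1}$. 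This step produces both the factor $M\|\phi\|_\infty$ and the denominator in the final bound.

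For coercivity I would use that $\phi \ge \mathds{1}_{r \le r_0}$ gives a spectral gap for the symmetric form: testing $-\cL_\phi^0\rho$ against $\rho$ and symmetrizing,
\begin{equation*}
    -\int_\T \cL_\phi^0\rho \cdot \rho \dx = \tfrac{1}{2}\int_\T\int_\T (\rho(x)-\rho(y))^2 \phi(x-y)\dy\dx \ge c(r_0)\,\|\rho - \bar\rho\|_{L^2}^2 ,
\end{equation*}
where $c(r_0)>0$ bounds below the Fourier gap $\|\phi\|_{L^1}-\hat\phi(k)$ over $k \neq 0$. Testing the limiting relation $\wt\cL_\phi^0\rho = \tilde{q}\rho - \partial_x u$ against $\rho$ and splitting $\wt = \wt_- + (\wt-\wt_-)$ with $0 \le \wt - \wt_- \le \wt_+ - \wt_-$ turns the left side into $\wt_-$ times the coercive form plus an error controlled by $\|\phi\|_\infty(\wt_+-\wt_-)$; the right side, after using $q \ge 0$ to rewrite the main contribution as $\int \tilde{q}\rho(\rho-\bar\rho)$ up to a weight-variation defect (the identity $\int \tilde{q}\rho = 0$ of the Cucker--Smale case now holds only modulo $\|\phi\|_\infty(\wt_+-\wt_-)M$, because $\wt$ is transported along $\uF \neq u$), is bounded by $(\tilde{Q} + \|\phi\|_\infty(\wt_+-\wt_-))\,\rho_+\,\|\rho-\bar\rho\|_{L^1}$. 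Collecting terms gives $c(r_0)\|\rho-\bar\rho\|_{L^2}^2 \lesssim (\tilde{Q} + \|\phi\|_\infty(\wt_+-\wt_-))\,\rho_+\,\|\rho-\bar\rho\|_{L^1}$; applying $\|\rho-\bar\rho\|_{L^1} \le \sqrt{2\pi}\,\|\rho-\bar\rho\|_{L^2}$, cancelling one power of $\|\rho-\bar\rho\|_{L^2}$, and inserting the $\rho_+$ bound produces the asserted estimate after absorbing $2\pi$ into $c$.

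The main obstacle is the interplay of the two error mechanisms absent in the Cucker--Smale case. First, because $\wt$ is transported by the Favre field $\uF$ rather than by $u$, the clean mean-zero identity $\int \tilde{q}\rho = 0$ fails and must be replaced by a quantitative defect of size $\|\phi\|_\infty(\wt_+-\wt_-)$; keeping this defect consistent through both the maximum-principle and the coercivity steps is the delicate bookkeeping that generates the $\|\phi\|_\infty(\wt_+-\wt_-)$ summand in the numerator. Second, since only a local lower bound $\phi \ge \mathds{1}_{r\le r_0}$ is assumed, one cannot simply invoke strong flocking to annihilate $\partial_x u$; instead $\|\partial_x u\|_\infty \to 0$ must be extracted from the available alignment together with the uniform bounds furnished by the conservation law, and every estimate has to be organized so that $\partial_x u$ enters only inside a $\limsup$. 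Verifying the spectral gap $c(r_0)>0$ for a general radial $\phi$ dominated below by an indicator, and tracking the precise weight-dependence of the constants so as to land exactly the stated numerator and denominator, is the remaining technical core.
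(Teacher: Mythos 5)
Your proposal takes a genuinely different route from the paper: you test the (limiting) relation $\wt\,\cL^0_\phi\rho=\tilde q\rho-\partial_x u$ against $\rho$ and close a static coercivity estimate, whereas the paper runs a dynamic argument on the relative entropy $\cH=\int_\T\rho\log\frac{\rho}{\bar\rho}\dx$: it computes $\frac{d\cH}{dt}$, substitutes $\partial_x u=\tilde e-\cL_\phi\rho$ \emph{inside} the time derivative, symmetrizes to extract the dissipative form, converts back and forth with the Csisz\'ar--Kullback inequality, and Gronwalls $Y=\sqrt{\cH}$ before taking $\limsup$. The difference is not cosmetic, because the paper's route never needs any decay of $\partial_x u$ --- the term $-\int\rho\,\partial_x u\dx$ is handled exactly, for every $t$, by the decomposition into $I_1+I_2+I_3$.

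That is where your argument has a genuine gap. Everything in your plan after the first paragraph rests on $\|\partial_x u(t)\|_\infty\to 0$, and this is not available under the hypotheses of the theorem. Theorem~\ref{thm:linf_alignment} requires a global lower bound $\phi\geq c_1>0$, while here only $\phi\geq\mathds{1}_{r\leq r_0}$ is assumed; Theorem~\ref{thm:l2_alignment_intro} requires the extra dissipation condition \eqref{ineq:dissipation_condition}, which is not assumed; and even granting alignment of velocities ($\cA(t)\to0$), passing to decay of the \emph{derivative} $\partial_x u$ is exactly the content of the strong-flocking argument of Section~\ref{sec:strong_flocking}, which needs both $\phi\geq c_1>0$ globally and the strict bound $e_0\geq c_2>0$, whereas here only $e_0\geq 0$ is assumed (so $e$, hence the damping coefficient in the $\partial_x u$ equation, may vanish). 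You flag this extraction as ``the remaining technical core,'' but without it the relation $\rho_\phi=\frac{q}{\wt}\rho+o(1)$ on which both your $\rho_+$ bound and your coercivity step rely simply is not established, and the argument does not close. A secondary issue: your static maximum-point bound naturally produces the denominator $\wt(x_*)\|\phi\|_{L^1}-\tilde Q\geq \wt_-\|\phi\|_{L^1}-\tilde Q$ (the map $s\mapsto s/(s\|\phi\|_{L^1}-\tilde Q)$ is decreasing, so the worst case is $s=\wt_-$), which under the stated hypothesis $\tilde Q<\wt_+\|\phi\|_{L^1}$ may be negative, in which case no upper bound on $\rho_+$ follows; the paper instead obtains its constant from the logistic differential inequality of Lemma~\ref{lma:bd_on_rho}, which is valid for all $t$ and does not require $\partial_x u$ to vanish.
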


\begin{remark}
    In the Cucker-Smale case $\wt = 1$, the quantity $q = e/\rho$ is transported. 
    Consequently, $\tilde{Q} = \|q_0\|_{\infty}$
    and we recover the result of Leslie and Shvydkoy in \cite{LS-entropy}. 
    They observed that the bounding expression is linear in $\|q_0\|_{\infty}$ for small values 
    of $\|q_0\|_{\infty}$, showing that small initial values for $q_0$ lead to close 
    to uniform distributions of the flock for large times.  In the case presented here, 
    where $\wt$ is not necessarily equal to $1$, we pick up an additional term with 
    linear dependence on $(\wt_+ - \wt_-)$.  So, to obtain close to uniform 
    distributions of the flock for large times, one requires both smallness $\tilde{Q}$ and 
    smallness of $(\wt_+ - \wt_-)$.  See Remark \ref{rmk:bd_on_q_tilde} for smallness 
    conditions on $\tilde{Q}$. 
\end{remark}

\subsection{Outline of Paper}
The paper will be organized as follows.  
In section \ref{Properties}, we will discuss the basic properties of the $\st$-model-- namely, the maximum principle and the lack of both momentum conservation and an energy law--and compare these properties to that of the Cucker-Smale and Motsch-Tadmor models.  
In section \ref{alignment}, we record the $L^{\infty}$-based exponential alignment result and establish the conditional $L^2$-based alignment result under a local communication kernel, Theorem \ref{thm:l2_alignment_intro}. 
In section \ref{LWP}, local in time well-posedness along with the continuation criterion is established, Theorem \ref{lwp}.
In section \ref{GWP_1D}, 1D global well-posedness is established under the 
threshold criterion: $e_0 = \partial_x u_0 + \wt_0(\rho_0)_{\phi} \geq 0$, i.e. the 1D version of Theorem \ref{thm:UniGWP}.
In section \ref{sec:limiting_density_profile}, we establish, in 1D, strong flocking and estimate the deviation of the limiting density from the uniform distribution via relative entropy estimates, Theorem \ref{thm:strong_flocking_intro} and Theorem \ref{thm:entropy_estimate_intro}.
In section \ref{UniGWP}, the 1D global well-posedness argument is extended to uni-directional flocks in multiple dimensions, i.e. the full version of Theorem \ref{thm:UniGWP}.  
In Section~\ref{description_of_numerics}, we provide a comparison of numerical solutions to the $\st$-model, Motsch-Tadmor model, and Cucker-Smale model and a description of the numerical method. 
The numerics illustrate that when $\wt_0 = 1/\rho_{\phi}$, the $\st$-model displays similar qualitative behavior to the Motsch-Tadmor model, see tables \ref{plots:solns_CS}, \ref{plots:solns_WM_MT}, \ref{plots:solns_MT}.  
Convergence plots as the mesh size approaches zero are also included in order to validify the numerical method, see table \ref{plots:vary_k_and_h}.  Lastly, the Appendix contains some of the technical estimates used throughout the paper.

\section{Properties of $\st$-model}
\label{Properties}
The alignment force in \eqref{EAS_WM} has a similar structure to the alignment force in \eqref{EAS_CS} and thus inherits 
similar features. For instance, it inherits the maximum principle for the velocity (i.e. $\|\u\|_{\infty} \leq \|\u_0\|_{\infty}$) 
which is crucial for alignment. 
Nonetheless, there are some key differences.  The presence of the weight $\wt$ destroys the symmetry of the communication strength, 
as in the Motsch-Tadmor case.  That is to say, the force exerted by particle 'x' on particle 'y' may not be equal to the force exerted by particle 
'y' on particle 'x'.  As a consequence, there is no conservation of momentum nor is there a dissipative energy law.  
Let us illustrate how the weight $\wt$ obstructs these laws. 
The momentum $P$ and the energy $\cE$ are given by 
\begin{align*}
    P = \int_{\T} \rho \u \dx, \hspace{5mm} \cE = \frac{1}{2} \int_{\T} |\u|^2 \rho dx 
\end{align*}
For the momentum, we have  
\begin{align*}
    \frac{d}{dt} P
    &= - \int_{\T} \u \nabla \cdot (\u \rho) \dx + \int -\rho \u \cdot \nabla \u + \rho \wt ((\u\rho)_{\phi} - \u \rho_{\phi}) \hspace{1mm} \dx \\
    &= - \int_{\T} div(\rho |\u|^2) + \int \rho \wt ((\u\rho)_{\phi} - \u \rho_{\phi}) \hspace{1mm} \dx \\
    &= \int_{\T^2} \wt(x) (\u(y) - \u(x)) \phi(x-y) \rho(x) \rho(y) \hspace{1mm} \dy \dx \\
\end{align*}
Symmetrizing,  
\begin{align*}
    &= -\int_{\T^2} \wt(y)  (\u(y) - \u(x)) \phi(x-y) \rho(x) \rho(y) \hspace{1mm} \dy \dx \\
\end{align*}
and averaging the last two lines, we obtain 
\begin{equation}
    \label{momentum_time_deriv}
    \frac{d}{dt} P 
        = -\frac{1}{2} \int_{\T^2} (\u(x) - \u(y)) (\wt(x) - \wt(y)) \phi(x-y) \rho(x) \rho(y) \hspace{1mm} \dy \dx 
\end{equation}
With the presence of a non-constant $\wt$, we cannot conclude that this is equal to zero.  A similar computation 
shows that the same problem persists for the energy. 
\begin{align*}
    \frac{d}{dt} \cE 
        &= \int_{\T} \rho \u \wt \cdot (\phi * (\rho \u) - \u \phi * \rho) \hspace{1mm} \dx \\
        &= \int_{\T} \int_{\T} \u(x) \wt(x) \cdot (\u(y) - \u(x)) \phi(x-y) \rho(x) \rho(y) \hspace{1mm} \dy \dx \\
        &= -\int_{\T} \int_{\T} \u(y) \wt(y) \cdot (\u(y) - \u(x)) \phi(x-y) \rho(x) \rho(y) \hspace{1mm} \dy \dx \\
\end{align*}
Averaging the last two lines, we get 
\begin{align*}
    &= -\frac{1}{2} \int_{\T} \int_{\T} \big( \u(x) \wt(x) - \u(y) \wt(y) \big) \cdot \big(\u(y) - \u(x) \big) \phi(x-y) \rho(x) \rho(y) \hspace{1mm} \dy \dx \\
\end{align*}
The presence of the weight $\wt$ prevents us from guaranteeing pure dissipation. 
However, we can still decompose the law into a dissipative term and an extra term. 
We have 
\begin{align}
    \label{energy_time_deriv}
    \frac{d}{dt} \cE &= -\frac{1}{2} \int_{\Omega^2} \wt(x) |\u(x)-\u(y)|^2 \phi(x-y) \rho(x) \rho(y) \dx \dy \\
        &-\frac{1}{2} \int_{\Omega} \u(y) (\u(x) - \u(y)) (\wt(x) - \wt(y)) \phi(x-y) \rho(x) \rho(y) \dx \dy   \nonumber
\end{align} 
At first, the lack of a dissipative energy law looks to pose an obstruction for alignment since energy decay 
is connected to $L^2$-based alignment.  However, we are placated by the following two facts:
\begin{enumerate} [label = (\roman*)]
    \item The maximum principle still holds for the velocity equation. 
    In particular, $\|\u\|_{\infty} \leq \|\u_0\|_{\infty}$.  As a result, the $L^{\infty}$-based 
    alignment results can be established.  See Section \ref{linf_alignment} for the precise statement. 
    \item A dissipative energy law can be 
    recovered provided there are constraints on $\wt$ and $\rho$, which allow the second 
    term to absorbed into the dissipative term. 
    With an energy law at hand, we recover a conditional alignment result in the case 
    of local kernels. The details are discussed in Section \ref{local_alignment}.
\end{enumerate}
We summarize the differences between \eqref{EAS_CS}, \eqref{EAS_MT}, and \eqref{EAS_WM} in table \ref{table:compare_models} below.

\medskip
\begin{tabular}
    { | m{3em} | m{6em}| m{8em} | m{7em} | m{9em} | } 
    \hline 
       & Entropy Law & Performs well in Heterogeneous Formations & Conservation of Momentum & Energy dissipation \\
    \hline
    CS & \cmark & \xmark & \cmark & \cmark\\ 
    \hline
    MT & \xmark & \cmark & \xmark & \xmark \\ 
    \hline
    WM & \cmark & \cmark & \xmark & \xmark \\ 
    \hline
\end{tabular}
\label{table:compare_models}

\section{Alignment} 
\label{alignment}

\subsection{Alignment in $L^{\infty}$}
\label{linf_alignment}
The alignment $L^{\infty}$-based alignment result follows a similar Lyapunov-based approach 
to the Cucker-Smale case, given by Ha and Liu in \cite{HL2009}.  The presence of the weight does not introduce any difficulties as long as it's bounded 
away from zero. Indeed, the Lagrangian formulation of the velocity equation of \eqref{EAS_WM} is given by 
\begin{equation*}
    \begin{cases}
        \dot{X}(t, \alpha) = \V(t, \alpha) \\
        \dot{\V}(\alpha, t) = \wt(t, X(t,\alpha)) \int_{\T^n} \phi(X(t, \alpha) - X(t, \beta)) (\V(t,\alpha) - \V(t, \beta)) \rho_0(\beta) d\beta
    \end{cases}
\end{equation*}
We now state the alignment theorem without proof.
Let $\mathcal{A}(t) = \max_{\alpha, \beta \in \T^n} |\V(\alpha,t) - \V(\beta,t)|$.
\begin{theorem} [Alignment and Flocking)]
    \label{thm:linf_alignment}
    Assume (A1)-(A4). If in addition $\phi$ is bounded below, $\phi \geq c_1 > 0$ (which implies (A5)), then
    \begin{equation*}
        \mathcal{A}(t) \leq \cA_0 e^{-\wt_- M c_1 t}
    \end{equation*}  
\end{theorem}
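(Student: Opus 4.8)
The plan is to run the Lyapunov argument of Ha and Liu on the velocity diameter, adapted to accommodate the non-constant weight. Set $\cA(t) = \max_{\alpha,\beta}|\V(\alpha,t)-\V(\beta,t)|$ as in the statement; I aim to derive the differential inequality $\frac{d}{dt}\cA \le -\wt_- M c_1\, \cA$, after which \GL{} yields $\cA(t)\le \cA_0 e^{-\wt_- M c_1 t}$. First I would record that $\cA$, being a maximum over a compact family of smooth functions, is locally Lipschitz and hence differentiable for a.e.\ $t$ by \RL{}, and that $\cA_0<\infty$ since the maximum principle $\|\u\|_\infty\le\|\u_0\|_\infty$ from Section~\ref{Properties} keeps all velocities bounded. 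The bulk of the work is then to estimate $\frac{d}{dt}\cA$ at a differentiability point.

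At such a time $t$ pick an extremal pair $(\alpha,\beta)$ realizing $\cA(t)$ and set $\bw = (\V(\alpha)-\V(\beta))/|\V(\alpha)-\V(\beta)|$. The key elementary observation is that $(\alpha,\beta)$ also maximizes the scalar spread in the direction $\bw$, so that $\bw\cdot\V(\beta)\le \bw\cdot\V(\gamma)\le \bw\cdot\V(\alpha)$ for every agent $\gamma$ (otherwise some pair would exceed $\cA$). By the envelope (Danskin) principle, $\frac{d}{dt}\cA = \bw\cdot(\dot\V(\alpha)-\dot\V(\beta))$. Substituting the Lagrangian alignment force, $\bw\cdot\dot\V(\alpha)$ and $-\bw\cdot\dot\V(\beta)$ become integrals against $\rho_0$ whose integrands are \emph{sign-definite}: $\bw\cdot(\V(\gamma)-\V(\alpha))\le 0$ in the first and $\bw\cdot(\V(\gamma)-\V(\beta))\ge 0$ in the second.

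This sign definiteness is exactly what tames the agent-dependent weight. Since $\wt\ge\wt_->0$ and $\phi\ge c_1>0$, and since multiplying a sign-definite integrand by a larger positive factor only pushes it further in that direction, I can replace $\wt(X(\alpha))$ and $\wt(X(\beta))$ by $\wt_-$, and $\phi$ by $c_1$, without reversing either inequality. After this the two contributions collapse to a single clean bound,
\[
\frac{d}{dt}\cA \;\le\; \wt_- c_1\, \bw\cdot(\V(\beta)-\V(\alpha)) \int_{\T^n}\rho_0\, d\gamma \;=\; -\wt_- c_1 M\, \cA,
\]
using $\bw\cdot(\V(\beta)-\V(\alpha))=-\cA$ and $\int_{\T^n}\rho_0\,d\gamma=M$. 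Integrating gives the asserted rate.

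The main obstacle is conceptual rather than computational: the weight $\wt$ breaks the symmetry of the force (Section~\ref{Properties}), so the symmetric cancellation that trivializes the estimate for \eqref{EAS_CS} is unavailable. The resolution is to avoid symmetrization entirely and rely only on the one-sided sign structure at the extremal pair, which is insensitive to the asymmetry; the uniform lower bound $\wt_-$ then plays precisely the role that the constant communication strength plays in the Cucker--Smale analysis. A secondary, standard technical point worth stating explicitly is the justification of differentiating the maximum — the a.e.\ differentiability of $\cA$ via \RL{} together with the selection of the extremal direction $\bw$.
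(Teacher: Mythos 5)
Your proof is correct and is precisely the Lyapunov/extremal-pair argument of Ha and Liu that the paper itself invokes (Theorem \ref{thm:linf_alignment} is stated without proof, with a pointer to \cite{HL2009}): differentiating $\cA$ at a maximizing pair, exploiting the one-sided sign of $\bw\cdot(\V(\gamma)-\V(\alpha))$ and $\bw\cdot(\V(\gamma)-\V(\beta))$ so that the non-symmetric weight causes no harm, and bounding the sign-definite integrands via $\wt\,\phi\ge \wt_- c_1$ to get the rate $\wt_- M c_1$. The only point worth flagging is that the paper's displayed Lagrangian system carries the factor $(\V(t,\alpha)-\V(t,\beta))$, which is a sign typo for the alignment force $(\V(t,\beta)-\V(t,\alpha))$ that your argument (correctly) assumes.
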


\begin{remark}
    When $\Omega = \R^n$, the Theorem \ref{thm:linf_alignment} also holds.  In addition, the diameter $\mathcal{D}(t) = \max_{\alpha, \beta \in \T^n} |X(\alpha,t) - X(\beta,t)|$ of the flock 
    remains bounded. 
\end{remark}
Flocking then reduces to showing global well-posedness. 

\subsection{Alignment in $L^2$}
\label{local_alignment}
We now turn to the case of local kernels and the proof of Theorem \ref{thm:l2_alignment_intro}.
For local kernels, the communication strength vanishes for agents
that are more than a distance $r_0$ apart.  In other words, $\phi(x,y) \geq c_1 \mathbbm{1}_{|x-y|<r_0}$. 
In this case, the $L^{\infty}$-based arguments in Section \ref{linf_alignment} fail due to the 
lack of a lower bound on the alignment force.
However, there is $V_2$-based alignment provided there is energy dissipation, which is present as long as \eqref{ineq:dissipation_condition} holds.
\begin{proof}[Proof of Theorem \ref{thm:l2_alignment_intro}]
    Let $\cE(t)$ and $P(t)$ be the energy and momentum as defined in Section \ref{Properties}. 
    Observe that $V_2 = \cE(t) - \frac{1}{2M} P(t)^2$. Using this along with the momentum and energy equations \eqref{momentum_time_deriv} and \eqref{energy_time_deriv}, 
    we obtain 
    \begin{align*}
        \frac{d}{dt} V_2 &= \frac{d}{dt} \cE - \frac{1}{M} P \frac{d}{dt} P \\
            &= -\frac{1}{2} \int_{\T^{2n}} \wt(x) |\u(x)-\u(y)|^2 \phi(x-y) \rho(x)\rho(y) \dx \dy \\
            &- \frac{1}{2} \int_{\T^{2n}} \u(y) (\u(x)-\u(y)) (\wt(x)-\wt(y)) \phi(x-y) \rho(x) \rho(y) \dx \dy \\ 
            &+ \frac{1}{2M} \int_{\T^n} \u(x) \rho(x) \dx \int_{\T^{2n}} (\u(x)-\u(y)) (\wt(x)-\wt(y)) \phi(x-y) \rho(x)\rho(y) \dx \dy  \\
    \end{align*}
    With the goal relating it back to $V_2$, we write it as 
    \begin{align*}
            &= -\frac{1}{2} \int_{\T^{2n}} \wt(x) |\u(x)-\u(y)|^2 \phi(x-y) \rho(x)\rho(y) \dx \dy \\
            &- \frac{1}{2} \int_{\T^{2n}} (\u(y) - \bar{\u}) (\u(x) - \bar{\u} - (\u(y) - \bar{\u})) (\wt(x)-\wt(y)) \phi(x-y) \rho(x) \rho(y) \dx \dy \\ 
            &:= -I_1 + I_2 
    \end{align*} 
    The dissipation term $I_1$ can be bounded from below by
    \begin{equation*}
        I_1 \geq c_0 \wt_- \rho_-^2 \int_{|x-y| \leq r_0} |\u(x) - \u(y)|^2 \dx \dy 
    \end{equation*}
    Let $Ave(\u) = \frac{1}{(2\pi)^n} \int_{\T^n} \u(x) \dx$.
    Using Lemma 2.1 of \cite{LS-entropy} we obtain for some constant $c_1' := c_1'(r_0, c_1)$, 
    \begin{equation*}
            \geq c_1' \wt_- \rho_-^2 \frac{1}{2} \int_{\T^n} |\u(x) - Ave(\u)|^2 \dx 
    \end{equation*}
    And from the identity $\int_{\T^n} |\u(x) - Ave(\u)|^2 \rho(x) \dx = \int_{\T^n} |\u(x) - \bar{\u}|^2 \rho(x) \dx + \int_{\T^n} |Ave(\u) - \bar{\u}|^2 \rho(x) \dx$, 
    \begin{align*}
        &\geq c_1' \wt_- \frac{\rho_-^2}{\rho_+} \frac{1}{2} \int_{\T^n} |\u(x) - \bar{\u}|^2 \rho(x) \dx \\
        &= c_1' \wt_- \frac{\rho_-^2}{\rho_+} V_2 \\            
    \end{align*}
    In $I_2$, the first term vanishes due to symmetrization.  So we are left with 
    \begin{align*}
        |I_2| 
            &\leq \frac{1}{2}  \int_{\T^{2n}} |\u(y) - \bar{\u}|^2 |\wt(x)-\wt(y)| \phi(x-y) \rho(x) \rho(y) \dx \dy \\
            &\leq \frac{1}{2} (\wt_+ - \wt_-) \|\phi\|_{\infty} \int_{\T^2} |\u(y) - \bar{\u}|^2 \rho(x) \rho(y) \dx \dy \\
            &= \frac{M}{2} (\wt_+ - \wt_-) \|\phi\|_{\infty} V_2 \\
    \end{align*}
    To absorb $I_2$ into the dissipative term $I_1$, we require that 
    \begin{equation*}
        \frac{M}{2} (\wt_+ - \wt_-) \|\phi\|_{\infty} \leq \frac{1}{2} c_1' \wt_- \frac{\rho_-^2}{\rho_+} 
    \end{equation*}
    which is equivalent to the condition \eqref{ineq:dissipation_condition}. 
    Under this constraint, we have 
    \begin{equation*}
        \frac{d}{dt} V_2 \leq - \frac{1}{2} c_1' \wt_- \frac{\rho_-^2}{\rho_+} V_2 
    \end{equation*}
    Owing to the uniform lower bound on $\frac{\rho_-^2}{\rho_+}$, we obtain exponential decay of $V_2$. 
\end{proof}

\section{Local Well-Posedness}
\label{LWP}
We will prove Theorem \ref{lwp} and establish global well-posedness for small initial data, Theorem \ref{thm:small_data_intro}.

\subsection{Local well-posedness for a viscous regularization}
The strategy is to obtain a local well-posedness result for a viscous regularization of \eqref{EAS_WM} given by 
\begin{align*}
    \label{regularized_EAS_WM}
    \tag{WM'}
    \begin{cases}
        \partial_t \rho + \nabla \cdot (\u\rho) = \epsilon \Delta \rho \\
        \partial_t \wt + \u_F \cdot \nabla \wt = \epsilon \Delta \wt \\
        \partial_t \u + \u \cdot \nabla \u = \wt ((\u \rho)_{\phi} - \u \rho_{\phi}) + \epsilon \Delta \u
    \end{cases}
\end{align*}
Then we will show that the time of existence for the regularized equation \eqref{regularized_EAS_WM} does not depend on $\epsilon$ and moreover that there's a subsequence of solutions converging to solutions to \eqref{EAS_WM}. 
The proof of local well-posedness of \eqref{regularized_EAS_WM} uses the standard Banach Fixed Point argument. 
Let $Z(t,x) = (\rho(t,x), \wt(t,x), \u(t,x))$ and let $\mathcal{N}(Z)$ denote the non-linear terms in \eqref{regularized_EAS_WM}. 
Define the map 
\begin{equation}
    \label{duhamel_formula}
    \mathcal{F}[Z](t) = 
        e^{\epsilon t \Delta} Z_0 + \int_0^t e^{\epsilon (t-s) \Delta} \mathcal{N}(Z(s)) ds 
\end{equation}
Recall that $c_0$ is the lower bound on $\rho_{\phi}$ given in assumption (A5), i.e. $\rho_{\phi} \geq c_0$. 
Let $r = c_0 / (2 \|\phi\|_{L^{\infty}})$.
We will show that there exists a small time $T$ so that this map is a contraction mapping 
on $C([0,T); B_{r}(Z_0))$ where $B_{r}(Z_0)$ denotes the the ball of radius $r$ in 
$X = (H^k \cap L^1_+) \times H^l \times H^m$ centered at $Z_0$.
The choice of the $r$ guarantees that 
\begin{equation*}
    \|\rho_{\phi}\|_{\infty} \geq c_0 - \|(\rho - \rho_0)_{\phi}\|_{\infty}
        \geq c_0 - \|\rho - \rho_0\|_{L^1} \|{\phi}\|_{L^{\infty}} \geq c_0/2 
\end{equation*}
so the lower bound $\|\rho_{\phi}\|_{\infty} \geq c_0/2$ automatically holds.
Invariance and contractivity of the map $\mathcal{F}$ will be obtained 
from estimates on $\|\int_0^t e^{\epsilon(t-s)\Delta} \mathcal{N}(Z(s)) ds\|_X$.

In the estimates below, we will use the following notation. 
If $U$ and $V$ are quantities, then $U \lesssim V$ is equivalent to 
$U \leq C(n, c_0, l, M, \phi, \epsilon) V$, where $M$ is the mass. Importantly, the constant $C$ 
does not depend on the time $T$. 
The non-linear estimates on derivatives from Appendix \ref{appdx:non-linear_estimates} 
will be used in order to estimate Sobolev norms of the Favre Filtration. 
We now proceed to show invariance of the map $\mathcal{F}$ by showing that
\begin{equation*}
    \|\mathcal{F}(Z(s)) - Z_0\|_X \leq 
        \Big\| e^{\epsilon t \Delta} Z_0 - Z_0 \Big\|_X + \Big\| 
            \int_0^t e^{\epsilon (t-s) \Delta} \mathcal{N}(Z(s)) ds \Big\|_X 
            \leq 1
\end{equation*}
provided $T$ is small. 
The first term can be made small by the continuity property of the heat semigroup. 
Regarding the second term, the $\rho$-equation has been estimated in \cite{Sbook}. 
For the $\wt$ equation, we use the Analyticity property of the heat equation 
and $l \geq n/2+2$ to get   
\begin{align*}
    \Big\| \partial^l 
        &\int_0^t e^{\epsilon(t-s) \Delta} \u_F \cdot \nabla \wt ds \Big\|_2
            \leq \int_0^t \frac{1}{\sqrt{\epsilon(t-s)}} \big\| \partial^{l-1} ( \u_F \cdot \nabla \wt) \big\|_2 ds \\
            &\leq \frac{T^{1/2}}{\epsilon^{1/2}} \Big( \|\nabla \wt\|_{\infty} \|\uavg\|_{\dot{H}^{l-1}} + \| \wt \|_{\Hldot} \|\uavg\|_{\infty} \Big) \\
\end{align*}
Using \ref{favre_estimate} to estimate $\|\uavg\|_{\dot{H}^{l-1}}$, we get 
\begin{align*}
    &\lesssim \frac{T^{1/2}}{\epsilon^{1/2}} 
         \Big( 
            \|\nabla \wt\|_{\infty}  \Big( \Big\| \frac{1}{\rho_{\phi}} \Big\|_{\infty} \|(\u \rho)_{\phi} \|_{H^{l-1}}
            + \Big\| \frac{1}{\rho_{\phi}} \Big\|_{\infty}^{l} \| \rho_{\phi} \|_{H^{l-1}} \|(\u \rho)_{\phi} \|_{\infty} \Big)
            + \| \wt \|_{\dot{H}^{l}} \|\u\|_{\infty} \Big) \\
    &\lesssim \frac{T^{1/2}}{\epsilon^{1/2}} 
        \Big( 
            \|\nabla \wt\|_{\infty}  \|\u\|_{\infty} 
            + \| \wt \|_{\dot{H}^{l}} \|\u\|_{\infty} \Big) \\
    &\lesssim \frac{T^{1/2}}{\epsilon^{1/2}} 
            \| \wt \|_{\dot{H}^{l}} \|\u\|_{\infty} \\
\end{align*}
This quantity is small for small $T$. 
We only needed $l \geq n/2 + 1$ here, but 
the energy estimates later will impose a more strict condition on the exponent. 
For the velocity equation, the transport term is estimated in \cite{Sbook}. It remains to estimate the alignment term.  
\begin{align*}
    \Big\| \partial^m
        &\int_0^t e^{\epsilon(t-s) \Delta} \wt \big( (\u\rho)_{\phi} - \u \rho_{\phi} \big) ds \Big\|_2
            \leq \int_0^t \frac{1}{\sqrt{\epsilon(t-s)}} \big\| \partial^{m-1} \big( \wt \big( (\u\rho)_{\phi} - \u \rho_{\phi} \big) \big\|_2 ds \\
            &\leq \frac{T^{1/2}}{\epsilon^{1/2}} \Big( \| \wt \|_{\infty} \|(\u\rho)_{\phi} - \u \rho_{\phi}\|_{\dot{H}^{m-1}} + \| \wt \|_{\dot{H}^{m-1}} \|(\u\rho)_{\phi} - \u \rho_{\phi}\|_{\infty} \Big) \\
            &\lesssim \frac{T^{1/2}}{\epsilon^{1/2}} \Big( \| \wt \|_{\infty} \|\u\|_{\dot{H}^{m-1}} + \| \wt \|_{\dot{H}^{m-1}} \|\u\|_{\infty} \Big) \\
        \end{align*}
This expression is small for small $T$.  
To show that $\mathcal{F}$ is a contraction, we will to show for $Z_1, Z_2 \in C([0,T]; B_r(Z_0))$
that 
\begin{equation*}
    \| \mathcal{F}(Z_1) - \mathcal{F}(Z_2) \|_{C([0,T]; B_{r}(Z_0))} \leq \alpha \|Z_1 - Z_2\|_{C([0,T]; B_{r}(Z_0))}
\end{equation*}
for some $0 < \alpha < 1$ where $\|f\|_{C([0,T]; B_{r}(Z_0))} = \sup_{0 \leq s \leq T} \|f(s)\|_X$.
In the following contractivity estimates, $C = C(n, c_0, l, M_1, M_2, \epsilon, \phi)$ is a constant where 
$M_1$, $M_2$  denotes the mass of the two respective solutions. The constant $C$ may change in each line; 
and $[\u_i]_{\rho_j} := (\u_i \rho_j)_{\phi}/(\rho_j)_{\phi}$ denotes the Favre averaging
associated to $\u_i$, $\rho_j$.
For the $\wt$-equation, using the algebra property of the $H^l$ norm, we obtain
\begin{align*}
    \Big\| \partial^l 
        &\int_0^t e^{\epsilon(t-s) \Delta} \Big( 
            [\u_1]_{\rho_1} \cdot \nabla \wt_1 -  [\u_2]_{\rho_2} \cdot \nabla \wt_2 \Big) ds \Big\|_2 \\
        &\lesssim \Big\| \partial^l \int_0^t e^{\epsilon(t-s) \Delta} \Big( 
            \big( [\u_1]_{\rho_1} - [\u_1]_{\rho_2} \big) \cdot \nabla \wt_1 + [\u_1 - \u_2]_{\rho_2} \cdot \ 
            \nabla \wt_1 + [\u_2]_{\rho_2} \cdot \nabla \big(\wt_1 - \wt_2 \big) \Big) ds \Big\|_2 \\
        &\lesssim \frac{T^{1/2}}{\epsilon^{1/2}} \Big\| 
            \big( [\u_1]_{\rho_1} - [\u_1]_{\rho_2} \big) \cdot \nabla \wt_1 + [\u_1 - \u_2]_{\rho_2} \cdot \ 
            \nabla \wt_1 + [\u_2]_{\rho_2} \cdot \nabla \big(\wt_1 - \wt_2 \big) \Big\|_{\dot{H}^{l-1}} \\
        &\lesssim \frac{T^{1/2}}{\epsilon^{1/2}} \Big(  
            \| [\u_1]_{\rho_1} - [\u_1]_{\rho_2} \|_{\dot{H}^{l-1}} \| \wt_1 \|_{\dot{H}^{l}} \  
            + \| [\u_1 - \u_2]_{\rho_2} \|_{\dot{H}^{l-1}} \cdot \| \wt_1\|_{\dot{H}^{l}} \\
            &+ \| [\u_2]_{\rho_2}\|_{\dot{H}^{l-1}}  \| \wt_1 - \wt_2 \|_{\dot{H}^{l}} \Big) \\
\end{align*}
By \ref{favre_estimate}, we have 
\begin{align*}
    \|[\u_2]_{\rho_2}\|_{\dot{H}^{l-1}} 
        &\leq \Big\| \frac{1}{{\rho_2}_{\phi}} \Big\|_{\infty} \|(\u_2 \rho_2)_{\phi} \|_{H^{l-1}}
            + \Big\| \frac{1}{{\rho_2}_{\phi}} \Big\|_{\infty}^{l} \| {\rho_2}_{\phi} \|_{H^{l-1}} \|(\u_2 \rho_2)_{\phi} \|_{\infty} \\
        &\leq C \|\u_2\|_{\infty} 
\end{align*}
And by \ref{contractivity_estimate}, we have 
\begin{align*}
    \|[\u_1]_{\rho_1} - [\u_1]_{\rho_2}\|_{\dot{H}^{l}}  
        &\leq \Big\| \frac{1}{{\rho_1}_{\phi} {\rho_2}_{\phi}} \Big\|_{\infty} 
        \| (\u_1 \rho_1)_{\phi} ({\rho_2})_{\phi} - (\u_1 \rho_2)_{\phi} ({\rho_1})_{\phi} \|_{H^l} \\
    &+ \Big\| \frac{1}{{\rho_1}_{\phi} {\rho_2}_{\phi}} \Big\|_{\infty}^{l+1} \| {\rho_1}_{\phi} {\rho_2}_{\phi} \|_{H^l}
    \| (\u_1 \rho_1)_{\phi} ({\rho_2})_{\phi} - (\u_1 \rho_2)_{\phi} ({\rho_1})_{\phi} \|_{\infty} \\
    &\leq C \| (\u_1 \rho_1)_{\phi} ({\rho_2})_{\phi} - (\u_1 \rho_2)_{\phi} ({\rho_1})_{\phi} \|_{H^l} \\
    &= C \| ({\u_1 \rho_1- \u_1\rho_2})_{\phi} ({\rho_2})_{\phi} + (\u_1 {\rho_2})_{\phi} ({\rho_2 - \rho_1})_{\phi} \|_{H^l} \\
    &\leq C \|\u_1\|_{\infty} \| Z_1 - Z_2 \|_X  \\
\end{align*}
All together, we obtain 
\begin{align*}
    \Big\| \partial^l 
        &\int_0^t e^{\epsilon(t-s) \Delta} \Big[ 
            [\u_1]_{\rho_1} \cdot \nabla \wt_1 -  [\u_2]_{\rho_2} \cdot \nabla \wt_2 \Big] ds \Big\|_2 \\
        &\leq C \frac{T^{1/2}}{\epsilon^{1/2}} \Big[
            \| Z_1\|_X \|Z_1 - Z_2\|_{X} + \|Z_2\|_X \| Z_1 - Z_2 \|_X 
         \Big] \\
        &\leq C \frac{T^{1/2}}{\epsilon^{1/2}} 
            \|Z_0 + r\|_X \| Z_1 - Z_2 \|_X  \\
\end{align*}
The contractivity for the $\rho$ and $\u$-equations can be estimated similarly.
This shows $\mathcal{F}$ is a contraction mapping for some small enough time of existence, $T$. 

\subsection{Time of existence is independent of $\epsilon$ and energy estimates}
\label{energy_estimates}
We will now show that $T$ does not depend on $\epsilon$ by obtaining 
$\epsilon$-independent energy estimates on the norm of the solution 
\begin{equation*}
    Y_{m,l,k} = \|\u\|_{H^m}^2 + \| \wt \|_{H^l}^2 + \|\rho\|_{H^k}^2 + \|\rho\|_1^2
\end{equation*}
The mass $\|\rho\|_1$ is conserved so it remains to control the other terms. 
The $\epsilon$-independent energy estimate for the $\rho$ equation is given in \cite{Sbook}. 
We record it here. Provided $m \geq k+1$,
\begin{equation*}
    \frac{d}{dt} \|\rho\|_{\Hkdot}^2 \leq C (\|\nabla \u\|_{\infty} + \|\nabla \rho\|_{\infty} + \|\rho\|_{\infty}) Y_{m,l,k}
\end{equation*}
For the purpose of obtaining a continuation criterion in Section \ref{sec_cont_criterion}, we will explicitly include the dependence of the energy estimates on $\|1/\rho_{\phi}\|_{\infty}$ 
instead of absorbing it into the implied constant. 
Testing the $\wt$-equation with $\partial^{2l} \wt$, we get 
\begin{equation*}
    \frac{d}{dt} \| \wt \|_{\Hldot}^2 = \int_{\T^n} \u_F \cdot (\nabla \wt) \partial^{2l} \wt \dx - \epsilon \| \wt \|_{\dot{H}^{l+1}}^2
\end{equation*}
Integrating by parts and using the commutator estimate and \ref{favre_estimate}, we get 
\begin{align*}
    \frac{d}{dt} \| \wt \|_{\Hldot}^2  
        &= \frac{1}{2} \int_{\T^n} \nabla \cdot \u_F |\partial^l \wt|^2 \dx   
            - \int_{\T^n} \Big( \partial^l ( \u_F \cdot \nabla \wt ) - \u_F \cdot \nabla \partial^l \wt \Big) \partial^l \wt \dx 
            - \epsilon \| \wt \|_{\dot{H}^{l+1}}^2 \\
        &\lesssim \|\nabla \u_F\|_{\infty} \| \wt \|_{\Hldot}^2   
            + \big( \|\u_F\|_{\Hldot} \|\nabla \wt\|_{\infty} + \|\nabla \u_F\|_{\infty} \| \wt \|_{\Hldot} \big) \| \wt \|_{\Hldot} \\
        &\lesssim \Big\| \frac{1}{\rho_{\phi}} \Big\|_{\infty}^2 \|\u\|_{\infty} \| \wt \|_{\Hldot}^2    
            + \Big( \Big\| \frac{1}{\rho_{\phi}} \Big\|_{\infty}^{l+1} \|\u\|_{\infty} \|\nabla \wt\|_{\infty} 
            + \Big\| \frac{1}{\rho_{\phi}} \Big\|_{\infty}^2 \|\u\|_{\infty} \Big) \| \wt \|_{\Hldot} \\
        &\lesssim \Big\| \frac{1}{\rho_{\phi}} \Big\|_{\infty}^{l+1} \|\u\|_{\infty} \| \wt \|_{\Hldot}^2    
\end{align*}
Testing the velocity equation with $\partial^{2m} \u$, and using the commutator estimate, we get 
\begin{equation*}
    \int_{\T^n} \partial^m (\u \cdot \nabla \u ) \partial^m \u \dx \lesssim \|\nabla \u\|_{\infty} \|\u\|_{\Hmdot}^2
\end{equation*}
For the alignment terms, we use the product estimate to get  
\begin{align*}
    \int_{\T^n} \partial^m \Big( \wt ( (\u\rho)_{\phi} - \rho_{\phi} ) \Big) \partial^m \u \dx 
        &\leq \big(\| \wt \|_{\infty} \|(\u\rho)_{\phi} - \rho_{\phi}\|_{\Hmdot} 
            + \| \wt \|_{\Hmdot} \|(\u\rho)_{\phi} - \rho_{\phi} \|_{\infty} \big) \|\u\|_{\Hmdot}  \\
        &\lesssim (\| \wt \|_{\infty} + \| \wt \|_{\Hmdot}) \|\u\|_{\infty} \|\u\|_{\Hmdot} 
\end{align*}
Provided $l \geq m$, the energy estimate for the velocity equation becomes, 
\begin{equation*}
    \frac{d}{dt} \|\u\|_{\Hmdot}^2 \leq (\|\nabla \u\|_{\infty} + \| \wt \|_{\infty} \|\u\|_{\infty} + \|\u\|_{\infty}) Y_{m,l,k}
\end{equation*}
Combining the energy estimates, we get an $\epsilon$-independent estimate on the norm $Y_{m,l,k}$.
\begin{align*}
    \frac{d}{dt} Y_{m,l,k} &\leq C \Big(\|\nabla \u\|_{\infty} + \|\nabla \rho\|_{\infty} + \|\rho\|_{\infty} 
        + \Big\| \frac{1}{\rho_{\phi}} \Big\|_{\infty}^{l+1} \|\u\|_{\infty} 
        + (1 + \|\wt\|_{\infty}) \| \u \|_{\infty} \Big) Y_{m,l,k} 
\end{align*}
Recall that $\rho_{\phi} \geq c_0/2$ up to some $\epsilon$-independent time $T'$ and that $\wt$ is bounded uniformly in time.
So, for $l \geq m \geq k+1 > n/2 + 2$, we obtain
\begin{equation*}
    \frac{d}{dt} Y_{m,l,k} \lesssim Y_{m,l, k}^{2},  \hspace{5mm} \text{ for } t < T'
\end{equation*}
This gives a bound on $Y_{m,l,k}$ up to some positive time $T>0$.
Due to the $\epsilon$-independence of these energy estimates, 
the local solutions to \eqref{regularized_EAS_WM} exist on the common time 
interval $[0,T]$ independent of $\epsilon$.  
We will conclude by taking $\epsilon \to 0$ 
in \eqref{regularized_EAS_WM} and using compactness properties to obtain 
a local in time solution to \eqref{EAS_WM}.

\begin{remark}
    \label{est_on_dissipative_terms}
    The full energy estimate including the dissipative terms show that for some 
    $\epsilon$-independent constant $C$, 
    \begin{equation*}
        \epsilon \int_0^T \big( \|\rho\|_{H^{k+1}}^2 + \| \wt \|_{H^{l+1}}^2 + \|\u\|_{H^{m+1}}^2 \big) ds \leq C 
    \end{equation*}
\end{remark}

\subsection{Viscous solutions to \eqref{regularized_EAS_WM} approach solutions to \eqref{EAS_WM}}
The following estimate on the time derivative will yield the necessary compactness properties. 
We will denote the solution to \eqref{regularized_EAS_WM} by $Z^{\epsilon}$ instead 
of just $Z$ in order to emphasize the dependence of the solution on $\epsilon$. 
From squaring the time derivatives in \eqref{regularized_EAS_WM}, we see that 
\begin{equation*}
    \|\partial_t Z^{\epsilon}\|_{L^2}^2 \lesssim \|Z^{\epsilon}\|_X^2 + \epsilon \|Z^{\epsilon}\|_{H^2}^2
\end{equation*}
From this inequality and Remark \ref{est_on_dissipative_terms}, 
we obtain $\partial_t Z^{\epsilon} \in L^2([0,T]; L^2 \times L^2 \times L^2)$. 
Of course, we also have $Z^{\epsilon} \in L^{\infty}([0,T]; X)$ by local well-posedness. 
Letting $Y = H^{k-1} \times H^{l-1} \times H^{m-1}$ and applying the Aubin-Lions Lemma, we 
obtain a convergent subsequence $Z^{\epsilon} \to Z^*$ in $C([0,T]; Y)$ for some $Z^*$. 
That $Z^* = Z$, the solution to \eqref{EAS_WM}, can be seen by taking 
the limit as $\epsilon \to 0$ in the Duhamel formula \eqref{duhamel_formula}. 
Indeed, since $l \geq m \geq k+1 \geq n/2 + 3$, we have the pointwise convergence 
$\mathcal{N}(Z^{\epsilon}) \to \mathcal{N}(Z^*)$ so by the dominated convergence 
theorem and the continuity property of the heat semigroup, we conclude that 
\begin{equation*}
    \mathcal{F}[Z^*](t) = Z^*_0 + \int_0^t \mathcal{N}(Z^*(s)) ds 
\end{equation*}
which is the solution to \eqref{EAS_WM}.  
Further, since $Z^* = Z \in C([0,T]; Y)$ and $Y$ is dense in $H^{-k} \times H^{-l} \times H^{-m}$, 
the solution is weakly continuous, i.e. $Z \in C_w([0,T]; X)$. This concludes the 
local in time existence and uniqueness part of Theorem \ref{lwp}.

\subsection{Conditions for Continuation of the Solution}
\label{sec_cont_criterion}
Let us now establish the continuation criterion \eqref{cont_criterion}, which we will use to prove
conditional global existence for unidirectional flocks in Sections \ref{GWP_1D} and \ref{UniGWP}.  Recall 
the relevant energy estimates from Section \ref{energy_estimates}. 
\begin{align}
    \frac{d}{dt} &\|\rho\|_{\Hkdot}^2 \leq C (\|\nabla \u\|_{\infty} + \|\nabla \rho\|_{\infty} + \|\rho\|_{\infty}) Y_{m,l,k}  \label{rho_energy_estimate} \\
    \frac{d}{dt} &\| \wt \|_{\Hldot}^2 \lesssim \Big\| \frac{1}{\rho_{\phi}} \Big\|_{\infty}^{l+1} \|\u\|_{\infty} \| \wt \|_{\Hldot}^2 \label{w_energy_estimate} \\   
    \frac{d}{dt} &\|\u\|_{\Hmdot}^2 \lesssim \|\nabla \u\|_{\infty} \|\u\|_{\Hmdot}^2 + (\| \wt \|_{\infty} \|\u\|_{\infty} + \|\u\|_{\infty} \| \wt \|_{\Hmdot}) \|\u\|_{\Hmdot} \label{u_energy_estimate}
\end{align}
A sufficient continuation criterion is then given by 
\begin{equation*}
    \int_0^{T} \Big(\|\nabla \u\|_{\infty} + \|\rho\|_{\infty} + \|\nabla \rho\|_{\infty} +  \Big\|\frac{1}{\rho_{\phi}} \Big\|_{\infty}^{l+1}  \Big) ds < \infty
\end{equation*}
That is, the $X$-norm of the solution will not blow up for finite times provided this holds.  We can simplify this criterion 
to \eqref{cont_criterion} by showing that $\|\nabla \u\|_{\infty}$ and $\| 1/\rho_{\phi} \|_{\infty}$ 
control $\|\rho\|_{\infty}$ and $\|\nabla \rho\|_{\infty}$.  Indeed, assume that \eqref{cont_criterion} holds.
Then solving the continuity equation along characteristics, $\dot{X}(t,\alpha) = \u(t, X(t,\alpha))$, we get 
\begin{equation*}
    \rho(X(t, \alpha)) = \rho(0, \alpha) \exp \Big\{ -\int_0^T (\nabla \cdot \u) (s, X(s,\alpha)) ds \Big\}
\end{equation*}
From the integrability of $\|\nabla \u\|_{\infty}$ in \eqref{cont_criterion}, we obtain that $\|\rho\|_{\infty}$ is bounded. 
Further, boundedness of $\nabla \rho$ can be obtained similarly by differentiating the continuity equation.  
The equation for an arbitrary partial derivative $\partial \rho$ is given by
\begin{align*}
    \partial_t (\partial \rho) + \u \cdot \nabla \partial \rho + \partial \u \cdot \nabla \rho + (\nabla \cdot \u) \partial \rho + (\nabla \cdot \partial \u) \rho = 0
\end{align*}
Then 
\begin{equation*}
    \frac{d}{dt}\|\partial \rho\|_{\infty} 
        \leq \| \frac{d}{dt} \partial \rho \|_{\infty} 
        \leq \| \nabla \u\|_{\infty} \| \nabla \rho \|_{\infty} + \|\nabla \u\|_{\infty} \|\partial \rho\|_{\infty} + \| \nabla^2 \u\|_{\infty} \|\rho\|_{\infty} 
\end{equation*}
Summing over the partials, we get 
\begin{equation*}
    \frac{d}{dt}\|\nabla \rho\|_{\infty} 
        \lesssim \| \nabla \u\|_{\infty} \| \nabla \rho \|_{\infty} + \| \nabla^2 \u\|_{\infty} \|\rho\|_{\infty} 
\end{equation*}
To conclude by Gronwall, we need to show $\| \nabla^2 \u\|_{\infty}$ is bounded for finite times. 
Observe that $\| \nabla^2 \u\|_{\infty} \leq \|\u\|_{H^m}$ since $m > n/2 + 2$ so 
it suffices to bound $\|\u\|_{H^m}$. 
From the energy estimate \eqref{w_energy_estimate} on the $\wt$-equation, \eqref{cont_criterion} implies that $\| \wt \|_{H^l}$ is bounded
for finite times, and in turn the energy estimate \eqref{u_energy_estimate} on the $\u$-equation implies that $\|\u\|_{H^m}$ is 
bounded for finite times.

\subsection{Small Initial Data}
With the continuation criterion in hand, we will prove Theorem \ref{thm:small_data_intro}. The precise 
statement is given below in Theorem \ref{small_data}. 
Provided the kernel is bounded away from zero and the initial variations of $\u$ are sufficiently small, control 
of $\|\nabla \u\|_{\infty}$ can be established in any dimension.  The small variation of $\u$ allows 
the quadratic term in the equation for $\partial \u$ to be absorbed into the dissipative term. 
Intuitively speaking, the strength of the alignment force will overpower the Burger's transport term.  
Letting $\cA(t) = \max_{x,y \in \T} |\u(x) - \u(y)|$, we state the result.
\begin{theorem}
    \label{small_data}
    Assume (A1)-(A4). If in addition, the kernel $\phi$ is bounded below, $\phi \geq c_1 > 0$ (which implies (A5)), and the following smallness conditions: 
    \begin{align*}
        &\cA_0 < \epsilon^2, \hspace{5mm} \|\u_0\|_{\infty} < \epsilon, \hspace{5mm} \epsilon < \frac{c_1 \wt_- M}{2 + \eta  M \|\phi\|_{\infty} + \wt_+ M \|\nabla \phi\|_{\infty}}, \\
        &\eta = \|\nabla \wt_0\|_{\infty} \exp \Big\{ \frac{2 \|\phi\|_{\infty} \|\phi'\|_{\infty}}{M (\wt_-) c_1^3} \cA_0 \Big\}
    \end{align*}
    then there is a unique solution 
    $(\rho, \wt, \u) \in C_w([0,T]; (H^k \cap L^1_+) \times H^l \times H^m)$ to \eqref{EAS_WM}
    existing globally in time such that 
    \begin{equation*}
        \| \nabla \u \|_{\infty} < 2 \epsilon, \hspace{5mm} t > 0 
    \end{equation*}
\end{theorem}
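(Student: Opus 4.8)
The plan is to combine the continuation criterion \eqref{cont_criterion} with the $L^\infty$ alignment of Theorem \ref{thm:linf_alignment} and run a bootstrap on $\|\nabla \u\|_\infty$. First I would record the bounds that come for free from $\phi \geq c_1$: since $\rho_\phi(x) = \int \phi(x-y)\rho(y)\dy \geq c_1 M$ and mass is conserved, we get $\|1/\rho_\phi\|_\infty \leq 1/(c_1 M)$ uniformly in time, so the second term in \eqref{cont_criterion} is harmless and the whole criterion reduces to controlling $\int_0^T \|\nabla \u\|_\infty \ds$. The maximum principle gives $\|\u\|_\infty \leq \|\u_0\|_\infty < \epsilon$, and Theorem \ref{thm:linf_alignment} gives the unconditional decay $\cA(t) \leq \cA_0 e^{-\wt_- M c_1 t}$, whence $\int_0^\infty \cA(s)\ds \leq \cA_0/(\wt_- M c_1)$.

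Next I would control $\|\nabla \wt\|_\infty$ uniformly in time, and this is where the structure of the Favre field is essential. Differentiating the transport equation $\partial_t \wt + \u_F \cdot \nabla \wt = 0$ along the characteristics of $\u_F$ gives $\frac{d}{dt}\|\nabla \wt\|_\infty \leq \|\nabla \u_F\|_\infty \|\nabla \wt\|_\infty$. The key is a cancellation: adding and subtracting $\u_F(x)\nabla \rho_\phi$ one finds $\nabla \u_F(x) = \big(\int \nabla_x \phi(x-y)(\u(y)-\u_F(x))\rho(y)\dy\big)/\rho_\phi$, so the gradient of $\u_F$ sees only the velocity differences $\u(y)-\u_F(x)$ and therefore $\|\nabla \u_F\|_\infty \lesssim \cA$ (controlled by the amplitude, \emph{not} by $\|\nabla \u\|_\infty$). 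Since $\int_0^\infty \cA$ is finite by the previous step, Grönwall yields the uniform bound $\|\nabla \wt(t)\|_\infty \leq \eta$, with $\eta$ exactly the constant displayed in the statement. Crucially, this bound is unconditional -- it uses no a priori control on $\|\nabla \u\|_\infty$ -- so no circularity arises.

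The heart of the proof is the differential inequality for $\|\nabla \u\|_\infty$. Differentiating the velocity equation produces the Jacobian (Riccati) equation $\frac{D}{Dt}\nabla \u = -(\nabla \u)^2 - \wt \rho_\phi \, \nabla \u + R$, where the damping coefficient obeys $\wt \rho_\phi \geq \wt_- c_1 M$, and the remainder $R = (\nabla \wt) G + \wt \int \nabla \phi(x-y)(\u(y)-\u(x))\rho(y)\dy$ (with $G = \int\phi(x-y)(\u(y)-\u(x))\rho(y)\dy$) satisfies $|R| \leq \cA M(\eta \|\phi\|_\infty + \wt_+ \|\nabla \phi\|_\infty)$ once $\|\nabla \wt\|_\infty \leq \eta$ is in hand. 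A Rademacher-type argument for the supremum then gives
\begin{equation*}
    \frac{d}{dt}\|\nabla \u\|_\infty \leq \|\nabla \u\|_\infty^2 - \wt_- c_1 M \|\nabla \u\|_\infty + \cA M\big(\eta \|\phi\|_\infty + \wt_+ \|\nabla \phi\|_\infty\big).
\end{equation*}
Now I run the bootstrap: assuming $\|\nabla \u_0\|_\infty < 2\epsilon$ (part of the initial smallness) and that $\|\nabla \u\|_\infty < 2\epsilon$ on a maximal interval, I evaluate the right-hand side at the threshold $2\epsilon$; using $\cA \leq \cA_0 < \epsilon^2$ it is bounded by $\epsilon^2(4 + M\eta\|\phi\|_\infty + M\wt_+\|\nabla\phi\|_\infty) - 2\wt_- c_1 M \epsilon$, which is strictly negative under the smallness condition on $\epsilon$ in the hypotheses. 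Hence $\|\nabla \u\|_\infty$ can never reach $2\epsilon$, so $\|\nabla \u\|_\infty < 2\epsilon$ persists, $\int_0^T \|\nabla\u\|_\infty \ds \leq 2\epsilon T < \infty$ on every finite interval, \eqref{cont_criterion} holds, and the local solution of Theorem \ref{lwp} extends globally.

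The main obstacle is the cancellation estimate $\|\nabla \u_F\|_\infty \lesssim \cA$: without it, differentiating the weight transport would feed $\|\nabla \u\|_\infty$ into the exponent controlling $\|\nabla\wt\|_\infty$, producing a factor $\exp(\int_0^t \|\nabla \u\|_\infty \ds)$ that does not close (it would only be bounded by $e^{2\epsilon t}$, growing in time) and would destroy the time-independent bound $\eta$. Replacing $\u(y)$ by the difference $\u(y)-\u_F(x)$ is precisely what decouples the weight estimate from the velocity gradient and lets the whole scheme close with uniform constants. A secondary technical point is making the supremum-derivative rigorous for the matrix term $(\nabla \u)^2$ when $n>1$; one works at a point realizing the maximum of an appropriate matrix norm and uses $|(\nabla\u)^2| \leq \|\nabla\u\|_\infty^2$ so that the quadratic term is genuinely dominated by the linear damping.
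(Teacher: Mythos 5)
Your proposal is correct and follows essentially the same route as the paper: reduce the continuation criterion to control of $\|\nabla \u\|_{\infty}$ via $\rho_{\phi} \geq c_1 M$, exploit the cancellation in $\nabla \u_F$ (so that it is bounded by the amplitude $\cA(t)$ rather than by $\|\nabla \u\|_{\infty}$) to obtain the time-uniform bound $\|\nabla \wt\|_{\infty} \leq \eta$, and close a bootstrap on the damped Riccati inequality for $\nabla \u$; your barrier argument at the threshold $2\epsilon$ is an equivalent packaging of the paper's step of integrating the same differential inequality on the maximal interval where $\|\nabla\u\|_{\infty}<2\epsilon$. You are also right to flag that a smallness hypothesis on $\|\nabla \u_0\|_{\infty}$ is needed to start the bootstrap --- the paper's own proof invokes $\|\nabla \u_0\|_{\infty}<\epsilon$ even though the theorem statement only lists $\|\u_0\|_{\infty}<\epsilon$.
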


\begin{remark}
    \label{remark:small_data}
    The essential ingredients for the proof are exponential alignment in $L^{\infty}$ and a bound from below on $\rho_{\phi}$, 
    from which we can obtain a lower bound on the dissipative term in the equation for $\partial \u$ and a uniform bound on $\|\nabla \wt\|_{\infty}$, 
    which in turn controls another term in the equation for $\partial \u$.
    The quantity $\eta$ above denotes this uniform bound on $\|\nabla \wt\|_{\infty}$.
    For general kernels, $\|\nabla \wt\|_{\infty}$ may not be bounded. 
\end{remark}
\begin{remark}
    The conclusion $\|\nabla \u\|_{\infty} < 2 \epsilon$ for $t > 0$ can be bootstrapped to obtain exponential decay of $\|\nabla \u\|_{\infty}$. 
\end{remark}

\begin{proof}
    By assumption, $\rho_{\phi} \geq c_1 M > 0$. Then the continuation criterion \eqref{cont_criterion} 
    reduces to control of $\|\nabla \u\|_{\infty}$. 
    The equation for an arbitrary partial derivative $\partial \u$ is given by 
    \begin{equation}
        \label{eqn:partial_u}
        \partial_t (\partial \u) + \u \cdot \nabla \partial \u = 
            \partial \wt ( (\u\rho)_{\phi} - \u \rho_{\phi} )
            + \wt ( (\u\rho)_{\phi'} - \u \rho_{\phi'} )
            - \nabla \u \cdot \partial \u - \wt \rho_{\phi} \partial \u
    \end{equation}
    Then 
    \begin{equation*}
        \frac{d}{dt} \|\partial \u\|_{\infty} 
            \leq \big( \| \partial \wt \|_{\infty} M \|\phi\|_{\infty} + \wt_+ M \|\nabla \phi\|_{\infty} \big) \cA(t) 
            + \big( \|\nabla \u\|_{\infty} - c_1 \wt_- M  \big) \|\partial \u \|_{\infty}
    \end{equation*}
    The equation for $\partial \wt$ is given by
    \begin{equation*}
        \partial_t (\partial \wt) + \u_F \cdot \nabla \partial \wt = \partial \u_F \cdot \nabla \wt 
    \end{equation*}
    Therefore, in order to bound $\|\partial \wt\|_{\infty}$, the exponential decay of $\|\partial \u_F\|_{\infty}$ is sufficient.
    Due to $\rho_{\phi} \geq c_1 M$ and exponential alignment (Theorem \ref{thm:linf_alignment}), we have 
    \begin{align}
        \label{ineq:decay_u_F}
        |\partial \u_F|
            &= \Big| \frac{\rho_{\phi} (\u \rho)_{\phi'} - \rho_{\phi'} (\u\rho)_{\phi} }{\rho_{\phi}^2} \Big|
            = \Big| \frac{\rho_{\phi} \big( (\u \rho)_{\phi'} - \u \rho_{\phi'} \big) 
                + \rho_{\phi'} \big( (\u\rho)_{\phi} - \u \rho_{\phi} \big) }{\rho_{\phi}^2} \Big| \\
            &\leq \frac{2 \|\phi\|_{\infty} \|\phi'\|_{\infty}}{c_1^2} \cA(t) \leq \frac{2 \|\phi\|_{\infty} \|\phi'\|_{\infty}}{c_1^2} \cA_0 e^{-\wt_- Mc_1t} \nonumber
    \end{align}
    As a result, integrating the $\partial \wt$ equation, we obtain
    \begin{equation}
        \label{eqn:bd_on_w_x}
        \|\nabla \wt\|_{\infty} \leq \|\nabla \wt_0\|_{\infty} \exp \Big\{ \frac{2 \|\phi\|_{\infty} \|\phi'\|_{\infty}}{M (\wt_-) c_1^3} \cA_0 \Big\} := \eta 
    \end{equation}
    Now, given that $\mathcal{A}_0 < \epsilon^2$ (and therefore by alignment, 
    $\cA(t) \leq \epsilon^2$) and $\|\nabla \u_0\|_{\infty} < \epsilon$,
    let $[0,T)$ be the maximal interval of existence and let $[0, t^*)$ be the the maximal time interval on the interval existence
    for which $\|\nabla \u\|_{\infty} < 2 \epsilon$. 
    Let $a = \eta  M \|\phi\|_{\infty} + \wt_+ M \|\nabla \phi\|_{\infty}$ and $b = c_1 \wt_- M$. 
    Then 
    \begin{equation*}
        \frac{d}{dt} \|\partial \u\|_{\infty} 
            \leq a \epsilon^2 - (b - 2\epsilon) \|\partial \u \|_{\infty}, \hspace{5mm} t < t^*
    \end{equation*}
    Integrating, we obtain 
    \begin{equation*}
        \|\partial \u\|_{\infty} \leq  \|\partial \u_0\|_{\infty} + \frac{a \epsilon^2}{b-2\epsilon}
            \leq \epsilon + \frac{a \epsilon^2}{b-2\epsilon}
    \end{equation*}
    Fix $0 < \gamma < 1$. Provided $\epsilon < b(2\gamma - 1)/(4\gamma - 2 + a)$, $\|\partial \u\|_{\infty} < 2 \gamma \epsilon < 2 \epsilon$ for all $t < t^*$. 
    Thus, for small enough $\epsilon$, $t^* = T$ and by the continuation criterion, the solution can be continued beyond $T$, contradicting that it is the maximal time of existence. 
    Thus $T = \infty$.  
    This argument holds for all $\gamma < 1$ so Theorem \ref{small_data} follows. 
\end{proof}

\section{Global Well-posedness in 1D}
\label{GWP_1D}
In this section, 
we prove Theorem \ref{thm:UniGWP} in the 1D case first in order to illustrate the core of the argument before proceeding to the multi-D case, which is proved in Section \ref{UniGWP}. 
We will establish a threshold condition for global well-posedness in 1D 
in a similar manner to Carrillo, Choi, Tadmor and Tan in \cite{CCTT2016}.
The precise statement is as follows. 
\begin{theorem}
    \label{t:GWP_1D}
    Assume (A1)-(A5) and that the initial density is non-vacuous, i.e. $\rho_0 > c > 0$. 
    \begin{enumerate}[leftmargin=0.8cm, label = (\roman*)]
        \item If $e_0 = \partial_x u_0 + \wt_0 (\rho_0)_{\phi} \geq 0$, then $e$ remains positive for all $t>0$ and there's a unique solution 
        $(\rho, \wt, u) \in C_w([0,T]; (H^k \cap L^1_+) \times H^l \times H^m)$ to \eqref{EAS_WM}
        existing globally in time and satisfying the initial data. 
        \item If $e_0 = \partial_x u_0 + \wt_0 (\rho_0)_{\phi} < 0$, then $e$ approaches $-\infty$ in finite time and there is finite time blow-up 
        of the solution.
    \end{enumerate}
\end{theorem}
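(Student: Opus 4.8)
The plan is to exploit the conservation law \eqref{conservation_law}, exactly as Carrillo, Choi, Tadmor and Tan do for Cucker--Smale in \cite{CCTT2016}, the only new feature being that the strength $\st = \wt\rho_\phi$ is no longer $\rho_\phi$ but still obeys a maximum principle through the transport of $\wt$. The first observation is that $\rho$ and $e$ satisfy the \emph{same} continuity equation ($\partial_t \rho + \partial_x(u\rho)=0$ and $\partial_t e + \partial_x(ue)=0$ by \eqref{conservation_law}), so the ratio $q := e/\rho$ is transported,
\[
\partial_t q + u\,\partial_x q = 0 ,
\]
and therefore preserves sign along the flow $\dot X = u(t,X)$. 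Since $\rho_0 > c > 0$, the sign of $q_0$ coincides with that of $e_0$, which is precisely the dichotomy in the statement.

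For part (i), $e_0 \ge 0$ gives $q_0 \ge 0$, hence $q \ge 0$ and $e = q\rho \ge 0$ throughout the interval of existence. I would then record two independent bounds: the weight is transported along $\uF$, so $\wt_- \le \wt \le \wt_+$ by the maximum principle, and together with mass conservation $\rho_\phi \le \|\phi\|_\infty M$ this bounds the strength, $0 \le \st = \wt\rho_\phi \le \wt_+\|\phi\|_\infty M =: C_*$. Writing $\partial_x u = e - \st$ and using $e \ge 0$ yields the decisive one-sided bound $\partial_x u \ge -C_*$, which rules out the density-concentration (shock) mechanism. From here the remaining estimates cascade without circularity: the lower bound on $\partial_x u$ gives $\rho \le \|\rho_0\|_\infty e^{C_* t}$ via the Lagrangian density formula; this bounds $\partial_x u = e - \st \le e = q\rho \le q_+\rho$ from above; integrating the upper bound along characteristics keeps $\int_0^t \partial_x u\,ds$ finite, so $\rho$ stays bounded below away from zero on finite intervals; and then $\rho_\phi \ge \rho_-\|\phi\|_{L^1} > 0$ controls $\|1/\rho_\phi\|_\infty$. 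Every quantity in the continuation criterion \eqref{cont_criterion} is thus finite on each finite interval, and Theorem \ref{lwp} continues the solution globally.

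For part (ii), I would follow a single characteristic issued from a point where $e_0 < 0$. Along $\dot X = u$, the conservation law \eqref{conservation_law} gives the Riccati-type ODE
\[
\dot e = -e\,\partial_x u = -e(e - \st) = -e^2 + e\st .
\]
As long as $e < 0$ we have $e\st \le 0$ (since $\st \ge 0$), so $\dot e \le -e^2$; comparison with $\dot y = -y^2$, $y(0) = e_0 < 0$, forces $e \to -\infty$ no later than $t = 1/|e_0|$. Because $\st = \wt\rho_\phi \le C_*$ stays bounded while the solution is smooth, $\partial_x u = e - \st \to -\infty$, so $\|\partial_x u\|_\infty$ blows up and no global smooth solution can exist.

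The only genuinely delicate point is the book-keeping in part (i): one must check that the chain of bounds is acyclic (lower bound on $\partial_x u \Rightarrow$ upper bound on $\rho \Rightarrow$ upper bound on $\partial_x u \Rightarrow$ lower bound on $\rho \Rightarrow$ lower bound on $\rho_\phi$), so that no estimate is invoked before it is proved, and that positivity of $\rho$---needed both for the transport of $q$ and for the final lower bound---is maintained throughout. Everything else is a direct transcription of the Cucker--Smale argument, with $\rho_\phi$ replaced by the transported strength $\st = \wt\rho_\phi$, whose uniform bounds come for free from the maximum principles on $\wt$ and $\rho$.
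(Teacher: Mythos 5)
Your proposal is correct and follows essentially the same route as the paper: the conservation law \eqref{conservation_law} yields the logistic/Riccati ODE $\dot e = e(\wt\rho_\phi - e)$ along characteristics, giving the sign dichotomy, and in the case $e_0\ge 0$ the resulting bounds on $\partial_x u$ and on $1/\rho_\phi$ (using $\rho_0>c>0$) close the argument via the continuation criterion \eqref{cont_criterion}. The only cosmetic difference is that you obtain positivity and the upper bound of $e$ through the transported ratio $q=e/\rho$, whereas the paper reads both directly off the logistic ODE (using the uniform bound $\st=\wt\rho_\phi\le \wt_+\|\phi\|_\infty M$) and reserves $q$ for the sharper $1/(1+t)$ lower bound on $\rho$; both versions are sound.
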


\begin{proof}
    By the continuation criterion \eqref{cont_criterion}, it suffices to control $\|\partial_x u\|_{\infty}$ and $\inf \rho$. 
    By design, the entropy, $e = \partial_x u + \wt \rho_{\phi}$, in 1D is conserved. We have 
    \begin{equation*}
        \partial_t e + \partial_x(u e) = 0 
    \end{equation*}
    Written along characteristics, we have the ODE 
    \begin{equation*}
        \dot{e} = e(\wt \rho_{\phi} -e)
    \end{equation*}
    Provided $0 < \wt_0 < \infty$, the logistic ODE on $e$ implies 
    \begin{enumerate}[leftmargin=0.8cm, label = (\roman*)]
        \item if $e_0 \geq 0$ then $e(t) > 0$ and it remains bounded. 
        \item if $e_0 < 0$ then $\dot{e} \leq -e^2$ so $e$ blows up. 
    \end{enumerate}
    Therefore we have the threshold condition: if $e_0 < 0$, the solution blows up; 
    but if $e_0 \geq 0$, then $\partial_x u$ remains bounded by some constant $C > 0$. 
    Writing the $\rho$-equation along characteristics, we get 
    \begin{equation*}
        \dot{\rho} = -\partial_x u \rho
    \end{equation*}
    which implies that, along characteristics, 
    \begin{equation*}
        \rho \geq \rho_0 e^{-\int_0^t \|\partial_x u\|_{\infty} ds} = \rho_0 e^{-Ct}
    \end{equation*}
    Due to the assumed non-vacuous initial density, $\rho_0 > c > 0$, this is enough to conclude that $1/\rho_{\phi}$ is bounded above and 
    hence there is global well-posedness via the continuation criterion \eqref{cont_criterion}.  However, the lower bound 
    can be improved to be of order $1/(1+t)$. Let us include the argument for the sake of 
    optimality. It may be relevant to future flocking results. 
    Observe that $e$ and $\rho$ satisfy the same transport equation and a result, 
    the quantity $\frac{e}{\rho}$ is transported. 
    \begin{equation*} 
       \partial_t \big( \frac{e}{\rho} \big) + u \partial_x \big(\frac{e}{\rho} \big) = 0
    \end{equation*}
    In particular, letting $C = \frac{e_0}{\rho_0}$, we have $e \leq C \rho$. 
    Substituting $-\partial_x u = w\rho_{\phi} - e$ in the characteristic equation for $\rho$, we get 
    \begin{equation*}
        \dot{\rho} = (\wt \rho_{\phi} - e) \rho \geq -C \rho^2 
    \end{equation*}
    Consequently, $\dot{(1/\rho)} \leq C$ and integrating we obtain 
    $1/\rho \leq 1/\rho_0 + C t$ along characteristics. Again, due to the non-vacuous initial density, we 
    obtain an upper bound on $1/\rho_{\phi}$ with linear in time growth. 
    In particular, $1/\rho_{\phi}$ is bounded on any finite time interval 
    and, by the continuation criterion, any local solution for which $e_0 \geq 0$ can be extended to any time interval.
\end{proof}

\section{Limiting Density Profile}
\label{sec:limiting_density_profile}
With the conditions for alignment and global well-posedness in 1D at hand, two natural questions arise.
Is there a limiting density distribution for the flock?  If so, what does the limiting density profile look like?  
In the 1D Cucker-Smale case with a heavy tail kernel and $e_0 > 0$, 
the former is answered in the affirmative in \cite{Sbook}; 
the latter is answered partially by establishing an estimate on its 
deviation from the uniform distribution in \cite{LS-entropy}.  We extend these results to the $W$-model. 
In particular, we prove Theorem \ref{thm:strong_flocking_intro} and Theorem \ref{thm:entropy_estimate_intro}.

\subsection{Strong Flocking in 1D} 
\label{sec:strong_flocking}
The solution flocks strongly if there is alignment of the velocities as well as convergence of the density $\rho$ to a limiting distribution $\rho_{\infty}$. 
This can be established in 1D provided $e_0 > 0$ and there is exponential alignment of the velocities, which according to Theorem \ref{thm:linf_alignment}, necessitates 
the bound from below $\phi \geq c_1$ (in particular, $\rho_{\phi} \geq c_1 M$). The strict positivity of $e_0$ guarantees 
dissipation in the $\partial_x u$ equation, which is crucial for strong flocking.  Indeed,
let $u_{\infty}$ denote the limiting velocity.  If there were a limiting density profile, it must be that the time derivative of the density 
in the moving frame with coordinates $x' = x - u_{\infty}t$, $t' = t$ is approaching zero sufficiently fast. 
Examining the equation for the density in the moving frame, 
\begin{equation}
    \label{eq:rho_moving_frame}
    \partial_{t'} \rho + (u - u_{\infty}) \partial_{x'} \rho + (\partial_{x'} u) \rho = 0
\end{equation}
we see that boundedness of $\rho$ and $\partial_x \rho$ along with sufficiently fast decay of $\partial_x u$ is 
sufficient for strong flocking.  
\begin{remark}
    In the case of small data, Theorem \ref{small_data}, the smallness of the initial variation of $u$ led 
    to a dissipative term in the equation for $\partial u$.  Here, we replace the small data assumption with $e_0 > 0$, 
    which, in 1D, also leads to a dissipative term in the equation for $\partial u$.
\end{remark}
\begin{proof}[Proof of Theorem \ref{thm:strong_flocking_intro}]
    First, note that if $e_0 \geq c_2 > 0$, then it remains bounded from below.  
    Indeed, along characteristics, $\dot{e} = e(\wt \rho_{\phi} - e)$ is non-negative 
    whenever $0 \leq e \leq \wt \rho_{\phi}$.  Therefore, $e \geq \min\{ c_2, \wt \rho_{\phi} \} := c$.  
    Let $E(t)$ denote a generic exponentially decaying 
    quantity, which may vary from line to line. From the equation for $\partial_x u$ and $e > c > 0$, we have 
    \begin{equation*}
        \frac{d}{dt} \|\partial_x u\| \leq \big( \wt_+ + \| \partial_x \wt \|_{\infty} \big) E(t) - c \|\partial_x u\|_{\infty} 
    \end{equation*}
    The exponential decay of $u_F$ (given that $\phi$ is bounded below away from zero) was shown in estimate\eqref{ineq:decay_u_F}. 
    As a result, $\|\partial_x \wt\|$ is bounded and $\|\partial_x u\|_{\infty}$ is exponentially decaying. 
    
    Turning to the second derivative, the equation for $\partial^2_x u$ is given by 
    \begin{align*}
        \partial_t (\partial_x^2 u) + u \cdot \partial^3_x u
        &= \partial_x^2 \wt ( (u\rho)_{\phi} - u \rho_{\phi} )
            + 2 \partial_x \wt ( (u\rho)_{\phi'} - u \rho_{\phi'} )  \\
        &+ \wt ( (u\rho)_{\phi''} - u \rho_{\phi''} ) 
            - 2\partial_x(\wt\rho_{\phi})) \partial_x u
            - 2 (\partial_x u) \partial_x^2 u
            - e \partial^2_x u
    \end{align*}
    To control the first term, we will show that $\|\partial^2_x \wt\|_{\infty}$ is bounded. 
    The equation for $\partial^2_x \wt$ is given by 
    \begin{equation*}
        (\partial_t + u_F \partial_x)(\partial^2_x \wt) = - 2(\partial_x u_F) \partial^2_x \wt + (\partial_x^2 u_F) \partial_x \wt 
    \end{equation*}
    We claim that $\partial^2_x u_F$ is exponentially decaying.  Indeed, 
    \begin{align*}
        \partial^2_x u_F 
            &= \frac{(u \rho)_{\phi''}} {\rho_{\phi}} - \frac{\rho_{\phi''} (u\rho)_{\phi}} {\rho_{\phi}^2}
            - 2 \frac{\rho_{\phi'} (u\rho)_{\phi'}} {\rho_{\phi}^2} + 2\frac{\rho_{\phi'}^2 (u\rho)_{\phi}} {\rho_{\phi}^3} \\
            &:= A_1 - A_2 - B_1 + B_2
    \end{align*}
    By the exponential alignment \eqref{thm:linf_alignment}, we have 
    \begin{equation*}
        A_1 - A_2 
            = \frac{ \rho_{\phi} ((u \rho)_{\phi''} - u \rho_{\phi''}) - \rho_{\phi''} ((u\rho)_{\phi} - u\rho_{\phi})} {\rho_{\phi}^2}
            \leq E(t) \\
    \end{equation*}
    Similarly, 
    \begin{equation*}
        B_2 - B_1
            = 2 \frac{\rho_{\phi'}^2 ((u\rho)_{\phi} - u\rho_{\phi}) - \rho_{\phi} \rho_{\phi'} ((u\rho)_{\phi'} - u\rho_{\phi'}) } {\rho_{\phi}^3}
            \leq E(t)
    \end{equation*}
    In total, we gather that 
    \begin{equation*}
        \frac{d}{dt} \| \partial^2_x \wt \|_{\infty} \leq E(t) + E(t) \|\partial^2_x w\|_{\infty}
    \end{equation*}
    In particular, $\|\partial^2_x \wt\|_{\infty}$ is bounded.  Returning to $\partial_x^2 u$, we obtain 
    \begin{equation*}
        \frac{d}{dt} \| \partial_x^2 u \|_{\infty} \leq E(t) + (E(t) - c) \|\partial^2_x u\|_{\infty} 
    \end{equation*}
    Thus $\|\partial^2_x u\|_{\infty}$ is exponentially decaying. 
    With exponential decay of $\|\partial_x u\|_{\infty}$ and $\|\partial^2_x u\|_{\infty}$ in hand, 
    boundedness of $\rho$ and $\partial_x \rho$ follows. The former follows from the 
    continuity equation and latter from 
    \begin{equation*}
        \partial_t (\partial_x \rho) = -u \partial^2_x \rho - 2 (\partial_x u) (\partial_x \rho) - (\partial^2_x u) \rho 
    \end{equation*}
    so that 
    \begin{equation*}
        \partial_t \|\partial_x \rho\|_{\infty} \leq E(t) + E(t) \|\partial_x \rho\|_{\infty}
    \end{equation*}
    Integrating gives a uniform bound on $\|\partial_x \rho\|_{\infty}$. 
    Now, from the density equation in the moving frame \eqref{eq:rho_moving_frame} along 
    with exponential alignment, we have 
    \begin{equation*}
        \partial_t \rho = E(t) 
    \end{equation*}
    In particular, $\rho(t, x)$ is Cauchy in time, uniformly in $x$, so there exists a limiting function 
    $\rho_{\infty}(x)$.  Moreover, the exponential decay of $\partial_t \rho$ implies exponential convergence of $\rho(t,x)$ to $\rho_{\infty}(x)$ in $L^{\infty}$.  
\end{proof}

\subsection{Relative Entropy Estimate and Distribution of Limiting Flock in 1D}
\label{sec:entropy}
In this section, we prove Theorem \ref{thm:entropy_estimate_intro}.
We aim to estimate the $L^1$ distance of the limiting flock to the uniform distribution, $\bar{\rho} = M/2\pi$:
\begin{equation*}
    \limsup_{t \to \infty} \|\rho(t) - \bar{\rho} \|_{L^1} 
        \leq  \Big( \tilde{Q} +  \|\phi\|_{\infty} (\wt_+ - \wt_-) \Big) \frac{M \wt_+ \|\phi\|_{\infty}}{c(\wt_+ \|\phi\|_{L^1} - \tilde{Q})} 
\end{equation*}
Recall that $\tilde{e} = \partial_x u + \cL_{\phi} \rho$, where $\cL_{\phi} \rho = \wt(x) \int_{\T} (\rho(y) - \rho(x)) \phi(x-y) \dy$ 
and $\tilde{q} = \frac{\tilde{e}}{\rho}$; and it is assumed that $\tilde{q} \leq \tilde{Q}$ for some constant $\tilde{Q}$. 
We remark on the conditions for such a constant $\tilde{Q}$ to exist. 
\begin{remark}
    \label{rmk:bd_on_q_tilde}
    The boundedness of $\|\tilde{q}\|_{\infty} \leq \tilde{Q} < \wt_+ \|\phi\|_{L^1}$ for some constant $\tilde{Q}$ is 
    satisfied when $e = \partial_x u + \wt \rho_{\phi}$, the kernel $\phi$, and the weight $\wt$ are bounded away from zero, i.e. $\phi \geq c_1 > 0$, $e_0 \geq c_2 > 0$, 
    $\wt \geq \wt_- > 0$ and there is a smallness assumption on the derivative of the initial weight and/or the initial variation of the velocity. Indeed, 
    if $e_0 > 0$ then it remains bounded away from zero for all time. 
    Since $\tilde{q}$ satisfies $\partial_t \tilde{q} + u \partial_x \tilde{q} = \partial_x \wt (u - u_F)$
    and the kernel is bounded away from zero, 
    we have exponential alignment and, as a result, $\|\partial_x \wt\|_{\infty}$ remains bounded 
    by $\|\partial_x \wt_0\|_{\infty} \exp \Big\{ \frac{2 \|\phi\|_{\infty} \|\phi'\|_{\infty}}{M (\wt_-) c_1^3} \cA_0 \Big\}$, 
    where $\cA(t) = \max_{(x,y) \in \T^2} |u(t,x) - u(t,y)|$.  This was shown in \eqref{eqn:bd_on_w_x}. 
    As a result,
    \begin{align*}
        \|\tilde{q}\|_{\infty} 
            &\leq \|\tilde{q}_0\|_{\infty}  +  \| \partial_x \wt \|_{\infty}  \frac{\cA_0}{ M c_1 \wt_-} \\
            &\leq \|\tilde{q}_0\|_{\infty}  +   \partial_x \|\wt_0\|_{\infty} \exp \Big\{ \frac{2 \|\phi\|_{\infty} \|\phi'\|_{\infty}}{M (\wt_-) c_1^3} \cA_0 \Big\} \frac{\cA_0}{ M c_1 \wt_-}
    \end{align*} 
    We see that small values of $\|\partial_x \wt_0\|_{\infty}$ or $\cA_0$ or $\frac{1}{\wt_-}$ are sufficient 
    to achieve $\|\tilde{q}\|_{\infty} < \wt_+ \|\phi\|_{L^1}$.
\end{remark}
The proof relies on an estimate of the \textit{relative} entropy $\cH = \int_{\T} \rho \log \frac{\rho}{\bar{\rho}}$ in order to achieve the desired estimate. 
This is to be distinguished from the entropy $e$. 
\begin{proof}[Proof of Theorem \ref{thm:entropy_estimate_intro}]
    By the Csisz\'{a}r-Kullback inequality Lemma \ref{lma:ck_inequality}, to control $\|\rho - \bar{\rho}\|_1$, it suffices to control $\cH = \int_{\T} \rho \log \frac{\rho}{\bar{\rho}} \dx$, 
    We have 
    \begin{equation*}
        \partial_t (\rho \log \rho) 
            = -[u (\rho \log \rho)]' - \rho u' 
            = -[u (\rho \log \rho)]' - \rho \tilde{e} - \rho \cL\rho
    \end{equation*}
    So that, 
    \begin{align*}
        \frac{d \cH}{dt} 
            &= \frac{d}{dt} \int_{\T} \rho \log \rho \dx  \\
            &= \int_{\T} (\rho - \bar{\rho}) \tilde{e} \dx - \int_{\T^2} \rho(x) \wt(x) (\rho(y) - \rho(x)) \phi(x-y) \dy \dx \\
            &= -\int_{\T} (\rho - \bar{\rho}) \tilde{e} \dx - \bar{\rho} \int_{\T} \tilde{e} \dx + \int_{\T^2} \rho(x) \wt(x) (\rho(y) - \rho(x)) \phi(x-y) \dy \dx \\
            &:= I_1 + I_2 + I_3
    \end{align*}
    Symmetrizing the $I_3$ term, we obtain 
    \begin{align*}
        I_3 &= -\frac{1}{2} \int_{\T^2} \wt(x) |\rho(x) - \rho(y)|^2 \phi(x-y) \dy \dx 
                - \frac{1}{2} \int_{\T^2} \rho(y) (\wt(x) - \wt(y)) (\rho(x) - \rho(y)) \phi(x-y) \dy \dx \\
            &\leq -\frac{1}{2} \wt_- \int_{|x-y| \leq r_0} |\rho(y) - \rho(x)|^2 \dy \dx 
                + \frac{1}{2} \|\rho\|_{\infty} \|\phi\|_{\infty} (\wt_+ - \wt_-) \int_{\T^2} |\rho(x) - \bar{\rho} - (\rho(y) - \bar{\rho})| \dy \dx \\
            &\leq -\frac{1}{2} \wt_- \int_{|x-y| \leq r_0} |\rho(y) - \rho(x)|^2 \dy \dx 
                + 2\pi \|\rho\|_{\infty} \|\phi\|_{\infty} (\wt_+ - \wt_-) \|\rho - \bar{\rho}\|_{L^1}  \\
    \end{align*}
    By Lemma 2.1 of \cite{LS-entropy}, we have, for some positive constant $c := c(r_0)$, 
    \begin{equation*}
        \leq -c \wt_- \|\rho - \bar{\rho}\|_{L^2}^2
                + 2\pi \|\rho\|_{\infty} \|\phi\|_{\infty} (\wt_+ - \wt_-) \|\rho - \bar{\rho}\|_{L^1}  
    \end{equation*}
    In the remainder, $c$ may change from line to line, but it will remain solely dependent on $r_0$. Symmetrizing $I_2$, we have
    \begin{align*}
        I_2 &= \bar{\rho}  \int_{\T^2} \wt(x) (\rho(y) - \rho(x)) \phi(x-y) \dy \dx \\
            &= \frac{1}{2} \bar{\rho}  \int_{\T^2} (\wt(y) - \wt(x)) (\rho(y) - \rho(x)) \phi(x-y) \dy \dx \\
    \end{align*} 
    Using $\bar{\rho} \leq \|\rho\|_{\infty}$, we obtain the same estimate as the non-dissipative term in $I_3$.
    \begin{equation*}
        |I_2| \leq 2\pi \|\rho\|_{\infty} (\wt_+ - \wt_-) \|\phi\|_{\infty} \|\rho - \bar{\rho}\|_{L^1} 
    \end{equation*}
    For $I_1$, we have 
    \begin{align*}
        |I_1| &= \big| \int_{\T} (\rho - \bar{\rho}) \rho \tilde{q} \dx \big| \\
              &= \|\rho\|_{\infty} \|\tilde{q}\|_{\infty} \|\rho - \bar{\rho}\|_{L^1}
    \end{align*}
    Combining these estimates with the the Csisz\'{a}r-Kullback inequality, we obtain 
    \begin{align*}
        \frac{d \cH}{dt} 
            &\leq \Big( \|\rho(t)\|_{\infty} \|\tilde{q}(t)\|_{\infty} + 4\pi \|\rho(t)\|_{\infty} \|\phi\|_{\infty} (\wt_+ - \wt_-) \Big) \|\rho(t) - \bar{\rho}\|_{L^1} - c \wt_- \|\rho(t) - \bar{\rho}\|_{L^2}^2 \\
            &\leq \Big( \|\rho(t)\|_{\infty} \|\tilde{q}(t)\|_{\infty} + 4\pi \|\rho(t)\|_{\infty} \|\phi\|_{\infty} (\wt_+ - \wt_-) \Big) \sqrt{4 \pi \bar{\rho} \cH(t)} - c \wt_- \bar{\rho} \cH(t)  
    \end{align*}
    Letting $Y = \sqrt{\cH}$, we obtain via Gronwall,   
    \begin{align*}
        Y(t) &\leq Y_0 e^{-c \wt_- \bar{\rho}t}  
        + \sqrt{\pi \bar{\rho}} \int_0^t  \|\rho(s)\|_{\infty} \|\tilde{q}(s)\|_{\infty}  e^{-c \bar{\rho}(t-s)} ds  \\       
        &+ 4\pi \sqrt{\pi \bar{\rho}} \|\phi\|_{\infty} (\wt_+ - \wt_-) \int_0^t  \|\rho(s)\|_{\infty}  e^{-c \bar{\rho}(t-s)} ds  \\       
    \end{align*}
    To relate it back to $\|\rho - \bar{\rho}\|_{L^1}$, we multiply both sides of the inequality by $\sqrt{4\pi \bar{\rho}}$ 
    and apply the Csisz\'{a}r-Kullback inequality again.  Combining this with the following elementary fact: 
    for a bounded function $a(t)$ and a constant $b$, $\limsup_{t \to \infty} \int_0^t a(s) e^{-b(t-s)} ds \leq \frac{1}{b} \limsup_{t \to \infty} a(t)$,
    we obtain (since $\rho$ is bounded by Lemma \ref{lma:bd_on_rho})
    \begin{align*}
        \limsup_{t \to \infty} \|\rho(t) - \bar{\rho}\|_{L^1} 
            &\leq \frac{1}{c} \limsup_{t \to \infty} \|\rho(t)\|_{\infty} \|\tilde{q}(t)\|_{\infty} \\
            &+ \frac{1}{c} \|\phi\|_{\infty} (\wt_+ - \wt_-) \limsup_{t \to \infty} \|\rho(t)\|_{\infty} \\
    \end{align*}
    Applying Lemma \ref{lma:bd_on_rho} to bound $\limsup_{t \to \infty} \|\rho(t)\|_{\infty}$ gives the result.  
\end{proof}

\begin{lemma}
    \label{lma:bd_on_rho}
    If there exists a constant $\tilde{Q}$ such that $\|\tilde{q}\|_{\infty} \leq \tilde{Q} < \wt_+ \|\phi\|_{L^1}$, then 
    \begin{equation*}
        \limsup_{t \to \infty} \|\rho(t)\|_{\infty} \leq \frac{M \wt_+ \|\phi\|_{\infty}}{\wt_+ \|\phi\|_{L^1} - \|\tilde{q}\|_{\infty}}
    \end{equation*}
\end{lemma}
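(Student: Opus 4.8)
The plan is to reduce the claim to a scalar logistic (Riccati) differential inequality for the quantity $R(t):=\|\rho(t)\|_{\infty}$ and then compare it with the associated autonomous ODE. The solution is smooth on the torus by the well-posedness theory, so the supremum of $\rho(t,\cdot)$ is attained at some $x_t$, and by \RL\ the function $R$ is locally Lipschitz with $\tfrac{d}{dt}R(t)=\partial_t\rho(t,x_t)$ for a.e.\ $t$. At an interior maximum one has $\partial_x\rho(t,x_t)=0$, so the continuity equation $\partial_t\rho=-u\,\partial_x\rho-\rho\,\partial_x u$ collapses to
\begin{equation*}
    \frac{d}{dt}R(t)=-R(t)\,\partial_x u(t,x_t),\qquad \text{a.e. } t.
\end{equation*}

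Next I would rewrite the velocity gradient at $x_t$ in terms of the quantities in the statement. From $\tilde e=\partial_x u+\cL_{\phi}\rho$ with $\cL_{\phi}\rho=\wt\,(\rho_{\phi}-\|\phi\|_{L^1}\rho)$ and $\tilde q=\tilde e/\rho$, one has
\begin{equation*}
    \partial_x u=\tilde q\,\rho-\wt\big(\rho_{\phi}-\|\phi\|_{L^1}\rho\big),
\end{equation*}
and substituting at $x_t$ gives the exact relation
\begin{equation*}
    \frac{d}{dt}R=-\tilde q(x_t)R^{2}+\wt(x_t)\,\rho_{\phi}(x_t)\,R-\wt(x_t)\|\phi\|_{L^1}R^{2}.
\end{equation*}
The essential structural feature is that the self-interaction at the peak produces a \emph{quadratic damping} term $-\wt\|\phi\|_{L^1}R^2$, while the only genuinely nonlocal term is the middle one involving $\rho_{\phi}(x_t)$.

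Now I would bound each term by time-independent constants. Conservation of mass (the continuity equation integrates to a constant $M$) yields the pointwise estimate $\rho_{\phi}\le\|\phi\|_{\infty}M$, so the middle term is at most $\wt_+\|\phi\|_{\infty}M\,R$; the first term is at most $\|\tilde q\|_{\infty}R^{2}$; and the weight is controlled by its extrema (which are preserved in time since $\wt$ is transported along $\u_F$). This produces a logistic differential inequality
\begin{equation*}
    \frac{d}{dt}R\ \le\ M\wt_+\|\phi\|_{\infty}\,R-\big(\wt_+\|\phi\|_{L^1}-\|\tilde q\|_{\infty}\big)R^{2}.
\end{equation*}
The hypothesis $\|\tilde q\|_{\infty}<\wt_+\|\phi\|_{L^1}$ is \emph{exactly} what makes the quadratic coefficient strictly positive, so the comparison ODE $\dot y=\alpha y-\beta y^{2}$ with $\alpha=M\wt_+\|\phi\|_{\infty}$ and $\beta=\wt_+\|\phi\|_{L^1}-\|\tilde q\|_{\infty}>0$ has a globally attracting positive equilibrium $y^{*}=\alpha/\beta$. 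Since $R\ge 0$ and $R$ satisfies $\dot R\le f(R)$ with $f$ a polynomial (hence locally Lipschitz), the scalar comparison principle gives $R(t)\le y(t)$ for the solution $y$ with $y(0)=R(0)$, and $y(t)\to y^{*}$; therefore
\begin{equation*}
    \limsup_{t\to\infty}\|\rho(t)\|_{\infty}\ \le\ y^{*}=\frac{M\wt_+\|\phi\|_{\infty}}{\wt_+\|\phi\|_{L^1}-\|\tilde q\|_{\infty}},
\end{equation*}
as claimed.

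The main obstacle, and the only real content beyond bookkeeping, is handling the nonlocal convolution $\rho_{\phi}$: one must bound its positive contribution crudely by the mass bound $\|\phi\|_{\infty}M$ while \emph{retaining} the quadratic self-damping $-\wt\|\phi\|_{L^1}R^2$ generated at the peak, and the interplay of these two is precisely the logistic structure. The remaining technical points are standard but must be stated: justifying $\tfrac{d}{dt}\|\rho\|_{\infty}=\partial_t\rho(\cdot,x_t)$ via attainment of the maximum on the compact torus together with \RL, and noting that the $\limsup$ (rather than a uniform-in-time) bound is forced because $R(0)$ may initially exceed the equilibrium $y^{*}$.
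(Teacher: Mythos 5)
Your proof is essentially identical to the paper's: both evaluate the continuity equation at the maximizer of $\rho$, rewrite $\partial_x u = \tilde q\,\rho - \wt\big(\rho_{\phi} - \|\phi\|_{L^1}\rho\big)$, bound $\rho_{\phi} \leq M\|\phi\|_{\infty}$, and close a logistic differential inequality whose positive equilibrium yields the stated $\limsup$ bound. One caveat you share with the paper's own computation: the damping term $-\wt(x_t)\|\phi\|_{L^1}R^2$ is negative, so its legitimate upper bound is $-\wt_-\|\phi\|_{L^1}R^2$ rather than $-\wt_+\|\phi\|_{L^1}R^2$, which means the coefficient $\wt_+\|\phi\|_{L^1}-\|\tilde q\|_{\infty}$ in the hypothesis and in the final quotient should strictly be $\wt_-\|\phi\|_{L^1}-\|\tilde q\|_{\infty}$ unless $\wt$ is constant.
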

\begin{proof}
    Let $x_+$ denote the maximizer of $\rho(t)$. That is, $\rho_+(t) = \rho(t, x_+)$. From the continuity equation, we have 
    \begin{align*}
        \frac{d}{dt} \rho_+(t) 
            &= -\rho_+(t) \partial_x u(t, x_+) = -\rho_+(t) (\tilde{e} - \cL_{\phi} \rho) \\
            &= -\rho_+(t)^2 \tilde{q}(t, x_+) + \rho_+(t) \wt(t, x_+) \int_{\T} \phi(x_+ - y) (\rho(t,y) - \rho_+(t)) \dy \\
            &\leq  (\tilde{Q} - \wt_+ \|\phi\|_{L^1}) \rho_+(t)^2 + M \wt_+ \|\phi\|_{\infty} \rho_+(t) \\
            &=  (\wt_+ \|\phi\|_{L^1} - \tilde{Q}) \rho_+(t) \bigg[ \frac{M \wt_+ \|\phi\|_{\infty}}{\wt_+ \|\phi\|_{L^1} - \tilde{Q}} - \rho_+(t) \bigg] \\
    \end{align*}
    Observe that if $\dot{X}(t) \leq A X(t) [B - X(t)]$ where $A,B > 0$ are constants and $X(t) > 0$, 
    then 
    \begin{equation*}
        X(t) \leq \frac{BX(0)} {X(0) + (B - X(0)) \exp(-ABt)}
    \end{equation*}
    Applying this differential inequality and taking $t \to \infty$ gives the result. 
\end{proof}

\section{Unidirectional Flocks}
\label{UniGWP}
In this section, we prove Theorem \ref{thm:UniGWP} in full.  As in the 1D case, Theorem \ref{t:GWP_1D},
the $e$-quantity is used to control the gradient of the velocity.  The difference here is that the full gradient 
needs to be controlled.  Let us recall the definition of unidirectional flocks \eqref{e:uniintro}. 
A flock is unidirectional if it has the form 
\begin{equation*}
    \u(x,t) = u(x,t) \bd, \hspace{8mm} \bd \in \mathbb{S}^{n-1}, \hspace{2mm} \u: \T^n \times \R^+ \to \R
\end{equation*}
for all time $t$.  The precise statement to be proved is as follows. 
\begin{theorem}
    \label{t:uni}
    Assume (A1)-(A5), the initial density is non-vacuous, i.e. $\rho_0 > c > 0$, and that $\u_0$ is unidirectional in the direction $\bd$. 
    \begin{enumerate}[leftmargin=0.8cm, label = (\roman*)]
        \item If $e_0 = \nabla u_0 \cdot \bd + \wt_0 (\rho_0)_{\phi} \geq 0$, then there's a unique solution 
          $(\rho, \wt, \u) \in C_w([0,T]; (H^k \cap L^1_+) \times H^l \times H^m)$ to \eqref{EAS_WM}
          existing globally in time and satisfying the initial data. 
        \item If $e_0 = \nabla u_0 \cdot \bd + \wt_0 (\rho_0)_{\phi} < 0$, then there is finite time blow-up 
        of the solution.
    \end{enumerate}
\end{theorem}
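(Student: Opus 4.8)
The plan is to reduce the system to its scalar unidirectional form, use the conserved $e$-quantity to control the directional derivative $\bd\cdot\nabla u$ exactly as in the 1D Theorem~\ref{t:GWP_1D}, and then run a hierarchy of Gronwall estimates to control the \emph{transverse} derivatives of $\u$, which the $e$-quantity does not see. Write $\partial_{\bd}:=\bd\cdot\nabla$ and $\st:=\wt\rho_\phi$. First I would record that the unidirectional ansatz is preserved: if $\u_0=u_0\bd$ then $\u=u\bd$ makes the transport term, the alignment force, and the Favre field $\u_F=u_F\bd$ (with $u_F=(u\rho)_\phi/\rho_\phi$) all parallel to $\bd$, so \eqref{EAS_WM} collapses to the scalar system for $(\rho,\wt,u)$ with velocity equation $\partial_t u+u\,\partial_{\bd}u=\st(u_F-u)$; since this furnishes a genuine solution of \eqref{EAS_WM}, the uniqueness part of Theorem~\ref{lwp} forces the actual solution to stay of this form. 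Differentiating the scalar velocity equation along $\bd$ and using the transport law $\partial_t\st+\partial_{\bd}(\st\,u_F)=0$ (valid because $\rho_\phi$ and $\st$ obey the same continuity equation) reproduces the computation of the introduction with $\partial_x$ replaced by $\partial_{\bd}$, giving $\partial_t e+\partial_{\bd}(ue)=0$ for $e=\partial_{\bd}u+\st$, i.e.\ $\dot e=e(\st-e)$ along $\dot X=u\bd$.

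From here part (ii) is immediate: since $\wt\le\wt_+$ and $\rho_\phi\le M\|\phi\|_\infty$ keep $\st$ bounded for as long as the solution exists, $e_0<0$ gives $\dot e\le -e^2$, so $e\to-\infty$ in finite time and, $\st$ staying bounded, $\partial_{\bd}u\to-\infty$, which rules out continuation in $H^m$. For part (i), $e_0\ge 0$ yields $0\le e\le\max(e_0,\wt_+M\|\phi\|_\infty)$ from the logistic ODE, so $\partial_{\bd}u=e-\st$ is bounded. Feeding the resulting upper bound on $\partial_{\bd}u$ into $\dot\rho=-(\partial_{\bd}u)\rho$, and using that $e/\rho$ is transported (hence $e\le C\rho$) to sharpen the rate exactly as in Theorem~\ref{t:GWP_1D}, gives a lower bound on $\rho$ that decays at most linearly; in particular $\|1/\rho_\phi\|_\infty$ is finite on every finite interval. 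It then remains only to bound $\|\nabla u\|_\infty$ in order to invoke the continuation criterion \eqref{cont_criterion}.

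The genuinely new part is the transverse control, which I would organize as a triangular hierarchy so that no estimate depends circularly on $\|\nabla u\|_\infty$. The maximum principle bounds the spread $\cA(t):=\max_{x,y}|u(t,x)-u(t,y)|\le 2\|u_0\|_\infty$. Expanding $\nabla u_F$ as in \eqref{ineq:decay_u_F} and recognizing the numerator as a combination of the alignment integrals $(u\rho)_{\phi'}-u\rho_{\phi'}$ and $(u\rho)_\phi-u\rho_\phi$, each $\lesssim M\cA\max(\|\phi\|_\infty,\|\phi'\|_\infty)$, the lower bound on $\rho_\phi$ makes $\|\nabla u_F\|_\infty$ finite, indeed integrable, on finite intervals. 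Differentiating the weight equation gives $(\partial_t+u_F\partial_{\bd})\nabla\wt=-(\nabla u_F)\,\partial_{\bd}\wt$, so Gronwall bounds $\|\nabla\wt\|_\infty$; consequently $\|\nabla\st\|_\infty\le\|\nabla\wt\|_\infty M\|\phi\|_\infty+\wt_+M\|\phi'\|_\infty$ is controlled. Finally, for an arbitrary partial $\partial$,
\[
(\partial_t+u\partial_{\bd})(\partial u)=-e\,\partial u+(\partial\st)(u_F-u)+\st\,\partial u_F,
\]
and the damping term $-e\,\partial u$ has the favorable sign since $e\ge 0$, so it is harmless at the maximum of $|\partial u|$ and $\tfrac{d}{dt}\|\nabla u\|_\infty\le\|\nabla\st\|_\infty\,\cA+\wt_+M\|\phi\|_\infty\,\|\nabla u_F\|_\infty$, whose right-hand side is already under control. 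Thus $\|\nabla\u\|_\infty$ stays finite on finite intervals, and with $\|1/\rho_\phi\|_\infty$ likewise finite the continuation criterion \eqref{cont_criterion} promotes the local solution to a global one.

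I expect the transverse bootstrap to be the main obstacle: unlike the one-dimensional setting, the conserved $e$ controls only $\partial_{\bd}u$, and one must lean on the boundedness of $\wt$ (hence of $\st$ and, through it, of $\nabla\st$) together with the alignment structure of $\nabla u_F$ to prevent the transverse part of $\nabla\u$ from blowing up. The delicate points are establishing the lower bound on $\rho_\phi$ before everything else (it underlies every subsequent estimate through $\nabla u_F$) and arranging the estimates in an order that avoids any circular dependence on $\|\nabla u\|_\infty$.
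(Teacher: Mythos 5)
Your proposal is correct and follows essentially the same route as the paper: reduce to the scalar unidirectional system, use the logistic ODE for $e=\partial_{\bd}u+\st$ to control $\partial_{\bd}u$ and hence $\inf\rho$, then bootstrap in the same non-circular order (lower bound on $\rho_\phi$ $\Rightarrow$ bound on $\nabla u_F$ via the alignment structure $\Rightarrow$ Gronwall for $\nabla\wt$ $\Rightarrow$ the $-e\,\partial u$-damped equation for an arbitrary partial $\partial u$), closing with the continuation criterion \eqref{cont_criterion}. The only cosmetic differences are that you justify persistence of unidirectionality by the ansatz-plus-uniqueness argument where the paper invokes the maximum principle componentwise, and you run a single Gronwall on $\|\nabla\wt\|_\infty$ where the paper first treats $\partial_1\wt$ and then the remaining partials.
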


\begin{proof}
    By the continuation criterion \eqref{cont_criterion}, it suffices to control $\|\nabla \u\|_{\infty}$ and $\inf \rho$. 
    To do so, we will write the 1D system for the component nonzero component $u$ in order to exploit the $e$-quantity as in 
    Theorem \ref{t:GWP_1D}. First, note that the maximum principle applied to each direction implies that the solution $\u$ remains unidirectional for all time; 
    and by rotation invariance of \eqref{EAS_WM}, we can assume WLOG that $\bd = x_1$. 
    The velocity then takes the form $\u = (u(x,t), 0, \dots, 0)$ for all time and the system \eqref{EAS_WM} can be written 
    \begin{equation*}
        \label{SM_uni}
        \begin{cases}
            \partial_t \rho + \partial_1 (u\rho) = 0 \\
            \partial_t \wt + [u]_{\rho} \partial_1 \wt = 0\\
            \partial_t u + u \partial_1 u = \wt ((u\rho)_{\phi} - u\rho_{\phi})
        \end{cases}
    \end{equation*}
    The entropy equation is given by
    $$
        e = \partial_1 u + \wt \rho_{\phi}, \hspace{5mm} \partial_t e + \partial_1(ue) = 0 
    $$
    Written along characteristics, we recover the same ODE from the 1D case.  
    \begin{equation*}
        \dot{e} = e(\wt \rho_{\phi}-e)
    \end{equation*}
    As a result, we obtain the same threshold condition. 
    \begin{enumerate}[leftmargin=0.8cm, label = (\roman*)]
        \item if $e_0 \geq 0$ then $e(t) > 0$ bounded. 
        \item if $e_0 < 0$ then $\dot{e} \leq -e^2$ so $e$ blows up. 
    \end{enumerate}
    Considering the case $e_0 \geq 0$, the bound $1/\rho \leq 1/\rho_0 + C_1 t$ along characteristics follows a similar argument to the 1D case. 
    Turning to $\nabla u$, we write the equation for a generic partial derivative $\partial u$. 
    \begin{align*}
        (\partial_t + u \partial_1) \partial u 
            &= -(\partial_1 u)(\partial u) + \partial \big( \wt((u\rho)_{\phi} - u \rho_{\phi}) \big) \\
            &= -e (\partial u) + \partial \wt ((u\rho)_{\phi} - u \rho_{\phi}) + \wt ((u\rho)_{\phi'} - u \rho_{\phi'}) \\
    \end{align*}
    Multiplying by $\partial u$ and taking the supremum over the support of $\rho$ and using that $e \geq 0$, 
    we get  
    \begin{align*}
        \frac{d}{dt} \|\partial u\|_{L^{\infty}}  
            &\leq \|\partial \wt\|_{\infty} \|(u\rho)_{\phi} + u \rho_{\phi}\|_{\infty} + \|\wt ((u\rho)_{\phi'} - u \rho_{\phi'}) \|_{\infty} \\
            &\lesssim \|u\|_{\infty} (\wt_+ + \|\partial \wt\|_{\infty})
    \end{align*}
    It remains to bound $\|\partial \wt\|_{\infty}$. We have 
    \begin{equation*}
        (\partial_t + u_F \partial_1) \partial \wt + (\partial u_F)  (\partial_1 \wt) = 0 
    \end{equation*}
    In the case that $\partial = \partial_1$, solving along characteristics, we have 
    \begin{align*}
        |\partial_1 \wt|
            &= \big| (\partial_1 \wt_0) \exp \Big\{ - \int_0^t \partial_1 u_F ds \Big\} \big| \\
            &\leq |\partial_1 \wt_0|
                \exp \Big\{ \int_0^t C \Big\| \frac{1}{\rho_{\phi}} \Big\|_{\infty}^2 \|u\|_{\infty} ds \Big\} \\
            &\leq |\partial_1 \wt_0|
                \exp \Big\{ \int_0^t C t^2 \|u\|_{\infty} ds \Big\}
    \end{align*}
    For an arbitrary partial derivative along characteristics, we obtain 
    \begin{align*}
        |\partial \wt|
            &= \big| \partial \wt_0 \int_0^t (\partial u_F) (\partial_1 \wt) ds \big|  \\
            &\leq |\partial \wt_0| \int_0^t Ct^2 \|u\|_{\infty}  |\partial_1 \wt_0|
            \exp \Big\{ \int_0^s C t^2 \|u\|_{\infty} dr \Big\} ds 
    \end{align*}
    which is bounded for finite times. Moreover $\|\partial u\|_{\infty}$ is bounded for finite times for an arbitrary partial derivative $\partial$.
    In other words, the full gradient $\|\nabla \u\|_{\infty}$ is bounded for finite times. 
    The continuation criterion implies that a local solution can be extended to any finite time interval.
\end{proof}

\section{Numerical Simulation Plots and Description}
    \label{description_of_numerics} 
    To illustrate that the $\st$-model with Motsch-Tadmor 
    initial data ($\wt_0 = (1/\rho_0)_{\phi}$) possesses similar qualitative features 
    to the Motsch-Tadmor model, we provide numerical solution plots for these two cases 
    and for the Cucker-Smale case for comparison.  
    The general scheme is a linearized Finite Element discretization 
    in space with conforming elements and semi-implicit backward Euler finite differences in time. 
    The stability and error analysis for this numerical scheme are not known.  
    However, we provide evidence of error convergence in Section \ref{appdx:convg_experiment}
    by plotting the error between a known solution and the numerical solution as the mesh parameters go to zero. 
    Before writing the variational problem, let us describe the discretization of the torus and the numerical solution spaces. 
    
    \subsection{Local polynomial vector spaces and variational problem}
    The discretization of the torus is a uniform partition of the interval $[0,1]$ into $M$ pieces of size $h = 1/M$, 
    where the point $0$ is identified with the point $1$. We will refer each subinterval of size $h$ as an element.  
    The numerical solutions will lie in discrete finite element-based vector spaces with local 3rd and 2nd order local polynomial basis functions, 
    which we denote $P_3$ and $P_2$, respectively. 
    We provide details of the construction of the basis functions for $P_3$; the construction of the basis functions for $P_2$ is analogous. 
    There are $3M + 1$ nodes (for $P_2$, there are $2M + 1$ nodes) placed uniformly over the unit interval.  For each node, there will be 
    a corresponding basis function (which is a piecewise 3rd order polynomial) with support only in nearby elements whose value is equal to $1$ at the given node and $0$ at nearby nodes.  
    To describe the basis functions associated to each node, it is convenient to describe the basis functions whose support intersects a given element. 
    To that end, suppose the given element is $[0,1]$ (the basis functions with support in element $[0,1]$ can easily be adapted 
    to any element by shifting and scaling, which is described later).  The four nodes, placed at positions $0$, $1/3$, $2/3$, and $1$, correspond to 
    four basis functions, which on the interval $[0,1]$, have the form $\psi_k(x) = a_{k0} + a_{k1} x + a_{k2} x^2 + a_{k3} x^3$, $k = 0$ to $3$. 
    The coefficients are chosen so that the $\psi_k(x)$ is equal to $1$ at the node $k/3$ and $0$ at the other three nodes.  In particular, 
    the coefficients are given by the solution to the matrix equation,
    \begin{equation*}
        \begin{bmatrix}
            1 & 0 & 0 & 0 \\
            1 & 1/3 & 1/3^2 & 1/3^3 \\
            1 & 2/3 & (2/3)^2 & (2/3)^3 \\
            1 & 1 & 1 & 1 \\
        \end{bmatrix} 
        \begin{bmatrix}
            a_{00} & a_{01} & a_{02} & a_{03} \\
            a_{10} & a_{11} & a_{12} & a_{13} \\
            a_{20} & a_{21} & a_{22} & a_{23} \\
            a_{30} & a_{31} & a_{32} & a_{33} \\
        \end{bmatrix} 
        = 
        \begin{bmatrix}
            1 & 0 & 0 & 0 \\
            0 & 1 & 0 & 0 \\
            0 & 0 & 1 & 0 \\
            0 & 0 & 0 & 1 \\
        \end{bmatrix}
    \end{equation*} 
    Now let us describe how the entire basis functions are formed from these $\psi_k$.
    Let $v_k^*$, $k = 0$ to $3$, be the basis function associated to node at position $k/3$. 
    We will choose $v_k^*$ so that they are continuous at the boundary of the element 
    (however, there is no continuity of the derivatives of the boundary of the element).
    In particular, the $v_k^*$ are constructed using $\psi_k$ as follows. 
    We simply choose $v_1^* = \psi_1 \mathcal{X}_{[0,1]}$ and $v_2^* = \psi_2 \mathcal{X}_{[0,1]}$. 
    Continuity at the boundary of the element holds by the construction of $\psi_1$, $\psi_2$. 
    The functions $\psi_0$ and $\psi_3$ are equal to $1$ at the boundary of the element.  To retain 
    continuity at the boundary, let $\psi_k'$, $\psi_k''$ denote the basis functions with support
    in the left adjacent element $[-1,0]$ and the right adjacent element $[1,2]$, respectively 
    (that is, $\psi_k'(x) = \psi_k(x+1)$ and $\psi_k''(x) = \psi_k(x-1)$). 
    Then $v_0^* = \psi_3' \mathcal{X}{[-1,0]} + \psi_0 \mathcal{X}{[0,1]}$ and $v_3^* = \psi_3 \mathcal{X}{[0,1]} + \psi_0'' \mathcal{X}{[1,2]}$.  
    A general basis function $v_k$ on the mesh with $M$ elements of size $h$ is obtained by shifting and scaling the $v_k^*$'s. 
    For instance, consider the $i^{th}$ element $[\frac{i}{M}, \frac{i+1}{M}]$, $0 \leq i \leq M-1$, on a mesh of $M$ elements. 
    The four basis functions associated to the nodes on this element are given by $v_k^i = v_k^*(M(x - i/M))$, $k = 0$ to $3$. 
    The trial and test function spaces for $P_3$ are both equal and we denote them by $\{ v_k \}_{k=1}^{3M+1}$, as there are $3M+1$ nodes. 
    Similarly, the trial and test function spaces for $P_2$ are both equal and follow a similar construction.  
    We denote the trial and test functions for $P_2$ by $\{ q_k \}_{k=1}^{2M+1}$.

    Now let $(\rho^n, \wt^n, u^n) \in (P_3, P_3, P_2)$ be the numerical solution at the $n^{th}$ time step. 
    Then for some coefficients $b_k^n$, $(b^n_k)'$, and $c_k^n$ and $(v_k, q_k) \in (P_3, P_2)$. 
    \begin{equation*}
        \rho^n(x) = \sum_{k=0}^{3M + 1} b_k^n v_k, \hspace{5mm}
        \wt^n(x) = \sum_{k=0}^{3M + 1} (b^n_k)' v_k, \hspace{5mm}
        u^n(x) = \sum_{k=0}^{2M + 1} c_k^n q_k  
    \end{equation*}
    Let $V_I, Q_I$ be the interpolant operators on $P_3$ and $P_2$, respectively. 
    Given initial data $(\rho_0, \wt_0, u_0)$, we set $(\rho^0, \wt^0, u^0) = (V_I \rho_0, V_I \wt_0, Q_I u_0)$; 
    and we set $\phi_h = V_I \phi$. 
    To obtain the solutions at the next time step, we solve the following variational problem. 
    For all test functions $(v, q) \in (P_3, P_2)$, 
    \begin{align}
        \label{eqn:numerical_scheme}
        \begin{cases}
            \frac{1}{k}  \lan \rho^{n+1} - \rho^n, v \ran  -  \lan \rho^{n+1} u^n, \frac{d}{dx} v \ran  =  0 \\ 
            \frac{1}{k}  \lan \wt^{n+1} - \wt^n, v \ran    +   \lan (\frac{d}{dx} \wt^{n+1}) \frac{(u^n \rho^n)_{\phi_h}}{\rho^n_{\phi_h}}, v  \ran  = 0 \\ 
            \frac{1}{k}  \lan u^{n+1} - u^n, q \ran  +  \lan u^{n+1} \frac{d}{dx} u^n, q \ran   =    \lan \wt^n (u^n \rho^n)_{\phi_h}, q \ran -  \lan \wt^n u^{n+1} \rho^n_{\phi_h}, q \ran
        \end{cases}
    \end{align}
    For the original Motsch-Tadmor model, the weight is set to $1/(\rho^n)_{\phi}$.  The variational problem is given by 
    \begin{align}
        \label{eqn:numerical_scheme_MT}
        \begin{cases}
            \frac{1}{k}  \lan \rho^{n+1} - \rho^n, v \ran  -  \lan \rho^{n+1} u^n, \frac{d}{dx} v \ran  =  0 \\ 
            \frac{1}{k}  \lan u^{n+1} - u^n, q \ran  +  \lan u^{n+1} \frac{d}{dx} u^n, q \ran   =    \lan \frac{1}{(\rho^n)_{\phi}} (u^n \rho^n)_{\phi_h}, q \ran -  \lan \frac{1}{(\rho^n)_{\phi}} u^{n+1} \rho^n_{\phi_h}, q \ran
        \end{cases}
    \end{align}
    The term $\lan \rho^{n+1} u^n, \frac{d}{dx} v \ran$ in both variational forms is obtained via integration by parts. 
    The spaces $P_3$, $P_2$, the assembly of the variational problem, and the numerical solutions to \eqref{eqn:numerical_scheme} were computed with 
    the aid of the FENICS software library \cite{LoggWells2010} \cite{LoggEtal_10_2012}. 

    \begin{remark}
        The choice $(P_3, P_3, P_2)$ was chosen purely heuristically in order to resemble the inf-sup stability condition for the Stokes equation. However, it has not been proven that these spaces satisfy the inf-sup condition for the system \eqref{eqn:numerical_scheme}.
    \end{remark}

    \subsection{Comparison of Cucker-smale and $\st$-model with Motsch-Tadmor initial data}
    \label{appdx:comparison_CS_WM}
    Numerical solution plots for the solution to 
    \eqref{eqn:numerical_scheme} in the Cucker-Smale case ($\wt_0 = 1$) and for the $\st$-model with Motsch-Tadmor initial data ($\wt_0 = 1/(\rho_0)_{\phi}$) 
    are given in tables \ref{plots:solns_CS} and \ref{plots:solns_WM_MT}.  
    The numerical solution plots for the solution to \eqref{eqn:numerical_scheme_MT}, for the original Motsch-Tadmor model, is given in table \ref{plots:solns_MT}.
    The parameters for all three cases are given in \ref{appdx:parameters}.

    \begin{table}[!h]
        \begin{center}
                \begin{tabular}{cc}
                    \includegraphics[width=0.47\linewidth]{\detokenize{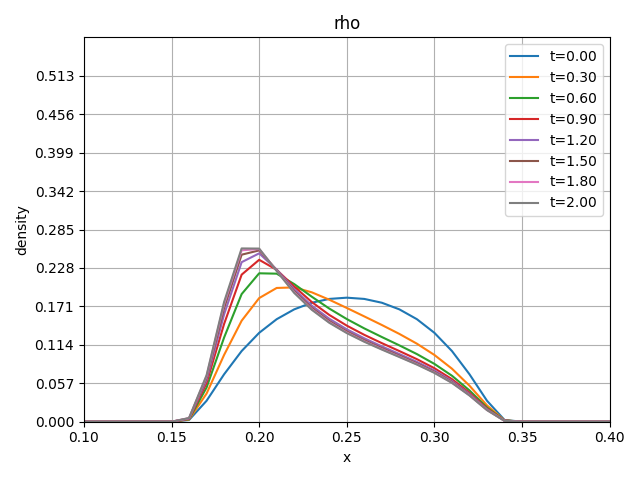}} &
                    \includegraphics[width=0.47\linewidth]{\detokenize{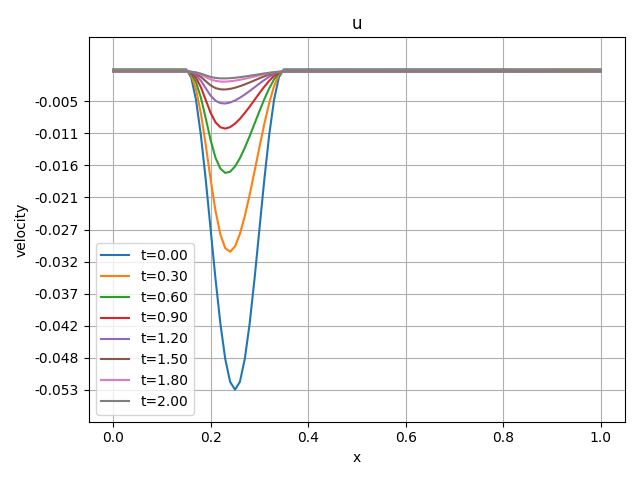}} 
                \end{tabular}
        \end{center} 
        \caption{The computed solution densities zoomed into the small flock and the computed velocities for the Cucker-Smale case.}
        \label{plots:solns_CS}
    \end{table}

    \begin{table}[!h]
        \begin{center}
            \begin{tabular}{cc}
                \includegraphics[width=0.47\linewidth]{\detokenize{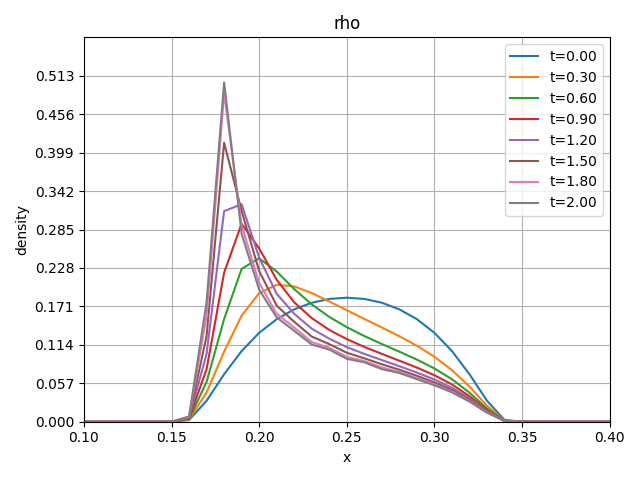}} &
                \includegraphics[width=0.47\linewidth]{\detokenize{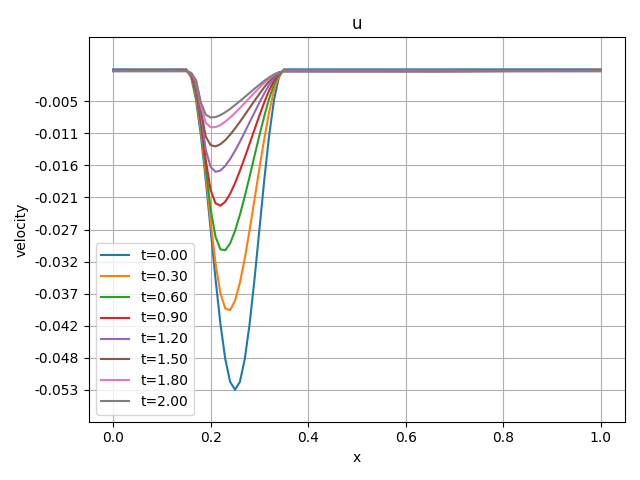}}
            \end{tabular}
        \end{center} 
        \caption{The computed solution densities zoomed into the small flock and the computed velocities for the $\st$-model with Motsch-Tadmor initial data.}
        \label{plots:solns_WM_MT}
    \end{table}
    
    \begin{table}[!h]
        \begin{center}
             \begin{tabular}{cc}
                 \includegraphics[width=0.47\linewidth]{\detokenize{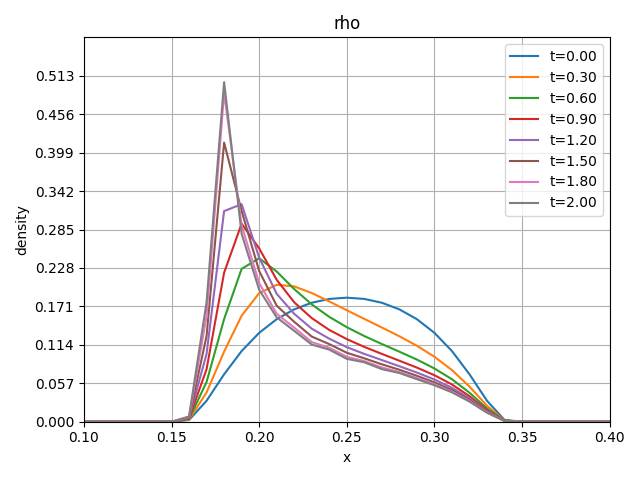}} &
                 \includegraphics[width=0.47\linewidth]{\detokenize{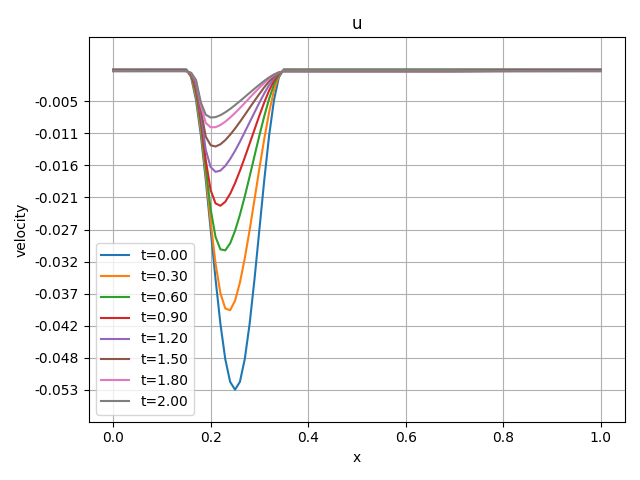}}
             \end{tabular}
         \end{center} 
         \caption{The computed solution densities zoomed into the small flock and the computed velocities for the Motsch-Tadmor case.}
         \label{plots:solns_MT}
     \end{table}

    In the Cucker-Smale case, there is rapid decay of the velocity of the small flock (i.e. rapid alignment to the large flock's velocity) and, 
    as a result, less movement in the density of the small flock.  Conversely, in the case of $\st$-model with Motsch-Tadmor initial data and for the 
    original Motsch-Tadmor model, the velocity decays at a slower rate so there is more movement in the density of small flock. 
    The point we are highlighting here is that the dynamics of the small flock in the Cucker-Smale case gets overpowered 
    by the large flock, while in the case of the $\st$-model with Motsch-Tadmor initial data (and, of course, in the original Motsch-Tadmor model as stipulated in \cite{MT2011}), it does not. 

    \begin{remark}
        The $\st$-model and Motsch-Tadmor cases appear to have identical plots.  This is due to the fact that 
        the large flock remains almost stationary and, as a result, $\rho_{\phi}$ is almost stationary.  
        Even though the differences of the solutions are not perceptible, the Motsch-Tadmor model
        does not have global well-posedness analysis, unlike the $\st$-model as we demonstrated across the paper.
    \end{remark}

    \begin{table}[!h]
        \begin{center}
            \begin{tabular}{cc}
                \includegraphics[width=0.39\linewidth]{\detokenize{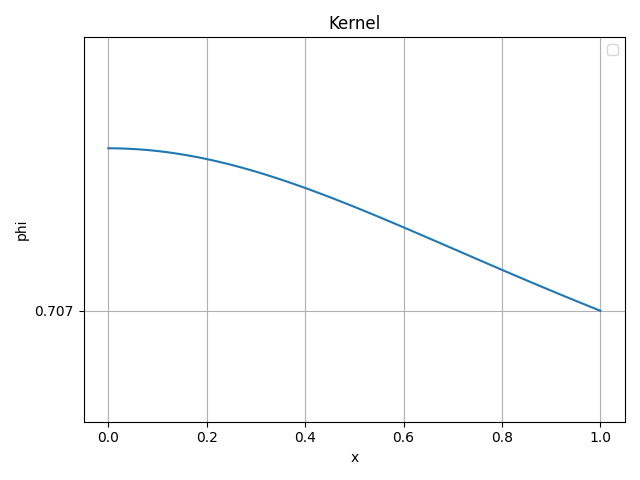}} &
                \includegraphics[width=0.39\linewidth]{\detokenize{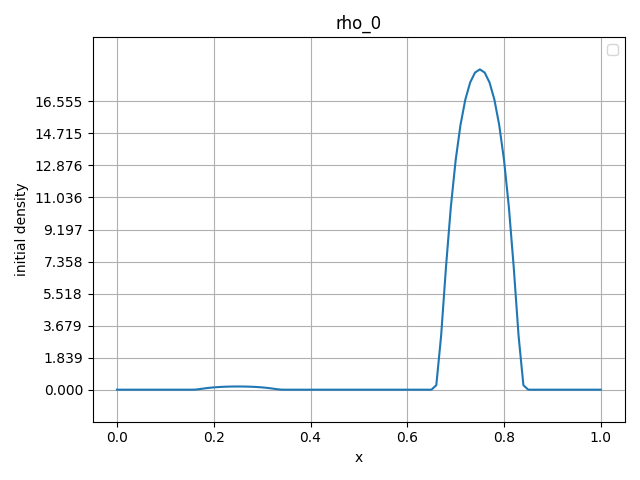}} \\
                \includegraphics[width=0.39\linewidth]{\detokenize{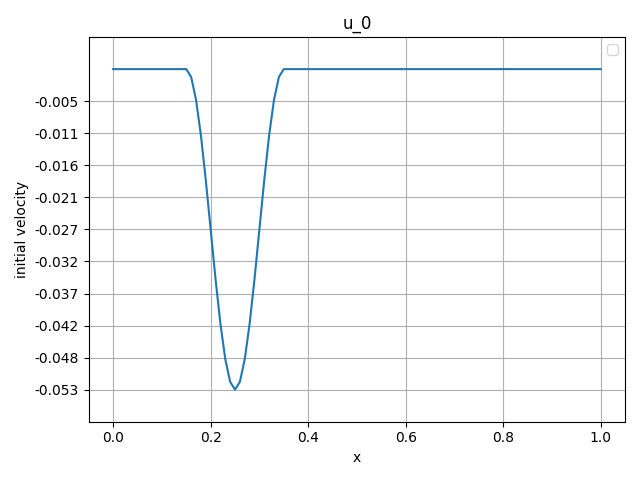}} &
                \includegraphics[width=0.39\linewidth]{\detokenize{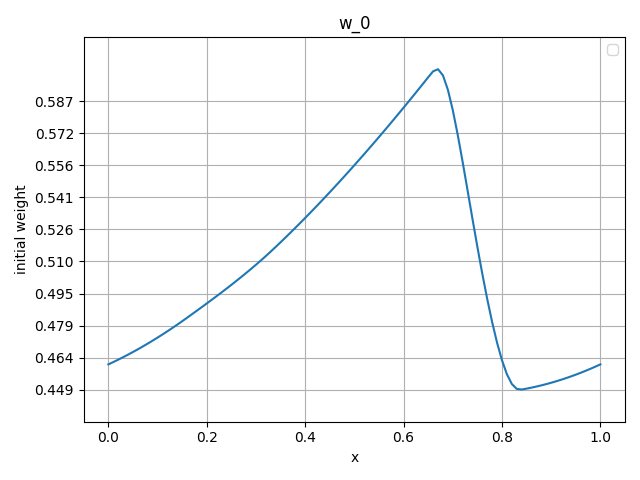}} 
            \end{tabular}
        \end{center} 
        \caption{The kernel $\phi$, initial density $\rho_0$, initial weight for the case of Motsch-Tadmor initial data $\wt_0 = 1/(\rho_0)_{\phi}$ and initial velocity $u_0$.  
        Note that these graphs are at different scales.}
        \label{plots:kernel_and_initial_data}
    \end{table}

    \subsection{Parameters and Initial Data for the Simulation}
    \label{appdx:parameters}
    In the numerical experiments shown described in \ref{appdx:comparison_CS_WM}, 
    the final time $T$ is equal to $2$, the number of time steps is $40$ (i.e. the temporal mesh size $k = 2 / 40$), and the number of mesh elements is $M = 100$ (i.e. $h = 1/100$). 
    The initial density comprises of a small mass flock and a large mass flock.  
    The initial velocity gives the small flock a negative velocity and the large flock a zero velocity.
    The kernel and initial data are shown in table \ref{plots:kernel_and_initial_data} and are given 
    explicitly by 
    \begin{enumerate} 
        \item[] \[ 
            \rho_0(x) = \begin{cases} 
                    \frac{1}{2} \exp \Big\{-\frac{1}{1 - (10*(x-0.25)^2)} \Big\} \hspace{8mm} \text{if } 0.15 < x < 0.35 \\
                    50 \exp \Big\{-\frac{1}{1 - (10*(x-0.75)^2)} \Big\} \hspace{8mm} \text{if } 0.65 < x < 0.85 \\
                    0 \hspace{8mm} \text{otherwise} 
                \end{cases}
           \]
        \item[] \begin{equation*}
            u_0(x) = \begin{cases} 
                    \frac{1}{12 \pi} \cos(10 \pi (x - 0.15)) - \frac{1}{12 \pi} \hspace{8mm} \text{if } 0.15 < x < 0.35 \\
                    0 \hspace{8mm} \text{otherwise}
                \end{cases}
            \end{equation*}
        \item[] \begin{equation*}
            \phi(x) = \frac{1}{(1+x^2)^{1/2}}
        \end{equation*}
    \end{enumerate} 
    The constant of $1/(12\pi)$ was chosen to guarantee that $e_0 = \partial_x u_0 + \wt_0 (\rho_0)_{\phi} > 0$ (so the solution will not blow up, see 
    Section \ref{GWP_1D} for details on this threshold condition). 
    In the Cucker-Smale simulation $\wt_0 = 1$ (and therefore remains $1$ for all time). 
    In the case of Motsch-Tadmor initial data, $\wt_0 = 1/(\rho_0)_{\phi}$.

        \subsection{Numerical Convergence Experiment}
        \label{appdx:convg_experiment}
        The well-posedness and error analysis of the numerical scheme is not analyzed here.  
        Instead, we provide evidence of convergence to a true solution as $k, h \to 0$.  
        Observe that $\rho(t,x) = 1 + \sin(t)$, $\wt(t,x) = \sin(t) + \frac{1}{2\pi} (2 + \sin(2\pi x))$, 
        and $u(t,x) = \sin(t) + \frac{1}{2\pi} \sin(2\pi x)$ is a solution to the $\st$-model system with a forcing given by 
        \begin{align}
            \label{eqn:wm_with_forcing}
            \begin{cases}
                \partial_t \rho + \nabla \cdot (u\rho) = \cos(t) + \sin(t) \cos(2 \pi x) + \cos(2 \pi x) \\
                \partial_t \wt + u_F \cdot \nabla \wt = \cos(t) + \sin(t) \cos(2 \pi x) \\
                \partial_t u + u \cdot \nabla u  = \wt ((u \rho)_{\phi} - u \rho_{\phi}) + 
                \cos(t)  +  \sin(t) \cos(2 \pi x)  +  \frac{1}{4 \pi} \sin(4 \pi x)   +   \big( \sin(t)  +  \frac{1}{\pi}  +  
                \\\frac{1}{2  \pi} \sin(2 \pi x) \big)  \big( \frac{1}{2 \pi}  \sin(2 \pi x) + \frac{1}{2 \pi}  \sin(t)  \sin(2 \pi x) \big) 
            \end{cases}
        \end{align}    
        The corresponding variational problem with a forcing $f = (f_1, f_2, f_3)$ is given by 
        \begin{align}
            \label{eqn:numerical_scheme_with_forcing}
            \begin{cases}
                \frac{1}{k}  \lan \rho^{n+1} - \rho^n, v \ran     -   \lan \rho^{n+1} u^n, \frac{d}{dx} v \ran    =   \lan f_1, v \ran   \\ 
                \frac{1}{k}  \lan \wt^{n+1} - \wt^n, v \ran   +  \lan (\frac{d}{dx} \wt^{n+1}) \frac{(u^n \rho^n)_{\phi_h}}{\rho^n_{\phi_h}}, v  \ran     =   \lan f_2, v \ran  \\ 
                \frac{1}{k}  \lan u^{n+1} - u^n, q \ran   +  \lan u^{n+1} \frac{d}{dx} u^n, q \ran   =   \lan \wt^n (u^n \rho^n)_{\phi_h}, q \ran -  \lan \wt^n u^{n+1} \rho^n_{\phi_h}, q \ran + \lan f_3, q \ran
            \end{cases}
        \end{align}    
        We will provide evidence that the numerical solution to \eqref{eqn:numerical_scheme_with_forcing}, with forcing $f$ equal to the right hand side of \eqref{eqn:wm_with_forcing}, 
        converges to the solution to \eqref{eqn:wm_with_forcing}, which in turn provides evidence that the original numerical scheme \eqref{eqn:numerical_scheme} is a 
        (conditionally) stable and convergent scheme.  Convergence is measured in the $H^1$ norm.  
        
        To distinguish the numerical solutions, let $(\rho_h, \wt_h, u_h)$ denote the solutions to \eqref{eqn:numerical_scheme}. 
        For a given numerical solution, we compute, at a specified time $T$, the $L^2$ error and the $L^2$ error of the gradient seperately.  
        Here, we assume that $T$ coincides with one of the discrete times (i.e. $\rho_h(T, x) = \rho^n(x)$ for some $n$).  We denote the errors for a given mesh $h,k$ by $E_{h,k}^0, E_{h,k}^1$, respectively. 
        \begin{align*}
            E_{h,k}^0(T) &= \|\rho(T, \cdot) - \rho_h(T, \cdot)\|_{L^2}^2 +  \|\wt(T, \cdot) - \wt_h(T, \cdot)\|_{L^2}^2  +  \|u(T, \cdot) - u_h(T, \cdot)\|_{L^2}^2, \\
            E_{h,k}^1(T) &= \|\partial_x \rho(T, \cdot) - \partial_x \rho_h(T, \cdot) \|_{L^2}^2 +  \|\partial_x \wt(T, \cdot) - \partial_x \wt_h(T, \cdot) \|_{L^2}^2  \\
                &+  \| \partial_x u(T, \cdot) - \partial_x u_h(T, \cdot) \|_{L^2}^2
        \end{align*}
        To illustrate convergence of the numerical scheme in $H^1$, we perform the following test. 
        Fix $T = 0.5$.  Vary the spatial and temporal mesh size simultaneously, $h_i = \frac{1}{2^i}$ with $2 \leq i \leq 7$, $k_i = h_i/4$. 
        For each $(h,k)$, the errors $E_{h,k}^0(T)$ and $E_{h,k}^1(T)$ are computed and the loglog graph of the errors with respect to $h$ (with the understanding that $k = h/4$) is shown in table \ref{plots:vary_k_and_h}. 

        \begin{table}[!h]
            \begin{center}
                \begin{tabular}{cc}
                    \includegraphics[width=0.47\linewidth]{\detokenize{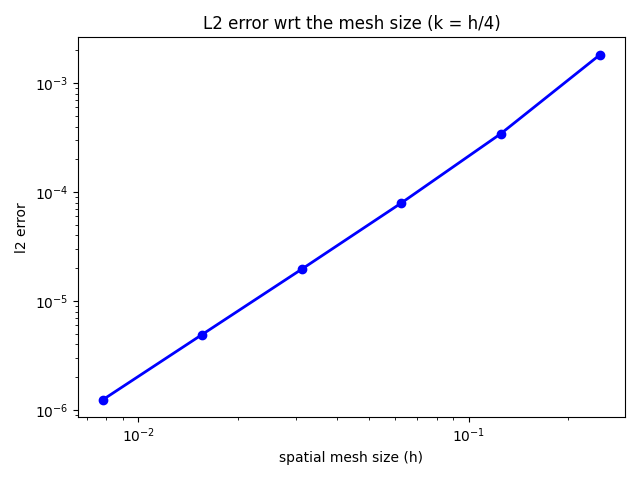}} &
                    \includegraphics[width=0.47\linewidth]{\detokenize{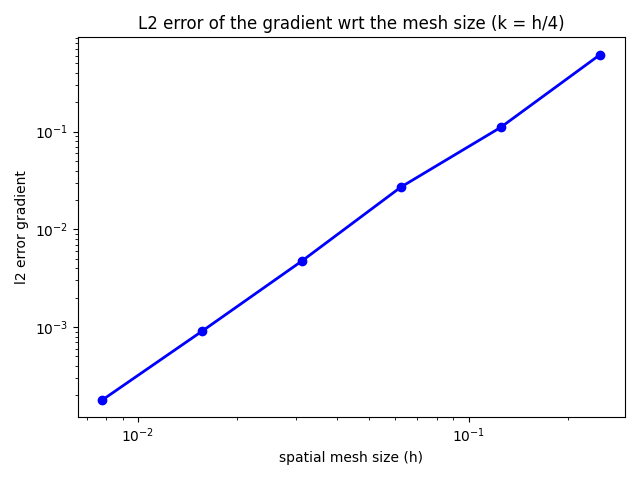}} 
                \end{tabular}
            \end{center} 
            \caption{Loglog plot of the $L^2$ error and $L^2$ error of the gradient with respect to $h$, where $k = h/4$, i.e. for fixed $T = 0.5$,  
            the mesh sizes are $(h,k) = (\frac{1}{4}, \frac{1}{16}), (\frac{1}{8}, \frac{1}{32}), (\frac{1}{16}, \frac{1}{64}), (\frac{1}{32}, \frac{1}{128}), (\frac{1}{64}, \frac{1}{256})$.}
            \label{plots:vary_k_and_h}
        \end{table}

        \begin{remark}
            \label{rmk:courant_condition}
            In the numerical experiments for \eqref{eqn:numerical_scheme_with_forcing}, a Courant-Friedrichs-Lewy condition for the mesh parameters $h,k$ was observed. 
            In some cases, when $k \approx h$, numerical instability was observed.  
            We found experimentally that small values of $k$ when compared to $h$ suppressed these instabilities and, for this particular experiment, $k = h/4$ sufficed. 
            However, the stability region is not known and it is possible that the stability region is more strict for smaller mesh sizes (for instance, $k \leq C h^2$). 
        \end{remark}

        \begin{remark}
            According to Table \ref{plots:vary_k_and_h}, it appears that the convergence rate of the $L^2$ error and the $L^2$ error of the gradient are similar (while one 
            would expect the convergence rate of the gradient to be slower).  However, the mesh sizes used here may not be fine enough to observe a clear convergence rate.  
            The purpose of this experiment is to illustrate that the error is approaching zero, regardless of the rate. 
        \end{remark}

\section{Conclusion and Future Work}
We have extended many important classical results about the Cucker-Smale model to the more versatile $\st$-model with adaptive strength and Favre averaging protocol \eqref{EAS_WM}. 
In order to gain versatility of behavior, it sacrificed the conservation of momentum and the energy law. 
Nonetheless, we showed that it still retains many of the desirable analytical qualities of the Cucker-Smale model-- namely
alignment, local well-posedness, a threshold condition 
for global well-posedness in 1D, existence for small data and uni-directional flocks, strong flocking, and relative entropy estimates on the limiting flock.

Although such results were obtained for the $\st$-model with Favre averaging, conceivably an extension to the general environmental averaging model in the sense of \cite{Sbook} is possible.  Indeed, with the appropriate assumptions on the 
averaging $[\u]_{\rho}$, one expects to obtain alignment results, local well-posedness, and a threshold condition 
for global well-posedness for uni-directional flocks.  However, the small data, strong flocking, and entropy estimates 
depends on the exponential decay of the derivatives of the Favre averaged velocity $\u_F$, which incidentally, depends on the algebraic structure 
of the Favre averaging.  Such results will therefore not be able to be easily extended to more general averaging 
operators $[\u]_{\rho}$ in the $\st$-model \eqref{s_model}.

While not as general as \eqref{s_model}, \eqref{EAS_WM} provides an important unification of the Cucker-Smale and Motsch-Tadmor models 
into one which retains many of the desirable analytic and qualitative features of both.
Unlike Cucker-Smale and Motsch-Tadmor, it was conceived at the hydrodynamic level and lacks a discrete and kinetic description. 
It may therefore be of general interest to put \eqref{EAS_WM} on more firm physical and theoretical grounds by researching its  
discrete and kinetic counterparts from which it arises.

\appendix
\section{Appendix}
\label{appendix}
The invariance and contractivity estimates on the map $\mathcal{F}$ for the local 
well-posedness argument in Section \ref{LWP} 
use the analyticity property of the heat semigroup and non-linear estimates on the derivatives, which we will 
record here.
For the non-linear estimates on derivatives, the Faa di Bruno Formula and the Gagliardo-Nirenburg inequality are used 
to obtain estimates on $\|[\u_1]_{\rho_1} - [\u_1]_{\rho_2} \|_{H^l}$ and $\| \u_F \|_{H^l}$, 
where $\u_F = (\u \rho)_{\phi} / \rho_{\phi}$ is the Favre-Filtration.  
Due to the presence of $\rho_{\phi}$ in the denominator of the Favre-filtration, 
an estimate on the Sobolev norm of the reciprocal is necessary.  
The estimate \eqref{recip_rho_estimate} on the Sobolev norm of $1/f$ 
is a specific case of Lemma 2.5 of \cite{lear_reynolds_shvydkoy2020local}, which 
estimates the fractional Sobolev norm for the purpose of showing local well-posedness of topological models. 
However, we record a simplified version here in order 
to highlight the dependence of the estimates on $\|1/\rho\|_{\infty}$ and to 
avoid the unnecessary dependence on $\nabla \rho$.  
We also record the Csisz\'{a}r-Kullback inequality used in the entropy estimates 
in Section \ref{sec:entropy}. 


\subsection{Classical Lemmas} 
We record the Gagliardo-Nirenberg inequality, Csisz\'{a}r-Kullback inequality, and analyticity property of 
the heat semi-group.

\begin{lemma} [Gagliardo-Nirenburg Inequality]
    \label{lma:GN_inequality}
    Assume the domain is $\T^n$ or $\R^n$. If $1 \leq q \leq \infty$, $0 \neq j < m$ integers, $1 \leq r \leq \infty$, $p \geq 1$ and 
    $0 \leq \theta < 1$ such that 
    \begin{equation*}
        \frac{1}{p} = \frac{j}{n} + \theta \Big(\frac{1}{r} - \frac{m}{n} \Big) + \frac{1-\theta}{q}, \hspace{5mm} \frac{j}{m} \leq \theta < 1
    \end{equation*}
    then there's a constant $C := C(j,m,n,q,r,\theta)$ such that 
    \begin{equation*}
        \|D^j f\|_{L^p} \leq C \|D^m f\|_{L^r}^{\theta} \|f\|_{L^{q}}^{1-\theta} 
    \end{equation*}
    for any $f \in L^q \cap L^2 \cap W^{m,r}$.
    For our purposes, we set $r = 2$ and place the smallest power on the $W^{m,r}(\R^n)$ norm, i.e. $\theta = j/m$. 
    We obtain 
    \begin{equation*}
        \|f\|_{W^{j,p}} \leq C \|f\|_{H^m}^{j/m} \|f\|_{L^q}^{1-j/m}
    \end{equation*}
    where 
    \begin{equation*}
        \frac{1}{p} = \frac{j}{2m} + \frac{1}{q} \Big(1 - \frac{j}{m} \Big)
    \end{equation*}   
\end{lemma}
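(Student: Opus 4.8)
The plan is to prove the interpolation inequality by a Littlewood--Paley decomposition, splitting $f$ into dyadic frequency blocks and balancing the low-frequency part against $\|f\|_{L^q}$ and the high-frequency part against $\|D^m f\|_{L^2}$. I would write $f = \sum_{k \geq -1} \Delta_k f$, where for $k \geq 0$ the block $\Delta_k f$ is localized to the frequency annulus $|\xi| \sim 2^k$ and $\Delta_{-1} f$ collects the low modes (on $\T^n$ this block contains the zero mode, which by finiteness of the measure is harmlessly dominated by $\|f\|_{L^q}$). The two workhorses are the Bernstein inequalities for a function $g$ with frequency support at scale $2^k$: the derivative-gain bound $\|D^j g\|_{L^p} \sim 2^{kj}\|g\|_{L^p}$, and the embedding bound $\|g\|_{L^b} \lesssim 2^{kn(1/a - 1/b)}\|g\|_{L^a}$, valid for $a \leq b$.

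First I would record the two one-sided block estimates. For a low block ($2^k \leq N$), combining the two Bernstein bounds (which requires $q \leq p$) gives $\|D^j \Delta_k f\|_{L^p} \lesssim 2^{k\alpha}\|f\|_{L^q}$ with $\alpha := j + n(\tfrac1q - \tfrac1p)$; for a high block ($2^k > N$) one has $\|D^j \Delta_k f\|_{L^p} \lesssim 2^{k\beta}\|D^m f\|_{L^2}$ with $\beta := j - m + n(\tfrac12 - \tfrac1p)$ (which requires $p \geq 2$). Summing the two geometric series, which converge precisely when $\alpha > 0$ and $\beta < 0$, yields for every threshold $N > 0$
\[
\|D^j f\|_{L^p} \lesssim N^{\alpha}\|f\|_{L^q} + N^{\beta}\|D^m f\|_{L^2},
\]
and optimizing the right-hand side over $N$ produces the bound $\|D^j f\|_{L^p} \lesssim \|f\|_{L^q}^{-\beta/(\alpha-\beta)}\,\|D^m f\|_{L^2}^{\alpha/(\alpha-\beta)}$.

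The crux is that the two exponents here coincide with $1-\theta$ and $\theta$ exactly because of the scaling hypothesis. A direct computation gives $\alpha(1-\theta) + \theta\beta = j - \theta m + (1-\theta)\tfrac{n}{q} + \theta\tfrac{n}{2} - \tfrac{n}{p}$, and setting this expression to zero is identical, after dividing by $n$, to the stated relation $\tfrac1p = \tfrac{j}{n} + \theta(\tfrac12 - \tfrac{m}{n}) + \tfrac{1-\theta}{q}$ in the case $r=2$. Hence $\tfrac{\alpha}{\alpha-\beta} = \theta$ and $\tfrac{-\beta}{\alpha-\beta} = 1-\theta$, so the optimized estimate reads $\|D^j f\|_{L^p} \lesssim \|D^m f\|_{L^2}^{\theta}\,\|f\|_{L^q}^{1-\theta}$. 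Applying this to each derivative order up to $j$, using $\|D^m f\|_{L^2} \leq \|f\|_{H^m}$, and specializing to $\theta = j/m$ gives the asserted form $\|f\|_{W^{j,p}} \lesssim \|f\|_{H^m}^{j/m}\|f\|_{L^q}^{1-j/m}$.

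I expect the main obstacle to be the sign and range bookkeeping rather than any deep analytic input: one must check that the scaling relation together with the constraint $j/m \leq \theta < 1$ genuinely forces $\alpha > 0 > \beta$ so that both geometric series converge, and one must dispatch the borderline configurations (where $\alpha$ or $\beta$ vanishes, or where $p < \max(q,2)$ and the Bernstein bounds run in the wrong direction) by the standard remedies, namely a logarithmic interpolation at the scale-critical endpoint and a duality argument to cover the complementary range of $(p,q)$. Since the applications in this paper require only $r=2$, $\theta = j/m$, and large $p$, the principal case above already suffices.
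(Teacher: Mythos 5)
The paper states this lemma as a classical fact and offers no proof, so the only question is whether your argument stands on its own. Your Littlewood--Paley scheme is the standard proof in the regime $p \geq \max(q,2)$ away from borderline exponents, and the bookkeeping there is right: the identity $\alpha(1-\theta)+\theta\beta=0$ is precisely the scaling relation with $r=2$, and the threshold optimization produces the powers $\theta$ and $1-\theta$. The genuine gap is your closing claim that this principal case "already suffices" for the paper. The one place the lemma is used (Proposition~\ref{recip_rho_estimate}) takes $q=\infty$ and $\frac{1}{p_i}=\frac{k_i}{2l}$, so $p$ is finite and $q>p$; your low-frequency block estimate then needs the Bernstein embedding $\|\Delta_k f\|_{L^p}\lesssim 2^{kn(1/q-1/p)}\|\Delta_k f\|_{L^q}$ run from $L^\infty$ into $L^p$ with a \emph{decaying} factor $2^{-kn/p}$, which is false: a wave packet $e^{i2^k x_1}\chi(x)$, $\chi$ a fixed bump, has unit $L^\infty$ norm while $\|\Delta_k(e^{i2^k x_1}\chi)\|_{L^p}\sim 1$, not $2^{-kn/p}$. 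The escape hatch you offer for this complementary range, ``a duality argument,'' does not apply either: the inequality is multiplicative in $f$, and there is no linear operator to dualize.

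The repair is classical but is a genuinely different argument from the one you wrote. Either run Nirenberg's integration-by-parts induction (for $j=1$, $m=2$ one has $\int |Df|^p \leq C\|f\|_\infty\int|D^2f|\,|Df|^{p-2}$, then induct on $m$), or make your threshold pointwise: from $|D^j\Delta_k f(x)|\lesssim 2^{kj}Mf(x)$ and $|D^j\Delta_k f(x)|\lesssim 2^{k(j-m)}M(D^m f)(x)$, where $M$ is the Hardy--Littlewood maximal operator, summing $\min$ of the two bounds over $k$ gives $|D^j f(x)|\lesssim (Mf(x))^{1-j/m}(MD^mf(x))^{j/m}$, and H\"older together with the maximal theorem (using $\|Mf\|_{L^\infty}\leq\|f\|_{L^\infty}$ and $r=2>1$) finishes the proof. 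Note that at $\theta=j/m$ the scaling relation collapses to the pure H\"older relation $\frac1p=\frac{1-\theta}{q}+\frac{\theta}{r}$, which is exactly why this pointwise route covers every $q$, including $q=\infty$; on $\T^n$ the zero mode causes no trouble since $j\geq 1$ kills it. As written, your proof does not establish the case of the lemma that the paper actually invokes.
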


\begin{lemma} [Csisz\'{a}r-Kullback Inequality]
    \label{lma:ck_inequality}
    The entropy is given by $\cH = \int_{\T} \rho \log \frac{\rho}{\bar{\rho}} \dx$ where $\bar{\rho} = \frac{1}{2\pi} \int_{\T} \rho(x) \dx$. 
    Then
    \begin{equation*}
        \frac{1}{4\pi} \|\rho - \bar{\rho}\|_{L^1}^2 \leq \bar{\rho} \cH \leq \|\rho - \bar{\rho}\|_{L^2}^2
    \end{equation*}
\end{lemma}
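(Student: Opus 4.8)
The plan is to recast the relative entropy through the convex function $h(s) = s\log s - s + 1$ and then derive the two bounds from pointwise comparisons of $h$. First I would use conservation of mass, $\int_\T(\rho - \bar{\rho})\dx = 0$, to check the identity
\begin{equation*}
    \cH = \bar{\rho}\int_\T h\Big(\frac{\rho}{\bar{\rho}}\Big)\dx, \qquad h(s) = s\log s - s + 1,
\end{equation*}
since the linear terms $-\rho + \bar{\rho}$ that $h$ introduces integrate to zero. Multiplying by $\bar{\rho}$ gives $\bar{\rho}\,\cH = \bar{\rho}^2\int_\T h(\rho/\bar{\rho})\dx$. Writing $s = \rho/\bar{\rho}$ and noting $\|\rho - \bar{\rho}\|_{L^2}^2 = \bar{\rho}^2\int_\T(s-1)^2\dx$ and $\|\rho - \bar{\rho}\|_{L^1} = \bar{\rho}\int_\T|s-1|\dx$, both inequalities reduce to comparing $\int_\T h(s)\dx$ against $\int_\T(s-1)^2\dx$ and $\big(\int_\T|s-1|\dx\big)^2$ respectively, with the prefactors of $\bar{\rho}$ cancelling.

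For the upper bound I would invoke the elementary inequality $h(s) \le (s-1)^2$ for $s > 0$, which follows from $\log s \le s - 1$: indeed $(s-1)^2 - h(s) = s\,(s - 1 - \log s) \ge 0$. Integrating in $x$ and multiplying by $\bar{\rho}^2$ yields $\bar{\rho}\,\cH \le \|\rho - \bar{\rho}\|_{L^2}^2$ at once.

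The lower bound is the Pinsker / Csisz\'{a}r--Kullback estimate and is the substantive part. I would normalize to the probability measure $\mu$ on $\T$ given by $\dmu = \dx/(2\pi)$ and set $f := \rho/\bar{\rho}$, so that $\int_\T f\dmu = 1$ and $\cH = 2\pi\bar{\rho}\int_\T f\log f\dmu$. The key analytic input is the pointwise bound
\begin{equation*}
    h(s) \ge \frac{3(s-1)^2}{2s+4}, \qquad s > 0,
\end{equation*}
where both sides vanish to second order at $s = 1$ and the inequality persists globally. Granting it, the Cauchy--Schwarz inequality gives
\begin{equation*}
    \Big(\int_\T |f-1|\dmu\Big)^2 \le \int_\T \frac{(f-1)^2}{2f+4}\dmu \cdot \int_\T (2f+4)\dmu \le \frac{1}{3}\Big(\int_\T h(f)\dmu\Big)\cdot 6,
\end{equation*}
using $\int_\T(2f+4)\dmu = 6$ from $\int_\T f\dmu = 1$. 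Since $\int_\T h(f)\dmu = \int_\T f\log f\dmu$ (the linear terms again drop out), this is precisely Pinsker's inequality $\int_\T f\log f\dmu \ge \tfrac12\big(\int_\T|f-1|\dmu\big)^2$. Unwinding the normalization via $\int_\T|f-1|\dmu = \|\rho-\bar{\rho}\|_{L^1}/(2\pi\bar{\rho})$ and $\int_\T f\log f\dmu = \cH/(2\pi\bar{\rho})$, then multiplying through by $\bar{\rho}$, produces $\bar{\rho}\,\cH \ge \frac{1}{4\pi}\|\rho - \bar{\rho}\|_{L^1}^2$.

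The main obstacle is this lower bound: a direct pointwise comparison of $h(s)$ with $|s-1|$ is impossible, so one is forced to introduce the weight $2s+4$ and pass through Cauchy--Schwarz, and then to track the $2\pi$ and $\bar{\rho}$ normalization constants carefully so that the probability-measure form of Pinsker's inequality returns exactly the stated constant $1/(4\pi)$.
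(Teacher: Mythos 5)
The paper offers no proof of this lemma at all---it is recorded in the appendix under ``Classical Lemmas'' as a known fact---so there is no paper argument to compare yours against; what matters is whether your proof stands on its own, and it does. Your argument is the standard Csisz\'{a}r--Kullback--Pinsker proof: the identity $\cH = \bar{\rho}\int_\T h(\rho/\bar{\rho})\dx$ with $h(s)=s\log s - s + 1$ uses mass conservation correctly; the upper bound follows from the pointwise inequality $h(s)\le (s-1)^2$, which you do verify; and the lower bound via weighted Cauchy--Schwarz with weight $2s+4$ is the classical route to Pinsker's inequality with constant $1/2$, and your bookkeeping of the normalization ($f=\rho/\bar{\rho}$, $\dmu=\dx/(2\pi)$) lands exactly on the stated constant $1/(4\pi)$. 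The one place you assert rather than prove is the key pointwise bound $h(s)\ge 3(s-1)^2/(2s+4)$; ``both sides vanish to second order and the inequality persists globally'' is not an argument. It is, however, a two-line verification: setting $g(s)=(2s+4)h(s)-3(s-1)^2$, one computes $g(1)=g'(1)=0$ and $g''(s)=4\big(\log s + \tfrac{1}{s} - 1\big)\ge 0$, the latter because $\psi(s)=\log s+\tfrac{1}{s}-1$ has $\psi'(s)=(s-1)/s^2$ and hence attains its minimum $\psi(1)=0$; so $g$ is convex with a double zero at $s=1$ and is therefore nonnegative on $(0,\infty)$. With that detail filled in, your proof is complete and correct.
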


\begin{lemma} [Analyticity Property of Heat Semigroup]
    For all $\epsilon, t > 0$, there exists a constant $C > 0$ independent of $f$, $\epsilon$, $t$ such that 
    \begin{equation*} 
        \|\partial e^{\epsilon t \Delta} f \|_p \leq \frac{C}{\sqrt{\epsilon t}} \|f\|_p  \hspace{10mm} 1 \leq p \leq \infty
    \end{equation*}
\end{lemma}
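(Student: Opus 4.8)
The plan is to represent the semigroup as convolution against the Gaussian heat kernel and then reduce the claim to an $L^1$ estimate on the kernel's derivative, which is obtained by a scaling computation. On $\R^n$ one has $e^{\epsilon t \Delta} f = G_{\epsilon t} \ast f$ with $G_s(x) = (4\pi s)^{-n/2} e^{-|x|^2/(4s)}$, and differentiating under the convolution gives $\partial e^{\epsilon t \Delta} f = (\partial G_{\epsilon t}) \ast f$. First I would invoke Young's convolution inequality, valid for every $1 \le p \le \infty$, to obtain
\[
\|\partial e^{\epsilon t \Delta} f\|_p \le \|\partial G_{\epsilon t}\|_{L^1} \, \|f\|_p ,
\]
so that the whole statement reduces to bounding $\|\partial G_s\|_{L^1(\R^n)}$ uniformly in the single parameter $s = \epsilon t$.

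For the kernel bound I would use the pointwise identity $\partial_{x_i} G_s(x) = -\tfrac{x_i}{2s}\, G_s(x)$ followed by the change of variables $x = \sqrt{s}\, y$. The prefactor $x_i/s$ contributes a factor $s^{-1/2}$, while the remaining integral $\int_{\R^n} |y_i| \, (4\pi)^{-n/2} e^{-|y|^2/4} \dy$ is a finite constant $C_0$ that does not depend on $s$. This yields $\|\partial G_s\|_{L^1(\R^n)} = C_0 \, s^{-1/2}$, which is precisely the asserted $C/\sqrt{\epsilon t}$ bound with $C$ independent of $f$, $\epsilon$, and $t$.

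To cover the torus I would transfer the estimate by periodization: writing the torus kernel as $G_s^{\T}(x) = \sum_{m \in \Z^n} G_s(x + 2\pi m)$ and combining the triangle inequality with the unfolding identity $\int_{\T^n} \sum_m |g(x + 2\pi m)| \dx = \int_{\R^n} |g(y)| \dy$, one gets $\|\partial G_s^{\T}\|_{L^1(\T^n)} \le \|\partial G_s\|_{L^1(\R^n)}$, so the full-space bound passes to $\T^n$ verbatim. Since the lemma is only ever applied on the torus in the preceding sections, that is exactly the version needed. The one point deserving attention is uniformity over \emph{all} $t>0$, not merely small $t$: both the scaling argument and the periodization avoid any smallness hypothesis on $s$, and for large $s$ the $s^{-1/2}$ decay is in fact generous, so no separate large-time analysis is required. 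This is a classical estimate, and I do not anticipate a genuine obstacle; the only mild technicalities are the scaling computation for the Gaussian derivative and verifying that periodization does not inflate the $L^1$ norm.
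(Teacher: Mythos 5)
Your proof is correct and is the standard argument: writing $e^{\epsilon t\Delta}f$ as convolution with the Gaussian kernel, applying Young's inequality, obtaining $\|\partial G_s\|_{L^1}=C_0 s^{-1/2}$ by the identity $\partial_{x_i}G_s=-\tfrac{x_i}{2s}G_s$ and scaling, and transferring to $\T^n$ by periodization of the kernel. The paper states this lemma as a classical fact without proof, so there is nothing to contrast with; your argument fills that gap correctly, and the periodization step is exactly what is needed since the lemma is applied on the torus.
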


\subsection{Non-linear estimates on derivatives}
\label{appdx:non-linear_estimates}
To establish the aforementioned non-linear estimates on derivatives, we first record the Fa di Bruno Formula.
For a more general version of \ref{recip_rho_estimate} on a fractional Sobolev space, we refer to Lemma 2.5 of 
\cite{lear_reynolds_shvydkoy2020local}.

\spnewtheorem*{formula}{Faa Di Bruno's Formula}{\bfseries}{\itshape}
\begin{formula}
    Let $f^{(i)}$ denote the $i^{th}$ partial derivative of $f$. 
    \begin{equation*}
        \partial^P h(g) = \sum_{\textbf{j}} \frac{P!}{j_1!1!^{j_1} j_2!2!^{j_2} \dots j_P!P!^{j_P}} h^{(j_1 + \dots + j_p)}(g) 
                                \prod_{i=1}^{|\textbf{j}|} g^{(k_i)}
    \end{equation*}
    where the sum is over all $P$-tuples of non-negative integers $\textbf{j} = (j_1, \dots, j_P)$ satisfying 
    \begin{equation*}
        1*j_1 + 2*j_2 + 3*j_3 + \dots + P*j_P = P
    \end{equation*}
    and 
    \begin{equation*}
        k_1 + k_2 + \dots + k_{|\textbf{j}|} = P 
    \end{equation*}
\end{formula}

\begin{proposition} [$H^l$ estimate on $1/f$]
\label{recip_rho_estimate}
    Assume the domain is $\T^n$ or $\R^n$.  For $f \in H^l$, there's a constant $C := C(l,n)$ such that 
    \begin{equation*}
        \Big\| \dl \Big( \frac{1}{f} \Big) \Big\|_2 
            \leq C \Big\| \frac{1}{f} \Big\|_{\infty}^{l+1} \| f \|_{H^l}
    \end{equation*}
    and
    \begin{equation*}
        \Big\| \dl \Big( \frac{\nabla f}{f} \Big) \Big\|_2 
            \leq C \Big( \|f\|_{H^{l+1}} \Big\|\frac{1}{f} \Big\|_{\infty} 
                    + \|\nabla f\|_{\infty} \Big\| \frac{1}{f} \Big\|_{\infty}^{l+1} \| f \|_{H^l} \Big) 
    \end{equation*}
    \begin{proof}
        Using $h(x) = \frac{1}{x}$ and $g(x) = f(x)$ in Faa di Bruno's formula and Holder's Inequality, we have 
        for some constant $C' := C'(l)$, which may change from line to line, 
        \begin{align*}
            \Big\| \dl \Big( \frac{1}{f} \Big) \Big\|_2
                &= \Big\| \sum_{\textbf{j}} \frac{l!}{j_1!1!^{j_1} j_2!2!^{j_2} \dots j_l!l!^{j_l}} \frac{(-1)^{j_1 + \dots + j_l}}{f^{j_1 + \dots + j_l + 1}} 
                            \prod_{i=1}^{|\textbf{j}|} f^{(k_i)} \Big\|_2 \\
                &\leq C' \Big\| \frac{1}{f} \Big\|_{\infty}^{l+1}  \prod_{i=1}^{|\textbf{j}|} \| (\partial^{k_i} f) \|_{L^{p_i}} \\
                &\leq C' \Big\| \frac{1}{f} \Big\|_{\infty}^{l+1}  \prod_{i=1}^{|\textbf{j}|} \| f \|_{W^{k_i, p_i}} \\
        \end{align*}
        where $\sum_{i=1}^{|\textbf{j}|} k_i = l$ and $\sum_{i=1}^{|\textbf{j}|} 1/p_i = 1/2$. 
        Choosing $\frac{1}{p_i} = \frac{k_i}{2l}$ and $q=\infty$ in the Gagliardo-Nirenberg 
        inequality \ref{lma:GN_inequality}, we obtain for some constant $C := C(l, n)$, which may change from line to line 
        \begin{equation*}
            \leq C \Big\| \frac{1}{f} \Big\|_{\infty}^{l+1} \prod_{i=1}^{|\textbf{j}|} \| f \|_{H^l}^{k_i/l} 
            = C \Big\| \frac{1}{f} \Big\|_{\infty}^{l+1} \| f \|_{H^l}\\            
        \end{equation*}
        Using this and the product estimate, we can estimate for $f \in H^{l+1}$,
        \begin{align*}
            \Big\| \dl \Big( \frac{\nabla f}{f} \Big) \Big\|_2 
                &\leq C \Big( \|f\|_{H^{l+1}} \Big\|\frac{1}{f} \Big\|_{\infty} + \|\nabla f\|_{\infty} \Big\|\frac{1}{f} \Big\|_{H^l} \Big) \\
                &\leq C \Big( \|f\|_{H^{l+1}} \Big\|\frac{1}{f} \Big\|_{\infty} + \|\nabla f\|_{\infty} \Big\| \frac{1}{f} \Big\|_{\infty}^{l+1} \| f \|_{H^l} \Big) \\
        \end{align*}
    \end{proof}    
\end{proposition}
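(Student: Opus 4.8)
The plan is to expand $\partial^l(1/f)$ by the higher chain rule and estimate the resulting terms one at a time. Writing $h(x) = 1/x$, so that $h^{(k)}(x) = (-1)^k k!\, x^{-(k+1)}$, Faa di Bruno's formula expresses $\partial^l(1/f)$ as a finite sum, over multi-indices $\mathbf{j} = (j_1,\dots,j_l)$ with $\sum_{i} i\, j_i = l$, of terms of the form
$$
\frac{c_{\mathbf{j}}}{f^{\,|\mathbf{j}|+1}} \prod_{i} \big(\partial^{i} f\big)^{j_i},
\qquad |\mathbf{j}| = j_1 + \cdots + j_l .
$$
Since the number of differentiated factors is $|\mathbf{j}| \le \sum_i i\, j_i = l$, every term carries at most $l$ derivative factors and a reciprocal power of $f$ no larger than $l+1$; this is the structural fact that will ultimately produce the exponent $l+1$ on $\|1/f\|_\infty$.

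First I would estimate a single Faa di Bruno term in $L^2$. Pulling the reciprocal factor out in $L^\infty$ gives $\|1/f\|_\infty^{|\mathbf{j}|+1}$ times the $L^2$ norm of the product of the derivative factors. To the latter I would apply the generalized H\"older inequality with exponents $p_i$ satisfying $\sum_i 1/p_i = 1/2$, reducing the product to $\prod_i \|\partial^{k_i} f\|_{L^{p_i}}$, where the $k_i$ are the orders of the derivatives appearing (counted with multiplicity, so that $\sum_i k_i = l$). Each factor is then controlled by the Gagliardo--Nirenberg inequality (Lemma \ref{lma:GN_inequality}) with the admissible choice $1/p_i = k_i/(2l)$, which is exactly compatible with the H\"older constraint since $\sum_i k_i/(2l) = 1/2$. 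This yields $\|\partial^{k_i} f\|_{L^{p_i}} \lesssim \|f\|_{H^l}^{k_i/l}$ (absorbing the bounded factor $\|f\|_\infty$, which in the application equals $\|\rho_\phi\|_\infty \le M\|\phi\|_\infty$, into the constant), and multiplying over $i$ collapses the powers to a single $\|f\|_{H^l}^{\sum_i k_i/l} = \|f\|_{H^l}$.

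Summing over the finitely many multi-indices $\mathbf{j}$ and bounding each reciprocal power $\|1/f\|_\infty^{|\mathbf{j}|+1}$ by $\|1/f\|_\infty^{l+1}$ then gives the first inequality. The second inequality I would deduce from the first: writing $\nabla f/f = (\nabla f)\cdot(1/f)$ and applying the standard product (tame) estimate $\|\partial^l(gh)\|_2 \lesssim \|g\|_{H^l}\|h\|_\infty + \|g\|_\infty \|h\|_{H^l}$ with $g = \nabla f$ and $h = 1/f$, one replaces $\|1/f\|_{H^l}$ by the bound just obtained and $\|\nabla f\|_{H^l}$ by $\|f\|_{H^{l+1}}$.

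The hard part will be the exponent bookkeeping in the middle step: one must check that the H\"older exponents dictated by the product structure of each Faa di Bruno term are simultaneously admissible in Gagliardo--Nirenberg, and that across all multi-indices the powers of $\|1/f\|_\infty$ and of $\|f\|_{H^l}$ can be taken uniformly equal to $l+1$ and $1$ respectively. The delicate point is that a naive use of $q=\infty$ in Gagliardo--Nirenberg leaves residual $\|f\|_\infty$ factors (with total power $|\mathbf{j}|-1$); these are harmless in the intended application, since $f = \rho_\phi$ is uniformly bounded by the conserved mass, but keeping the stated constant $C=C(l,n)$ clean requires absorbing them.
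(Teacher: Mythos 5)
Your proposal follows essentially the same route as the paper's proof: Faa di Bruno for $\partial^l(1/f)$, generalized H\"older with $\sum_i 1/p_i = 1/2$, Gagliardo--Nirenberg with the choice $1/p_i = k_i/(2l)$ and $q=\infty$ to collapse the product to $\|f\|_{H^l}$, and the tame product estimate for the $\nabla f/f$ bound. Your remark about the residual $\|f\|_{\infty}^{|\mathbf{j}|-1}$ factors from the $q=\infty$ endpoint is a point the paper silently drops, and you correctly note it is harmless in the application where $f=\rho_\phi$ is bounded by the conserved mass.
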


\begin{proposition} [$H^l$ Contractivity estimate on Favre Filtration]
\label{contractivity_estimate}
    Let $\u_F = (\u \rho)_{\phi} / \rho_{\phi}$. Then there exists a constant $C := C(l,n)$ such that 
    \begin{align*}
        \| [\u_1]_{\rho_1} - [\u_1]_{\rho_2} \|_{H^{l}} 
            &\leq \Big\| \frac{1}{{\rho_1}_{\phi} {\rho_2}_{\phi}} \Big\|_{\infty} 
                    \| (\u_1 \rho_1)_{\phi} ({\rho_2})_{\phi} - (\u_1 \rho_2)_{\phi} ({\rho_1})_{\phi} \|_{H^{l}} \\
                &+ C \Big\| \frac{1}{{\rho_1}_{\phi} {\rho_2}_{\phi}} \Big\|_{\infty}^{l+1} \| {\rho_1}_{\phi} {\rho_2}_{\phi} \|_{H^{l}}
                \| (\u_1 \rho_1)_{\phi} ({\rho_2})_{\phi} - (\u_1 \rho_2)_{\phi} ({\rho_1})_{\phi} \|_{\infty}
    \end{align*} 
    \begin{proof}
        Using the commutator estimate, we obtain
        \begin{align*}
        \| [\u_1]_{\rho_1} - [\u_1]_{\rho_2} \|_{H^l} 
            &= \Big\| \frac{ (\u_1 \rho_1)_{\phi} ({\rho_2})_{\phi} 
                - (\u_1 \rho_2)_{\phi} ({\rho_1})_{\phi}}    { {\rho_1}_{\phi} {\rho_2}_{\phi} } \Big\|_{H^l} \\ 
            &\leq \Big\| \frac{1}{{\rho_1}_{\phi} {\rho_2}_{\phi}} \Big\|_{\infty} 
                \| (\u_1 \rho_1)_{\phi} ({\rho_2})_{\phi} - (\u_1 \rho_2)_{\phi} ({\rho_1})_{\phi} \|_{H^l} \\
            &+ \Big\| \frac{1}{{\rho_1}_{\phi} {\rho_2}_{\phi}} \Big\|_{H^l} 
                \| (\u_1 \rho_1)_{\phi} ({\rho_2})_{\phi} - (\u_1 \rho_2)_{\phi} ({\rho_1})_{\phi} \|_{\infty} \\
        \end{align*}
        Then \ref{recip_rho_estimate} applied to $1/({\rho_1}_{\phi} {\rho_2}_{\phi})$ gives 
        \begin{equation*}
            \Big\| \frac{1}{{\rho_1}_{\phi} {\rho_2}_{\phi}} \Big\|_{H^l} 
                \leq C \Big\| \frac{1}{{\rho_1}_{\phi} {\rho_2}_{\phi}} \Big\|_{\infty}^{l+1} \| {\rho_1}_{\phi} {\rho_2}_{\phi} \|_{H^l}
        \end{equation*}
        and the desired inequality follows. 
    \end{proof}
\end{proposition}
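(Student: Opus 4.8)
The plan is to reduce the whole estimate to two ingredients that are already available: a Leibniz/Moser product estimate in $H^l$ and the reciprocal estimate of Proposition~\ref{recip_rho_estimate}. First I would put the two Favre quotients over a common denominator. Since $[\u_1]_{\rho_j} = (\u_1\rho_j)_\phi/(\rho_j)_\phi$, subtracting gives
\[
[\u_1]_{\rho_1} - [\u_1]_{\rho_2} = \frac{(\u_1\rho_1)_\phi (\rho_2)_\phi - (\u_1\rho_2)_\phi (\rho_1)_\phi}{(\rho_1)_\phi (\rho_2)_\phi}.
\]
Abbreviate the numerator by $N := (\u_1\rho_1)_\phi(\rho_2)_\phi - (\u_1\rho_2)_\phi(\rho_1)_\phi$ and the denominator by $D := (\rho_1)_\phi(\rho_2)_\phi$, so that the quantity to bound is the product $N\cdot(1/D)$.

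Next I would apply the commutator (Moser) product estimate $\|fg\|_{H^l}\le \|f\|_\infty\|g\|_{H^l} + C\|f\|_{H^l}\|g\|_\infty$ with $f = 1/D$ and $g = N$, which yields
\[
\|N/D\|_{H^l} \le \Big\|\tfrac{1}{D}\Big\|_\infty \|N\|_{H^l} + C\Big\|\tfrac{1}{D}\Big\|_{H^l}\|N\|_\infty.
\]
The first term is already exactly the leading term of the claimed bound. It then remains only to control $\|1/D\|_{H^l}$, and here I would invoke Proposition~\ref{recip_rho_estimate} directly with the single function $f = D$: it gives $\|1/D\|_{H^l}\le C\|1/D\|_\infty^{l+1}\|D\|_{H^l}$. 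Substituting this into the second term and recalling $D = (\rho_1)_\phi(\rho_2)_\phi$ reproduces precisely the stated inequality.

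I do not expect a genuine obstacle here; the argument is a one-line algebraic manipulation followed by two black-box estimates, mirroring the structure of the \emph{a priori} bound \ref{favre_estimate} but applied to the difference of two Favre fields. The only points requiring minor care are (i) that the product estimate is used in the full inhomogeneous $H^l$ norm, which is the standard Moser/Kato--Ponce estimate valid in the regularity range $l>n/2$ assumed throughout, and (ii) that Proposition~\ref{recip_rho_estimate}, although phrased for the top-order seminorm $\|\partial^l(1/f)\|_2$, upgrades to the full $H^l$ norm of $1/D$ since every intermediate derivative of $1/D$ is dominated by the same right-hand side. Because the conclusion is expressed intrinsically in terms of $\|1/D\|_\infty$ and $\|D\|_{H^l}$, no further unpacking of the convolutions $(\rho_1)_\phi,(\rho_2)_\phi$ is needed at this stage; their finiteness (and the lower bound making $\|1/D\|_\infty$ finite) is what assumption (A5) and the smoothing of $\phi$ guarantee when the estimate is later applied.
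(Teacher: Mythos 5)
Your proposal matches the paper's proof essentially line for line: both write the difference as $N/D$ with $D=(\rho_1)_\phi(\rho_2)_\phi$, apply the Moser product estimate to split into the $\|1/D\|_\infty\|N\|_{H^l}$ and $\|1/D\|_{H^l}\|N\|_\infty$ terms, and then bound $\|1/D\|_{H^l}$ via Proposition~\ref{recip_rho_estimate} applied to the single function $D$. The argument is correct and takes the same approach as the paper.
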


\begin{proposition} [$H^l$ estimate on Favre Filtration]
    \label{favre_estimate}
    Let $\u_F = (\u \rho)_{\phi} / \rho_{\phi}$. Then there's a constant $C := C(l,m)$
    \begin{equation*}
        \| \u_F \|_{H^l} 
        \leq \Big\| \frac{1}{\rho_{\phi}} \Big\|_{\infty} \|(\u \rho)_{\phi} \|_{H^l}
        + C \Big\| \frac{1}{\rho_{\phi}} \Big\|_{\infty}^{l+1} \| \rho_{\phi} \|_{H^l} \|(\u \rho)_{\phi} \|_{\infty} 
    \end{equation*}

    \begin{proof}
        Apply the commutator estimate and the Gagliardo-Nirenberg inequality as in \ref{contractivity_estimate}.
    \end{proof}
\end{proposition}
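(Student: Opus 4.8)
The plan is to specialize the argument behind the contractivity estimate, Proposition~\ref{contractivity_estimate}, to the single quotient $\u_F = (\u\rho)_\phi / \rho_\phi$. The structural observation is that $\u_F$ is a product of a factor that already lives in $H^l$, namely the numerator $(\u\rho)_\phi$, and the reciprocal $1/\rho_\phi$, whose Sobolev norm is precisely what Proposition~\ref{recip_rho_estimate} was designed to control. So the whole estimate reduces to a product rule followed by the reciprocal estimate, exactly as in the two-factor case already treated.

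Concretely, first I would write $\u_F = (\u\rho)_\phi \cdot \tfrac{1}{\rho_\phi}$ and apply the Kato--Ponce (Moser-type) product estimate — the same ``commutator estimate'' used throughout Section~\ref{LWP} — in the form
\begin{equation*}
    \|\u_F\|_{H^l} \leq \Big\|\frac{1}{\rho_\phi}\Big\|_\infty \|(\u\rho)_\phi\|_{H^l} + \Big\|\frac{1}{\rho_\phi}\Big\|_{H^l}\|(\u\rho)_\phi\|_\infty .
\end{equation*}
The first summand is already in the desired form, with coefficient $\|1/\rho_\phi\|_\infty$ and no extra constant. For the second summand I would feed $f=\rho_\phi$ into Proposition~\ref{recip_rho_estimate}, which bounds the reciprocal in $H^l$ by $C\|1/\rho_\phi\|_\infty^{l+1}\|\rho_\phi\|_{H^l}$ (the top-order piece $\|\dl(1/\rho_\phi)\|_2$ comes directly from the proposition, and the lower-order contributions to the inhomogeneous norm are absorbed into the same expression on the compact torus, where $1/\rho_\phi$ is uniformly bounded). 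Substituting this into the second summand produces exactly the claimed inequality.

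There is no genuine obstacle: the result is the one-factor analogue of Proposition~\ref{contractivity_estimate}, obtained by replacing the two-factor numerator there with the single factor $(\u\rho)_\phi$. The only point needing a little care is the passage between the homogeneous $\dl$ bound supplied by Proposition~\ref{recip_rho_estimate} and the inhomogeneous $H^l$ norm appearing on the right-hand side, which is routine on $\T^n$ precisely because the lower bound on $\rho_\phi$ from assumption (A5) (equivalently, the invariant ball $B_r(Z_0)$ in the fixed-point argument) keeps $\|1/\rho_\phi\|_\infty$ finite and makes every term in the chain meaningful.
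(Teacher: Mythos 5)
Your proposal is correct and matches the paper's (one-line) proof, which likewise writes $\u_F = (\u\rho)_\phi\cdot\tfrac{1}{\rho_\phi}$, applies the product/commutator estimate, and then invokes Proposition~\ref{recip_rho_estimate} exactly as in the proof of Proposition~\ref{contractivity_estimate}. The remark about reconciling the homogeneous $\dl$ bound with the inhomogeneous $H^l$ norm is a reasonable point of care but does not change the argument.
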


\clearpage

\end{document}